\definecolor{linkred}{rgb}{0.7,0.2,0.2}
\definecolor{linkblue}{rgb}{0,0.2,0.6}
\numberwithin{figure}{section}
\DeclareFontFamily{OMS}{rsfs}{\skewchar\font'60}
\DeclareFontShape{OMS}{rsfs}{m}{n}{<-5>rsfs5 <5-7>rsfs7 <7->rsfs10 }{}
\DeclareSymbolFont{rsfs}{OMS}{rsfs}{m}{n}
\DeclareSymbolFontAlphabet{\scr}{rsfs}
\DeclareSymbolFontAlphabet{\scr}{rsfs}
\DeclareFontFamily{U}{mathx}{\hyphenchar\font45}
\DeclareFontShape{U}{mathx}{m}{n}{
      <5> <6> <7> <8> <9> <10>
      <10.95> <12> <14.4> <17.28> <20.74> <24.88>
      mathx10
      }{}
\DeclareSymbolFont{mathx}{U}{mathx}{m}{n}
\DeclareMathAccent{\wcheck}{0}{mathx}{"71}
\DeclareMathOperator{\Aut}{Aut}
\DeclareMathOperator{\Pic}{Pic}
\DeclareMathOperator{\rank}{rank}
\DeclareMathOperator{\red}{red}
\DeclareMathOperator{\reg}{reg}
\DeclareMathOperator{\sing}{sing}
\newcommand{\sA}{\scr{A}}
\newcommand{\sB}{\scr{B}}
\newcommand{\sC}{\scr{C}}
\newcommand{\sD}{\scr{D}}
\newcommand{\sE}{\scr{E}}
\newcommand{\sF}{\scr{F}}
\newcommand{\sG}{\scr{G}}
\newcommand{\sH}{\scr{H}}
\newcommand{\sJ}{\scr{J}}
\newcommand{\sL}{\scr{L}}
\newcommand{\sM}{\scr{M}}
\newcommand{\sO}{\scr{O}}
\newcommand{\sQ}{\scr{Q}}
\newcommand{\sS}{\scr{S}}
\newcommand{\sT}{\scr{T}}
\newcommand{\sW}{\scr{W}}
\newcommand{\bA}{\mathbb{A}}
\newcommand{\bB}{\mathbb{B}}
\newcommand{\bC}{\mathbb{C}}
\newcommand{\bD}{\mathbb{D}}
\newcommand{\bE}{\mathbb{E}}
\newcommand{\bF}{\mathbb{F}}
\newcommand{\bG}{\mathbb{G}}
\newcommand{\bN}{\mathbb{N}}
\newcommand{\bP}{\mathbb{P}}
\newcommand{\bQ}{\mathbb{Q}}
\newcommand{\bR}{\mathbb{R}}
\newcommand{\bX}{\mathbb{X}}
\newcommand{\bZ}{\mathbb{Z}}
\theoremstyle{plain}
\newtheorem{thm}{Theorem}[section]
\newtheorem{cor}[thm]{Corollary}
\newtheorem{defn}[thm]{Definition}
\newtheorem{lem}[thm]{Lemma}
\newtheorem{prop}[thm]{Proposition}
\theoremstyle{remark}
\newtheorem{claim}[thm]{Claim}
\newtheorem{c-n-d}[thm]{Claim and Definition}
\newtheorem{notation}[thm]{Notation}
\newtheorem{rem}[thm]{Remark}
\newtheorem{question}[thm]{Question}
\newtheorem*{rem-nonumber}{Remark}
\numberwithin{equation}{thm}
\setlist[enumerate]{label=(\thethm.\arabic*), before={\setcounter{enumi}{\value{equation}}}, after={\setcounter{equation}{\value{enumi}}}}
\newcommand{\into}{\hookrightarrow}
\newcommand{\wtilde}{\widetilde}
\newcommand{\what}{\widehat}
\newcommand\CounterStep{\addtocounter{thm}{1}\setcounter{equation}{0}}
\newcommand{\factor}[2]{\left. \raise 2pt\hbox{$#1$} \right/\hskip -2pt\raise -2pt\hbox{$#2$}}
\newcommand{\Publication}[1]{}
\newcommand{\subversionInfo}{}
\newcommand{\svnid}[1]{}
\newcommand{\approvals}[2][Approval]{}
\renewcommand{\phi}{\varphi}
\tikzset{commutative diagrams/arrow style=tikz}
\DeclareMathOperator{\Alb}{Alb}
\DeclareMathOperator{\alb}{alb}
\DeclareMathOperator{\Div}{Div}
\DeclareMathOperator{\topsing}{sing,top}
\author{Daniel Greb}
\address{Daniel Greb, Essener Seminar für Algebraische Geometrie und Arithmetik, Fakultät für Ma\-the\-matik, Universität Duisburg--Essen, 45117 Essen, Germany}
\email{\href{mailto:daniel.greb@uni-due.de}{daniel.greb@uni-due.de}}
\urladdr{\href{https://www.esaga.uni-due.de/daniel.greb}{https://www.esaga.uni-due.de/daniel.greb}}
\author{Stefan Kebekus}
\address{Stefan Kebekus, Mathematisches Institut, Albert-Ludwigs-Universität Freiburg, Ernst-Zermelo-Straße 1, 79104 Freiburg im Breisgau, Germany}
\email{\href{mailto:stefan.kebekus@math.uni-freiburg.de}{stefan.kebekus@math.uni-freiburg.de}}
\urladdr{\href{https://cplx.vm.uni-freiburg.de}{https://cplx.vm.uni-freiburg.de}}
\author{Thomas Peternell}
\address{Thomas Peternell, Mathematisches Institut, Universität
  Bayreuth, 95440~Bayreuth, Germany}
\email{\href{mailto:thomas.peternell@uni-bayreuth.de}{thomas.peternell@uni-bayreuth.de}}
\urladdr{\href{http://www.komplexe-analysis.uni-bayreuth.de}{http://www.komplexe-analysis.uni-bayreuth.de}}
\thanks{Daniel Greb gratefully acknowledges support by the DFG-Research Training Group 2553 and the ANR-DFG project ``Quantization, Singularities and Holomorphic Dynamics''.  Stefan Kebekus gratefully acknowledges the support through a fellowship
  of the Freiburg Institute of Advanced Studies (FRIAS)}
\keywords{Miyaoka-Yau inequality, klt singularities, uniformisation, homeomorphisms}
\subjclass[2020]{32Q30, 32Q26, 14E20, 14E30}
\title[Miyaoka-Yau inequalities and topological characterization]{Miyaoka-Yau inequalities and the topological characterization of certain klt varieties}
\date{\today}
\begin{document}

\begin{abstract}
\selectlanguage{british}

Ball quotients, hyperelliptic varieties, and projective spaces
are characterized by their Chern classes, as the varieties where the Miyaoka-Yau
inequality becomes an equality.  Ball quotients, Abelian varieties, and projective
spaces are also characterized topologically: if a complex, projective manifold
$X$ is homeomorphic to a variety of this type, then $X$ is itself of this type.
In this paper, similar results are established for projective varieties with klt
singularities that are homeomorphic to singular ball quotients, quotients of
Abelian varieties, or projective spaces.

\end{abstract}
\approvals{Daniel & yes \\ Stefan & yes \\ Thomas & yes}

\maketitle
\tableofcontents

%
%
\svnid{$Id: 01-intro.tex 293 2023-10-06 11:11:22Z kebekus $}
\selectlanguage{british}

\section{Introduction}
\subversionInfo
\label{sec:1}

\subsection{The Miyaoka-Yau inequality for projective manifolds}
\approvals{Daniel & yes \\ Stefan & yes \\ Thomas & yes}

Let $X$ be an $n$-di\-men\-sional complex-projective manifold and let $D$ be any
divisor on $X$.  Recall that $X$ is said to ``satisfy the Miyaoka-Yau inequality
for $D$'' if the following Chern class inequality holds,
\[
  \big( 2(n+1)·c_2(X) - n·c_1(X)² \big)·[D]^{n-2} ≥ 0.
\]
It is a classic fact that $n$-dimensional projective manifolds $X$ whose
canonical bundles are ample or trivial satisfy Miyaoka-Yau inequalities.  In
case of equality, the universal covers are of particularly simple form.

\begin{thm}[Ball quotients and hyperelliptic varieties]\label{thm:1-1}%
  Let $X$ be an $n$-dimensional complex projective manifold.
  \begin{itemize}
    \item If $K_X$ is ample, then $X$ satisfies the Miyaoka-Yau inequality for
    $K_X$.  In case of equality, the universal cover of $X$ is the unit the ball
    $𝔹^n$.

    \item If $K_X$ is trivial and $D$ is any ample divisor, then $X$ satisfies
    the Miyaoka-Yau inequality for $D$.  In case of equality, the universal
    cover of $X$ is the affine space $ℂ^n$.  \qed
  \end{itemize}
\end{thm}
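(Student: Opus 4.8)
The plan is to produce in each case a canonical K\"ahler metric via the Aubin--Yau solution of the Calabi conjecture and then to read off both the inequality and its equality case from Chern--Weil theory applied to the curvature of that metric.

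For the first item, I would invoke the theorem of Aubin and Yau to equip $X$ with a K\"ahler--Einstein metric $\omega$, normalised so that $\operatorname{Ric}(\omega) = -\omega$. The Chern forms $c_1(X,\omega)$ and $c_2(X,\omega)$ attached to the Chern connection of $\omega$ represent $c_1(X)$ and $c_2(X)$, and the Einstein condition gives $-c_1(X,\omega) = \frac{1}{2\pi}\omega$, a positive form representing $[K_X]$ up to the positive factor $\frac{1}{2\pi}$. Hence $\bigl(2(n+1)\,c_2(X) - n\,c_1(X)^2\bigr)\cdot K_X^{n-2}$ is a positive multiple of $\int_X \bigl(2(n+1)\,c_2(X,\omega) - n\,c_1(X,\omega)^2\bigr)\wedge\omega^{n-2}$. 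A classical pointwise computation in normal coordinates rewrites the integrand as a universal positive constant times $|W_\omega|^2\,\omega^n$, where $W_\omega$ denotes the Bochner tensor of $\omega$ (the K\"ahler analogue of the Weyl tensor); since $|W_\omega|^2 \geq 0$ pointwise, the inequality follows. If equality holds, then $W_\omega \equiv 0$, so $\omega$ has constant holomorphic sectional curvature, which is negative because $\omega$ is Einstein with negative scalar curvature; the classification of complete, simply connected K\"ahler manifolds of constant negative holomorphic sectional curvature then identifies the universal cover of $X$ with $\bB^n$.

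For the second item, I would instead use Yau's theorem to produce a Ricci-flat K\"ahler metric $\omega$ in the K\"ahler class determined by the ample divisor $D$. Then $c_1(X,\omega) = 0$ as a form, and $[D]^{n-2}$ is represented by $\omega^{n-2}$ up to a positive constant, so the $c_1(X)^2$-term drops out and $\bigl(2(n+1)\,c_2(X) - n\,c_1(X)^2\bigr)\cdot [D]^{n-2}$ is a positive multiple of $\int_X c_2(X,\omega)\wedge\omega^{n-2}$. For a Ricci-flat K\"ahler metric the Chern--Weil integrand $c_2(X,\omega)\wedge\omega^{n-2}$ equals a universal positive constant times $|R_\omega|^2\,\omega^n$, where $R_\omega$ is the full curvature tensor, which gives the inequality. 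In case of equality $R_\omega \equiv 0$, so $\omega$ is flat, and the Bieberbach theorem identifies the universal cover of $X$ with $\bC^n$.

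The hard part will be the pointwise curvature inequality in the K\"ahler--Einstein case together with the precise description of its equality locus: one must verify, with the correct normalisation constants, that for a metric with $\operatorname{Ric} = -\omega$ the Miyaoka--Yau combination integrated against $\omega^{n-2}$ is the squared $L^2$-norm of the Bochner tensor up to a positive factor, and that the Bochner tensor vanishes exactly for the complex space forms. This is the \emph{analytic} core of the statement; everything else is Chern--Weil bookkeeping together with citations to Aubin--Yau, Bieberbach, and the classification of complex space forms.
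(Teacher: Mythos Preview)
The paper does not actually prove Theorem~\ref{thm:1-1}: it is stated as a classical background result, marked with \qed, and the reader is referred to \cite{GKT16} for a full discussion and references to the original literature (Aubin, Yau, Chen--Ogiue, etc.). So there is no ``paper's own proof'' to compare against.

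Your sketch is the standard argument and is correct in outline. A few small remarks. In the $K_X$ ample case, the identity you want is precisely the Chen--Ogiue formula: for a K\"ahler--Einstein metric the form $\bigl(2(n+1)\,c_2(X,\omega)-n\,c_1(X,\omega)^2\bigr)\wedge\omega^{n-2}$ equals a positive constant times $|R_0|^2\,\omega^n$, where $R_0$ is the traceless curvature, which for an Einstein metric coincides with the Bochner tensor; vanishing of $R_0$ together with the Einstein condition forces constant holomorphic sectional curvature. In the $K_X$ trivial case, note that you must choose the Ricci-flat metric $\omega$ in the class $c_1(D)$ (Yau's theorem lets you do this), so that $[D]^{n-2}$ is genuinely represented by a positive multiple of $\omega^{n-2}$; then Ricci-flatness makes $R_0=R$ and the same identity reduces to $c_2(X,\omega)\wedge\omega^{n-2}=\text{const}\cdot|R|^2\,\omega^n$. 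Finally, invoking ``Bieberbach'' in the flat case is slightly off-key: Bieberbach's theorems concern the structure of crystallographic groups, whereas what you actually use is that a complete flat K\"ahler manifold has universal cover $\bC^n$, which is immediate from the developing map (or Cartan--Ambrose--Hicks).
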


We refer the reader to \cite{GKT16} for a full discussion and references to the
original literature.

In the Fano case, where $-K_X$ is ample, the situation is more complicated, due
to the fact that the tangent bundle $𝒯_X$ and the canonical extension $ℰ_X$
need not be semistable\footnote{Recall that the canonical extension $ℰ_X$ is
defined as the middle term of the exact sequence $0 → 𝒪_X → ℰ_X → 𝒯_X → 0$
whose extension class equals $c_1(X) ∈ H¹\bigl(X,\, Ω¹_X\bigr)$.}.  If $ℰ_X$ is
semistable, then analogous results hold, see \cite[Theorem 1.3]{GKP22}, as well
as further references given there.

\begin{thm}[Projective space]\label{thm:1-2}%
  Let $X$ be an $n$-dimensional projective manifold.  If $-K_X$ is ample and if
  the canonical extension is semistable with respect to $-K_X$, then $X$
  satisfies the Miyaoka-Yau inequality for $-K_X$.  In case of equality, $X$ is
  isomorphic to the projective space $ℙ^n$.  \qed
\end{thm}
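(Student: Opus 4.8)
The plan is to obtain both assertions from the Bogomolov-Gieseker inequality applied to the canonical extension $\sE_X$ and from an analysis of its equality case. First I would read off from the defining sequence $0 \to \sO_X \to \sE_X \to \sT_X \to 0$ that $c_1(\sE_X) = c_1(X)$ and $c_2(\sE_X) = c_2(X)$, so that the discriminant $\Delta(\sE_X) := 2(n+1)\,c_2(\sE_X) - n\,c_1(\sE_X)^2$ is precisely the characteristic class $2(n+1)\,c_2(X) - n\,c_1(X)^2$ appearing in the Miyaoka-Yau inequality for $-K_X$. By hypothesis $\sE_X$ is semistable with respect to the ample class $-K_X$, so the Bogomolov-Gieseker inequality---valid in characteristic zero, for instance via Langer's results, or analytically via the existence of approximate Hermite-Einstein structures on semistable bundles---yields $\Delta(\sE_X)\cdot(-K_X)^{n-2} \geq 0$, which is the asserted inequality.

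For the equality case the central task is to upgrade semistability of $\sE_X$ to projective flatness. I would pass to a Jordan-H\"older filtration of $\sE_X$ with respect to $-K_X$; its graded pieces $\sG_i$ are $\mu$-stable of slope $\mu(\sE_X)$, so their reflexive hulls admit Hermite-Einstein metrics by the theorem of Bando-Siu. Since the discriminant is superadditive under extensions of sheaves of equal slope (a consequence of the Hodge index theorem), $0 = \Delta(\sE_X)\cdot(-K_X)^{n-2} \geq \sum_i \Delta(\sG_i)\cdot(-K_X)^{n-2} \geq 0$, whence every summand vanishes and, moreover, the cross terms vanish, forcing the classes $c_1(\sG_i)$ to be numerically proportional to $c_1(\sE_X)$. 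The equality case of the Bogomolov-L\"ubke inequality then forces each $\sG_i$ to be a line bundle $\sM_i$; numerical proportionality together with the torsion-freeness of $\Pic(X)$ (here $H_1(X,\bZ) = 0$ because $X$ is Fano) shows that $\sM_i \cong \sM$ for one fixed line bundle $\sM$, and since $H^1(X,\sO_X) = 0$ by Kodaira vanishing the successive extensions assembling $\sE_X$ out of the $\sG_i$ all split. Hence $\sE_X \cong \sM^{\oplus(n+1)}$.

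It remains to identify $X$. Taking first Chern classes gives $(n+1)\,c_1(\sM) = c_1(\sE_X) = -K_X$, and $\sM$ is ample because $-K_X$ is, so the Fano index of $X$ is at least $n+1$. The Kobayashi-Ochiai theorem bounds the index of an $n$-dimensional Fano manifold by $n+1$, with equality characterising $\bP^n$; therefore $X \cong \bP^n$. The details of this circle of ideas are carried out in \cite{GKP22}.

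The step I expect to be the main obstacle is the equality case of the Bogomolov-Gieseker inequality: passing from the vanishing of the single intersection number $\Delta(\sE_X)\cdot(-K_X)^{n-2}$ to projective flatness of $\sE_X$ requires the full strength of Hermite-Einstein theory for stable reflexive sheaves---Bando-Siu existence together with the sharp Bogomolov-L\"ubke inequality---and a careful treatment of the extensions occurring in the Jordan-H\"older filtration, in particular the argument that the stable graded pieces are forced to be line bundles.
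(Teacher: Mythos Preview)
The paper does not give its own proof of this theorem: the statement ends with a \qed\ and the surrounding text simply refers the reader to \cite[Theorem~1.3]{GKP22}.  Your sketch is exactly the line of argument carried out in that reference---Bogomolov--Gieseker for the rank-$(n+1)$ bundle $\sE_X$ yields the inequality, and in the equality case one shows that $\sE_X$ is projectively flat and then invokes Kobayashi--Ochiai---so there is nothing to compare on the level of overall strategy.

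One step in your equality argument is phrased imprecisely and, as written, does not follow.  You assert that ``the equality case of the Bogomolov--L\"ubke inequality then forces each $\sG_i$ to be a line bundle''.  What the equality case actually gives is that the Hermite--Einstein connection on $\sG_i^{**}$ is \emph{projectively flat}; this says nothing about the rank.  The missing ingredient is that $X$, being Fano, is simply connected, so a projectively flat bundle on $X$ is isomorphic to $L^{\oplus r}$ for a single line bundle $L$; since $\sG_i^{**}$ is stable it is simple, forcing $r=1$.  You should also say a word about why $\sG_i = \sG_i^{**}$: a priori the Jordan--H\"older graded pieces of the locally free sheaf $\sE_X$ are only torsion-free, and passing to the reflexive hull can change $c_2$.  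One way to avoid this bookkeeping altogether is to restrict to a general complete intersection surface via Mehta--Ramanathan, where semistability persists and the equality $\Delta(\sE_X|_S)=0$ directly yields projective flatness of the locally free sheaf $\sE_X|_S$; a Lefschetz argument then propagates this back to $X$.  Either route works, but the gap should be filled.
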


In each of the three settings, the equality cases are characterized
topologically: if $M$ is any projective manifold homeomorphic to a ball
quotient, a finite étale quotient of an Abelian variety or the projective space,
then $M$ itself is biholomorphic to a ball quotient, to a finite étale quotient
of an Abelian variety, or to the projective space.  For ball quotients, this is
a theorem of Siu \cite{MR584075}.  The torus case is due to Catanese
\cite{MR1945361}, whereas the Fano case is due to Hirzebruch-Kodaira
\cite{MR92195} and Yau \cite{MR480350}.

\subsection{Spaces with MMP singularities}
\approvals{Daniel & yes \\ Stefan & yes \\ Thomas & yes}

In general, it is rarely the case that the canonical bundle of a projective
variety has a definite "sign".  Minimal model theory offers a solution to this
problem, at the expense of introducing singularities.  It is therefore natural
to extend our study from projective manifolds to projective varieties with
Kawamata log terminal (= klt) singularities.  For klt varieties whose canonical
sheaves are ample, trivial or negative, analogues of Theorems~\ref{thm:1-1} and
\ref{thm:1-2} have been found in the last few years.  We refer the reader to
\cite[Thm.~1.5]{GKPT19} for a characterization of singular ball quotients among
projective varieties with klt singularities - see Definition \ref{def:2-1} for
the notion of singular ball quotients.  Characterizations of torus quotients and
quotients of the projective space can be found in \cite{LT18},
\cite[Thm.~1.2]{MR4263792} and \cite[Thm.~1.3]{GKP22}.  In each case, we find it
striking that the Chern class equalities imply that the underlying space has no
worse than quotient singularities.

\subsection{Main results of this paper}
\approvals{Daniel & yes \\ Stefan & yes \\ Thomas & yes}

This paper asks whether the topological characterizations of ball quotients,
Abelian varieties and the projective spaces have analogues in the klt settings.
Section~\ref{sec:2} establishes a topological characterization of singular ball
quotients.  The main result of this section, Theorem~\ref{thm:2-3}, can be seen
as a direct analogue of Siu's rigidity theorems.

\begin{thm}[Rigidity in the klt setting, see Theorem~\ref{thm:2-3}]\label{thm:1-3}%
  Let $X$ be a singular quotient of an irreducible bounded symmetric domain and
  let $M$ be a normal projective variety that is homeomorphic to $X$.  If $\dim
  X ≥ 2$, then, $M$ is biholomorphic or conjugate-biholomorphic to $X$.
\end{thm}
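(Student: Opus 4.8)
The plan is to reduce Theorem~\ref{thm:2-3} to the singular uniformisation theorem---the klt analogue of Theorem~\ref{thm:1-1}---together with strong rigidity of lattices. By Definition~\ref{def:2-1} we may write $X = Ω/Γ$, where $Ω$ is the given irreducible bounded symmetric domain and $Γ ⊆ \Aut(Ω)^\circ$ is a cocompact lattice acting properly discontinuously and freely in codimension one; let $φ \colon M → X$ denote the given homeomorphism. Then $X$ has quotient singularities---hence is klt, and a rational homology manifold---its canonical class is ample, and, since $Γ$ contains no pseudo-reflections, its smooth locus is $X_{\reg} = (Ω ∖ \mathrm{Fix}\,Γ)/Γ$ with $Ω ∖ \mathrm{Fix}\,Γ$ simply connected, so that $π_1(X_{\reg}) ≅ Γ$. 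As $M$ is normal, it too is a topological manifold away from a closed analytic set of complex codimension $≥ 2$; since $φ$ matches the loci where $M$ and $X$ fail to be topological manifolds, it restricts to a homeomorphism of their topological-manifold loci, and deleting the remaining (complex codimension $≥ 2$) singular points does not change the fundamental group, whence $π_1(M_{\reg}) ≅ π_1(X_{\reg}) ≅ Γ$. Being a rational homology manifold is a topological property, so $M$ is one as well.

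The crucial step---and the one I expect to be the main obstacle---is to upgrade this to the statement that $M$ has klt singularities with ample canonical class. A homeomorphism is a thin hypothesis here: it need not carry $M_{\sing}$ into $X_{\sing}$, and in dimension $≥ 3$ a normal variety may have singular points that are topological manifolds, so the local analytic structure of $X$ cannot simply be transported. I would approach this by combining the rational homology manifold property of $M$ with a detailed analysis of the local topology near $M_{\sing}$---which via $φ$ is that of a quotient singularity, or possibly of a smooth point, of $X$, the latter ``fake-smooth'' case being the delicate one---to show first that $M$ is $ℚ$-Gorenstein and klt. Ampleness of $K_M$ would then follow because $Γ = π_1(M_{\reg})$ is an irreducible lattice in a simple Lie group, hence has no infinite normal amenable subgroup and has virtual cohomological dimension $\dim_ℝ Ω$: running the minimal model programme on $M$ and invoking the known restrictions on the fundamental groups of the smooth loci of Fano-type and $K$-trivial klt varieties rules out every non-general-type outcome and forces $M$ to coincide with its own canonical model.

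Granting this, the next step is to exhibit $M$ itself as a singular quotient $Ω'/Γ'$ of a bounded symmetric domain by a cocompact lattice with $π_1(M_{\reg}) ≅ Γ'$. If $Ω$ is the complex ball (the rank-one case), I would transport the Miyaoka-Yau equality from $X$ to $M$: the first Pontryagin class $p_1 = c_1^2 - 2c_2$ is a homeomorphism invariant of rational homology manifolds and $φ$ identifies the first $ℚ$-Chern classes up to sign (both canonical classes being ample---a standard point), so $φ^{*}c_2(X) = c_2(M)$ and $φ^{*}(c_1(X)^2) = c_1(M)^2$, and hence the Chern number defining the Miyaoka-Yau expression of $M$ equals that of $X$, namely zero; since $M$ also satisfies the Miyaoka-Yau inequality for $K_M$, equality holds and \cite[Thm.~1.5]{GKPT19} yields $M ≅ Ω'/Γ'$. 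In the higher-rank cases the Miyaoka-Yau expression no longer detects the relevant equality, and I would instead use that $Γ$ is arithmetic and superrigid, together with harmonic-map and pluriharmonicity arguments in the tradition of Siu \cite{MR584075}, to build from the isomorphism $π_1(M_{\reg}) ≅ Γ$ a holomorphic uniformising map realising $M$ as a quotient of $Ω$.

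Finally, $Γ$ and $Γ' = π_1(M_{\reg})$ are abstractly isomorphic cocompact lattices, and since $\dim_ℂ Ω ≥ 2$ the ambient simple Lie groups have no factor locally isomorphic to $\mathrm{PSL}_2(ℝ)$; by Mostow-Prasad strong rigidity---and Margulis superrigidity in the higher-rank case---the isomorphism is induced by an isomorphism of Lie groups $\Aut(Ω)^\circ → \Aut(Ω')^\circ$ that is holomorphic or anti-holomorphic on the associated domains. This produces a $Γ$-equivariant biholomorphism or anti-biholomorphism $Ω' → Ω$, which descends to the quotient varieties; composed with the identifications $M ≅ Ω'/Γ'$ and $X ≅ Ω/Γ$ it yields a biholomorphism or conjugate-biholomorphism $M → X$. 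Apart from the klt-ness of $M$ in the second step, no genuine extension problem arises, since the uniformising map already lives on all of $M$.
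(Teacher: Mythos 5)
Your plan correctly identifies several of the ingredients the paper uses---the reduction of the statement to an abstract isomorphism $\theta\colon \pi_1(M_{\reg})\to\pi_1(X_{\reg})$ obtained from the matching of topological-manifold loci, and the final appeal to Mostow rigidity (the paper cites Eberlein's book \cite{MR1441541} precisely because $\Aut(Ω)$ need not be connected in higher rank). But there is a real gap, and it sits exactly where you flag it: you never establish that $M$ is klt with ample canonical class, nor that $M$ is itself a singular quotient of a bounded symmetric domain. Your sketch for the rank-one case \emph{presupposes} the first of these (transporting Miyaoka-Yau requires already knowing $K_M$ is ample and $M$ is smooth in codimension two), and your sketch for the higher-rank case (``harmonic-map and pluriharmonicity arguments in the tradition of Siu'') is not a proof on a singular $M$, where Siu's machinery does not directly apply. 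So as written, the middle step is missing, and the reduction to Mostow rigidity cannot be launched.

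The paper avoids this problem with a different reduction. Instead of trying to show directly that $M$ is klt Fano-free with ample $K_M$, it (i) forms the \emph{topological} fibre product $\what{M} := \what{X} \times_X M$ over a quasi-étale cover $\what{X}\to X$ with $\what{X}$ smooth and uniformised by $Ω$, (ii) proves (via Lemma~\ref{lem:2-6} plus extension theorems of Dethloff--Grauert and Stein) that $\what{M}$ admits a normal complex structure making $\tau_M\colon\what{M}\to M$ a holomorphic quasi-étale cover, and (iii) resolves $\what{M}$ and applies Siu's rigidity theorems \cite{MR584075,Siu81} to the \emph{resolution} of $\what{M}$, since $\what{X}$ is a smooth locally symmetric manifold. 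One then checks the resulting holomorphic map factors through the resolution and is in fact a biholomorphism $\what{M}\to\what{X}$, so that $\what{M}$ is automatically smooth and $M$ has quotient singularities, hence is klt. Nothing about the klt-ness or ampleness of $K_M$ needs to be assumed or established in advance; it is a byproduct of the cover construction. Only after this does the paper invoke Mostow rigidity, exactly to pass from the isomorphism $\what{M}\cong\what{X}$ to the isomorphism $M\cong X$, taking care of the disconnectedness of $\Aut(Ω)$ in higher rank.

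In summary: your endgame and several supporting observations match the paper, but your route through ``$M$ is klt with ample $K_M$, then a singular uniformisation theorem'' has a genuine and acknowledged hole. The decisive idea you are missing is to lift the homeomorphism to a quasi-étale cover by a \emph{topological} construction, endow that cover with a complex structure, and run Siu's theorem on a resolution of the cover, thereby reducing to the smooth case without ever needing to classify the singularities of $M$ beforehand.
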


Using somewhat different methods, Section~\ref{sec:3} generalizes Catanese's
result to the klt setting.

\begin{thm}[Varieties homeomorphic to torus quotients, see Theorem~\ref{thm:3-3}]%
  Let $M$ be a compact complex space with klt singularities.  Assume that $M$ is
  bimeromorphic to a Kähler manifold.  If $M$ is homeomorphic to a singular
  torus quotient, then $M$ is a singular torus quotient.
\end{thm}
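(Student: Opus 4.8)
The goal is to realise $M$ as the quotient of a complex torus by a finite group acting freely in codimension one; since $M$ has klt singularities, this is exactly the assertion that $M$ is a singular torus quotient. Write the given target as $X=T/G$, with $T$ a complex torus of dimension $n$ and $G$ finite acting freely in codimension one.

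\textbf{Step 1 (passing to the regular locus).} A homeomorphism $\phi\colon M\to X$ carries the locus of topological–manifold points of $M$ onto that of $X$. For a singular torus quotient of dimension $\ge 2$ this locus is the smooth locus: since $G$ contains no pseudo-reflections, every singular point of $X$ is a genuine quotient singularity $\bC^{k}/H$ with $H$ finite nontrivial and $k\ge 2$, which is never a manifold point because its link is the non-simply-connected spherical space form $S^{2k-1}/H$. Since $M$ is klt, hence normal, $M_{\mathrm{sing}}$ has complex codimension $\ge 2$, so the topological–manifold locus of $M$ differs from $M_{\mathrm{reg}}$ only along a set of real codimension $\ge 4$; on $X$ that locus equals $X_{\mathrm{reg}}$. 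Consequently $\phi$ induces an isomorphism $\pi_1(M_{\mathrm{reg}})\cong\pi_1(X_{\mathrm{reg}})$.

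\textbf{Step 2 (the torus cover).} As $T\to X$ is a Galois quasi-étale cover branched only in codimension $\ge 2$, there is an exact sequence $1\to\pi_1(T)\to\pi_1(X_{\mathrm{reg}})\to G\to 1$ with $\pi_1(T)\cong\bZ^{2n}$, so $\pi_1(X_{\mathrm{reg}})$ is virtually abelian and, by the Bieberbach theory of such groups, contains a canonical finite-index characteristic subgroup $N\cong\bZ^{2n}$ (its translation lattice), corresponding to a quasi-étale cover $T'\to X$ by a complex torus. Transporting $N$ along $\phi$ and normalising $M$ in the associated cover of $M_{\mathrm{reg}}$ yields a finite quasi-étale cover $p\colon\widetilde M\to M$ with Galois group $Q:=\pi_1(M_{\mathrm{reg}})/N$. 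Then $\widetilde M$ is klt and bimeromorphic to a Kähler manifold (both properties being stable under finite quasi-étale covers), it is homeomorphic to $T'$ (the construction being functorial under $\phi$), and $\pi_1(\widetilde M)\cong\bZ^{2n}$, so $b_1(\widetilde M)=2n=2\dim\widetilde M$. It now suffices to prove that $\widetilde M$ is biholomorphic to a complex torus: then $M=\widetilde M/Q$, with $Q$ acting freely in codimension one and $M$ klt, is a singular torus quotient.

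\textbf{Step 3 ($\widetilde M$ is a torus).} By the Hodge theory of klt spaces in Fujiki class $\mathcal{C}$ one has $b_1(\widetilde M)=2q(\widetilde M)$, so the Albanese torus $A=\Alb(\widetilde M)$ has dimension $n$. The pullback $\alpha^{*}\colon H^{0}(A,\Omega^{1}_{A})\to H^{0}(\widetilde M,\Omega^{[1]}_{\widetilde M})$ is an isomorphism of $n$-dimensional spaces; since linearly independent reflexive $1$-forms cannot be pointwise dependent on a dense open set, the Albanese morphism $\alpha\colon\widetilde M\to A$ is generically immersive on $\widetilde M_{\mathrm{reg}}$, hence — as $\dim A=n=\dim\widetilde M$ — generically finite and surjective. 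Taking determinants in $\alpha^{*}\Omega^{1}_{A}\to\Omega^{[1]}_{\widetilde M}$ gives $K_{\widetilde M}=\alpha^{*}K_{A}+R$ with $R\ge 0$ effective, supported on the ramification. The decisive point is that $R=0$, i.e. that $\alpha$ is quasi-étale: granting this, purity of the branch locus makes $\alpha$ étale, forces $\widetilde M$ to be smooth, and realises $\widetilde M$ as a finite étale cover of the torus $A$, hence a complex torus. To obtain $R=0$ one feeds the topological input into the structure theory: $\widetilde M$, being homeomorphic to $T'$, is a closed topological manifold whose Betti numbers, Euler characteristic, and (by Novikov's theorem) rational Pontryagin classes coincide with those of a torus; combining the resulting Chern-number identities with the Miyaoka–Yau inequality and the numerical characterisation of torus quotients (\cite{MR4263792}, \cite{LT18}; compare \cite[Thm.~1.3]{GKP22}), after passing to a minimal model, and using $\pi_1(\widetilde M)=\bZ^{2n}$ together with $b_1(\widetilde M)=2\dim\widetilde M$, excludes any ramification. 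Equivalently, one shows $\kappa(\widetilde M)=0$; Kawamata's theorem then makes $\alpha$ a surjection with connected fibres, so the generically finite $\alpha$ is birational, and a birational morphism from a klt space onto the torus $A$ contracts no divisor — $A$ carries no rational curves — hence is small, hence an isomorphism since $A$ is $\bQ$-factorial. In either case $\widetilde M\cong A$, and Step 2 finishes the proof.

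\textbf{Main obstacle.} The hard part is exactly the vanishing of $R$, equivalently $\kappa(\widetilde M)=0$, equivalently that the Albanese map is étale. The fundamental group alone does not see it — there are ramified covers of abelian varieties with $b_1=2\dim$ — so one must use the homeomorphism in full strength, extract from it honest numerical (Chern- and Pontryagin-class) information on the singular klt space $\widetilde M$, and run the Miyaoka–Yau/minimal-model machinery for klt spaces in Fujiki class $\mathcal{C}$. A secondary but genuinely needed ingredient is the topological fact of Step 1 that klt, i.e. here quotient, singularities are never topological–manifold points in dimension $\ge 2$, which is what lets one replace $\pi_1(M)$ by $\pi_1(M_{\mathrm{reg}})$.
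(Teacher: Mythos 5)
Your Steps 1 and 2 match the paper's reduction: use that quotient singularities are not topological manifold points to identify the smooth loci, then construct a quasi-étale cover $\widetilde M \to M$ with $\widetilde M$ homeomorphic to a torus (the paper does this via a topological fibre product together with extension theorems of Dethloff–Grauert and Stein to put a normal complex structure on it). At that stage the paper invokes a clean standalone statement (its Proposition~3.5): a compact complex space with rational singularities, bimeromorphic to a Kähler manifold and \emph{homotopy equivalent} to a compact torus, is itself a complex torus.

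Your Step~3 is where there is a genuine gap. You correctly set up the Albanese map $\alpha\colon\widetilde M\to A$, observe it is generically finite of degree at least one, and recognise that everything hinges on showing $\alpha$ is unramified (equivalently $R=0$, equivalently $\kappa(\widetilde M)=0$). But you do not prove this; you gesture at combining Novikov invariance of Pontryagin classes, the Miyaoka–Yau inequality, and the Chern-class characterisation of torus quotients, and you yourself flag it as the ``main obstacle''. None of that machinery is actually needed, and the argument as written does not close. The point you are missing is that you have far more than $\pi_1(\widetilde M)\cong\bZ^{2n}$: a homeomorphism to a torus gives a \emph{ring} isomorphism $H^*(\widetilde M,\bZ)\cong\Lambda^* H^1(\widetilde M,\bZ)$. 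Rational singularities (klt $\Rightarrow$ rational) plus the exponential sequence give that $\pi^*\colon H^1(M,\bZ)\to H^1(\widetilde M_{\mathrm{res}},\bZ)$ is an isomorphism for a resolution, and Reid's result makes the Albanese of the resolution factor through $\widetilde M$, so $\alpha^*\colon H^1(A,\bZ)\to H^1(\widetilde M,\bZ)$ is an isomorphism. Since both $H^*(A,\bZ)$ and $H^*(\widetilde M,\bZ)$ are exterior algebras on their degree-one parts, $\alpha^*$ is then an isomorphism in \emph{every} degree — in particular in top degree, so $\deg\alpha=1$ and $\alpha$ is birational. A comparison of $b_2$ then forbids $\alpha$ from contracting anything, hence $\alpha$ is an isomorphism. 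This is elementary and avoids Chern numbers, Pontryagin classes, Miyaoka–Yau, and minimal models altogether. Your remark that ``the fundamental group alone does not see it'' is true but beside the point: the full cohomology ring does, and that is what the homeomorphism supplies. As it stands your Step~3 is an outline of an attack rather than a proof, and the claimed reduction to $\kappa(\widetilde M)=0$ via Chern-class identities would need substantial new arguments to be made rigorous.
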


In both cases, we find that certain Chern classes equalities are
invariant under homeomorphisms.
  
Varieties homeomorphic to projective spaces are harder to investigate.
Section~\ref{sec:4} gives a full topological characterization of $ℙ³$, but
cannot fully solve the characterization problem in higher dimensions.

\begin{thm}[Topological $ℙ³$, see Theorem~\ref{thm:4-20}]%
  Let $X$ be a projective klt variety that is homeomorphic to $ℙ³$.  Then, $X ≅
  ℙ³$.
\end{thm}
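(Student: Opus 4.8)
The plan is to show that $X$ is a klt Fano threefold whose anticanonical divisor is $(\dim X+1)$ times an ample generator of $\Pic(X)$, and then to finish by the Kobayashi–Ochiai theorem for klt Fano varieties, which identifies such varieties with $ℙ^3$.

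\emph{Step 1: topological and coherent input.}  Since $X$ is homeomorphic to $ℙ^3$, it is a closed oriented topological $6$-manifold with $\pi_1(X)=1$ and $H^\bullet(X,ℤ)≅H^\bullet(ℙ^3,ℤ)$; let $h∈H^2(X,ℤ)$ be the ample generator, normalised by $h^3=1$.  By Novikov's theorem the rational Pontryagin classes are homeomorphism invariants, so $p_1(X)=p_1(ℙ^3)=4h^2$.  Being a topological manifold, $X$ is a $ℚ$-homology manifold, hence $ℚ$-factorial, and $b_1(X)=0$ forces $\rho(X)=b_2(X)=1$; thus $-K_X≡\lambda h$ for some $\lambda∈ℚ$.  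Moreover, the link of every point of $X$ is homeomorphic to $S^5$, so the local class groups of $X$ vanish; hence $X$ is (locally factorial, in particular) Gorenstein, $-K_X$ is Cartier, and $\lambda∈ℤ$.  Finally, klt singularities are Du Bois, and the cohomology of a projective $ℚ$-homology manifold is pure; together with $H^\bullet(X,ℂ)≅H^\bullet(ℙ^3,ℂ)$ this yields $H^i(X,𝒪_X)=\operatorname{Gr}^0_F H^i(X,ℂ)=0$ for all $i>0$, and so $\chi(X,𝒪_X)=1$.

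\emph{Step 2: $X$ is Fano with $-K_X=4h$.}  Because the singularities of $X$ are Gorenstein and Du Bois, Hirzebruch–Riemann–Roch applies on $X$ in the usual form; for a threefold it reads $\chi(X,𝒪_X)=\tfrac{1}{48}\bigl(c_1(X)^3-c_1(X)·p_1(X)\bigr)$ with $c_1(X)=-K_X≡\lambda h$.  Substituting $p_1(X)=4h^2$, $h^3=1$ and $\chi(X,𝒪_X)=1$ gives $\lambda^3-4\lambda-48=0$, which factors as $(\lambda-4)(\lambda^2+4\lambda+12)=0$ and hence has $\lambda=4$ as its only real root.  In particular $\lambda>0$, so $-K_X≡4h$ is ample: $X$ is a klt Fano threefold.

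\emph{Step 3: conclusion.}  By Step 1 we have $H^1(X,𝒪_X)=H^2(X,𝒪_X)=0$, so the exponential sequence identifies $\Pic(X)\xrightarrow{\ \sim\ }H^2(X,ℤ)=ℤh$; thus $h$ is an ample Cartier divisor generating $\Pic(X)$ and $-K_X=4h=(\dim X+1)\,h$.  By the Kobayashi–Ochiai theorem for klt Fano varieties — a klt Fano $n$-fold whose anticanonical divisor is $(n+1)$ times an ample Cartier divisor is isomorphic to $ℙ^n$ (see, e.g., \cite{MR4263792}) — we conclude $X≅ℙ^3$.  (Alternatively, once one knows independently that $X$ is smooth, Theorem~\ref{thm:1-2} or the classical results of \cite{MR92195,MR480350} apply directly.)

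I expect the main obstacle to be Step 2: justifying Hirzebruch–Riemann–Roch, with no correction terms, on the singular space $X$, and identifying the Pontryagin class appearing there with the topological invariant $p_1(X)$.  Both points rest on understanding which klt singularities can occur on a topological manifold, and it is precisely here that dimension three is essential — there are no exotic $5$-spheres, and the classification of canonical threefold singularities controls the admissible local models.  In dimension $≥4$ this control is lost, which is why the topological characterisation of $ℙ^n$ remains open in general.
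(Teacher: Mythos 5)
Your proposal follows essentially the same route as the paper: establish that $X$ is Gorenstein with $\chi(\mathcal{O}_X)=1$, use Novikov's topological invariance of $p_1$ to pin down $p_1(X)=4h^2$, plug into Riemann--Roch for Gorenstein rational threefolds to get the cubic $\lambda^3-4\lambda-48=0$ (equivalently the paper's $r(r^2-4)=-48$ with $\lambda=-r$), read off $-K_X=4h$ as the only real root, and conclude by the singular Kobayashi--Ochiai theorem (Fujita's \cite{MR946238} in the paper). The core computation is identical.

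The differences are in the preliminaries, and one of them deserves a flag. You derive ``local class groups vanish, hence $X$ is Gorenstein'' from ``the link of every point is $S^5$''; this argument in fact requires knowing that the singular locus has codimension at least three (so that each singular point is isolated with a genuine link), and you never establish this. The paper does it as Item~\ref{il:4-4-3} of Proposition~\ref{prop:4-4}: klt varieties have quotient singularities in codimension two, quotient singularities have non-trivial local fundamental groups, and so cannot be topologically smooth points --- thus $X_{\sing}$ has codimension $\geq 3$, which for a threefold means isolated singularities. Once that is in place, your link argument (rationality gives $H^1(U,\mathcal{O}_U)=0$ by depth, so $\Pic(U)\hookrightarrow H^2(L,\mathbb{Z})=0$) is sound and is a reasonable local variant of the paper's global exponential-sequence/Scheja argument (Item~\ref{il:4-4-5}). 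You also derive $H^i(X,\mathcal{O}_X)=0$ from Du Bois plus purity of a $\mathbb{Q}$-homology manifold, whereas the paper uses Kollár's surjectivity of $H^q(X,\mathbb{C})\to H^q(X,\mathcal{O}_X)$ for rational singularities; both are fine. With the codimension-two smoothness supplied, your proof is correct and matches the paper's in substance.
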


However, we present some partial results that severely restrict the geometry of
potential exotic varieties homeomorphic to $ℙ^n$.  These allow us to show the
following.

\begin{thm}[$ℚ$-Fanos in dimension 4 and 5, see Theorem~\ref{thm:4-21}]%
  Let $X$ be a projective klt variety that is homeomorphic to $ℙ^n$ with $n = 4$
  or $n = 5$.  Then, $X ≅ ℙ^n$, unless $K_X$ is ample.
\end{thm}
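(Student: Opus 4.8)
The plan is to reduce the assertion to a short computation with the Miyaoka--Yau expression of Theorem~\ref{thm:1-2} and its klt counterpart, the input being the constraints that a homeomorphism with $\bP^n$ forces on a klt variety.

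First I would record the topological data. Being homeomorphic to $\bP^n$, $X$ is a rational homology manifold with $\pi_1(X)=1$ and $H^*(X;\bZ)\cong\bZ[h]/(h^{n+1})$, where $h^n\cdot[X]=1$. Since $X$ is klt it has rational singularities, and the Hodge-theoretic properties of klt spaces give $H^1(X,\sO_X)=H^2(X,\sO_X)=0$; the exponential sequence then identifies $\Pic(X)$ with $H^2(X;\bZ)=\bZ$, so $\rho(X)=1$ and the ample generator is a line bundle $L$ with class $h$ and $L^n=1$. Because $\rho(X)=1$, exactly one of the following holds: $K_X$ is ample, $K_X\sim_\bQ 0$, or $-K_X$ is ample. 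The first is the exception allowed in the statement, so there is nothing to prove in that case; this exception is genuine, since the Miyaoka--Yau method used below --- which seems unavoidable for $n\ge 4$ --- cannot exclude klt varieties with $K_X$ ample that are homeomorphic to $\bP^n$ (unlike the case $n=3$ of Theorem~\ref{thm:4-20}).

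The main case is $-K_X$ ample, i.e.\ $X$ a klt $\bQ$-Fano with $\rho(X)=1$ and $-K_X\equiv\iota\,h$ for some $\iota\in\bQ_{>0}$. Here I would use that the first rational Pontryagin class $p_1(X)\in H^4(X;\bQ)=\bQ\cdot h^2$ is a homeomorphism invariant --- Novikov's theorem, in the version valid for rational homology manifolds --- so $p_1(X)=p_1(\bP^n)=(n+1)\,h^2$. Combined with the identification of $p_1(X)$ with the algebraically defined class $\widehat c_1(X)^2-2\widehat c_2(X)$ attached to the $\bQ$-Chern classes of the klt variety $X$, and with $\widehat c_1(X)=-K_X\equiv\iota\,h$, this yields $\widehat c_2(X)=\tfrac12\bigl(\iota^2-(n+1)\bigr)h^2$ and hence, using $h^n\cdot[X]=1$,
\[
  \bigl(2(n+1)\,\widehat c_2(X)-n\,\widehat c_1(X)^2\bigr)\cdot(-K_X)^{n-2}=\bigl(\iota^2-(n+1)^2\bigr)\,\iota^{\,n-2}.
\]
If the canonical extension $\sE_X$ is semistable with respect to $-K_X$, then the klt analogue of Theorem~\ref{thm:1-2} gives that the left-hand side is $\ge 0$, so $\iota\ge n+1$; the Kobayashi--Ochiai bound for the index of a klt $\bQ$-Fano gives $\iota\le n+1$; hence $\iota=n+1$, the Miyaoka--Yau inequality is an equality, and the equality case exhibits $X$ as a finite quotient of $\bP^n$, which by $\pi_1(X)=1$ is $\bP^n$ itself. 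The same identity also settles the case $K_X\sim_\bQ 0$: there $\iota=0$, so $\widehat c_2(X)\cdot h^{n-2}=-\tfrac{n+1}{2}<0$, contradicting Miyaoka's inequality for klt varieties with nef canonical class.

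It remains to handle the case where $\sE_X$ is \emph{not} semistable with respect to $-K_X$. Here I would feed a destabilising subsheaf of $\sE_X$, together with the foliation or rational map it produces, into the structural results of Section~\ref{sec:4}: in dimension $n\le 5$ these restrict the geometry of $X$ enough to contradict the homeomorphism with $\bP^n$, whereas for $n\ge 6$ they do not, which is exactly why the statement is confined to $n=4,5$. I expect this instability case, together with the comparison between the topological class $p_1(X)$ and the $\bQ$-Chern class $\widehat c_2(X)$, to be the genuinely hard parts of the proof; the index computation above is then little more than bookkeeping.
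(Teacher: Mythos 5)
Your bookkeeping with the Pontryagin class matches the paper's Corollary~\ref{cor:4-11}: Novikov's theorem plus $p_1 = c_1^2-2c_2$ gives $2c_2(X) = (r^2-(n+1))\,c_1(\sO_X(1))^2$, and the reformulations of the Miyaoka--Yau and Bogomolov--Gieseker inequalities as $|r|\geq n+1$ and $|r|>n$ follow. Your treatment of $K_X\equiv 0$ is fine (though the paper rules that case out once and for all in Proposition~\ref{prop:4-4}(\ref{il:4-4-6}) via Serre duality). Your semistable branch also works: if $\sE_X$ is $(-K_X)$-semistable, Theorem~\ref{thm:1-2}'s klt analogue gives $|r|\geq n+1$, Kobayashi--Ochiai gives the reverse, and equality yields $\bP^n$.

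The genuine gap is the unstable branch, which you flag but do not close. Your plan --- ``feed a destabilising subsheaf of $\sE_X$ and its foliation into the structural results of Section~4'' --- is not carried out, and in fact no such argument appears in that section or elsewhere in the paper. What the paper does instead is avoid the stability dichotomy entirely: Proposition~\ref{prop:4-19} invokes Ou's generic nefness theorem \cite[Cor.~1.5]{Ou} to get $c_2(X)\cdot c_1(\sO_X(1))^{n-2}\geq 0$ \emph{unconditionally}, hence $r^2\geq n+1$, i.e.\ the Fano index $i=-r$ is at least $3$ for $n\geq 4$. Combined with Fujita's Kobayashi--Ochiai results, which force $i\leq n-1$, this leaves only $i=n-1$ (for $n=4,5$) and $i=3$ (for $n=5$). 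The case $i=n-1$ is killed via Fujita's classification of singular del Pezzo varieties as weighted sextic hypersurfaces, whose smooth members have semistable tangent bundle; by conservation of numbers, $X$ then satisfies Bogomolov--Gieseker, contradicting $|r|=n-1<n$. The case $n=5$, $i=3$ is killed by the parity constraint Corollary~\ref{cor:4-15}, obtained from Thom's topological invariance of $w_2$: $r-(n+1)$ must be even, but $-3-6=-9$ is odd. None of these three ingredients --- Ou's unconditional $\widehat c_2\geq 0$, the del Pezzo classification argument, and the $w_2$-parity --- appear in your proposal, and without them the unstable case remains open. So while your Chern-class computation coincides with the paper's, your route to the conclusion is incomplete precisely where the work is.
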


\subsection*{Dedication}
\approvals{Daniel & yes \\ Stefan & yes \\ Thomas & yes}

We dedicate this paper to the memory of Jean-Pierre Demailly.  His passing is a
tremendous loss to the mathematical community and to all who knew him.

\Publication{
\subsubsection*{Greb}

When I was a PhD student, Jean-Pierre's book ``Complex Analytic and Differential
Geometry'' was a revelation for me, as it connected the classical concepts of
Complex Analysis with those of modern Complex Differential Geometry and
Algebraic Geometry.  This greatly shaped my mathematical interests and still
influences me today.  When I later got to know him during several "Komplexe
Analysis" Oberwolfach meetings, I was deeply impressed by his vast knowledge of
the field that he shared generously and in his kind and gentle manner,
especially with younger people.

\subsubsection*{Kebekus}

I first met Jean-Pierre in the late 90s, when he graciously invited me to
Grenoble for my first extended research stay abroad.  From the moment I arrived,
I was struck by his relaxed and positive air, and by his can-do attitude towards
the hardest problems.  Over the years, I tried and tested his legendary
patience, when he generously shared his vast knowledge with newcomers to the
field, myself included.  Jean-Pierre's unparalleled clarity made even the most
challenging mathematical concepts accessible, and I cherished our discussions on
a wide range of topics, from free software to the intricacies of French labour
laws\footnote{Solidarity strike = no food on campus because train drivers demand
better working conditions}.

\subsubsection*{Peternell}

Since the late 1980s I had an invaluable close scientific and personal contact
with Jean-Pierre, with various mutual joint visits in Bayreuth and Grenoble.  I
will always commemorate Jean-Pierre's scientific wisdom and his great
personality. }

\subsection*{Acknowledgements}
\approvals{Daniel & yes \\ Stefan & yes \\ Thomas & yes}

We thank Markus Banagl, Sebastian Goette, Wolfgang Lück, Jörg Schürmann and
Michael Weiss for providing detailed guidance regarding Pontrjagin classes of
topological manifolds.  Igor Belegradek kindly answered our questions on
MathOverflow.  We also thank the referee, who suggested, among other
improvements, to generalize the results of Section~\ref{sec:3} to the Kähler
case.

%
%
\svnid{$Id: 02-rigidity.tex 291 2023-09-25 11:44:39Z kebekus $}
\selectlanguage{british}

\section{Mostow Rigidity for singular quotients of symmetric domains}
\subversionInfo
\approvals{Daniel & yes \\ Stefan & yes \\ Thomas & yes}
\label{sec:2}

Consider a compact Kähler manifold $X$ whose universal cover is a bounded
symmetric domain.  Siu has shown in \cite[Thm.~4]{MR584075} and \cite[Main
Theorem]{Siu81} that any compact Kähler manifold $M$ which is homotopy
equivalent to $X$ is biholomorphic or conjugate-biholomorphic\footnote{See also
\cite[Sect.~7]{MR3404712} and \cite[Chapt.~5 and 6]{MR1379330} as general
references for the main ideas behind Siu's results and for related topics.} to
$X$.  We show an analogous result for homeomorphisms between \emph{singular}
varieties $M$ and $X$.  The following notion will be used.

\begin{defn}[Quasi-étale cover]%
  A finite, surjective morphism between normal, irreducible complex spaces is
  called \emph{quasi-étale cover} if it is unbranched in codimension one.
\end{defn}

\begin{defn}[Singular quotient of bounded symmetric domtain]\label{def:2-1}%
  Let $Ω$ be an irreducible bounded symmetric domain.  A normal projective
  variety $X$ is called a \emph{singular quotient of $Ω$} if there exists a
  quasi-étale cover $\what{X} → X$, where $\what{X}$ is a smooth variety whose
  universal cover is $Ω$.
\end{defn}

\begin{rem}[Singular quotients are quotients]
  Let $X$ be a singular quotient of an irreducible bounded symmetric domain $Ω$.
  Passing to a suitable Galois closure, one finds a quasi-étale \emph{Galois}
  cover $\what{X} → X$, where $\what{X}$ is a smooth variety whose universal
  cover is $Ω$.  In particular, it follows that $X$ is a quotient variety and
  that it has quotient singularities.  Moreover, it can be shown as in
  \cite[Sect.~9]{GKPT19b} that $X$ is actually a quotient of $Ω$ by the
  fundamental group of $X_{\reg}$, which acts properly discontinously on $Ω$.
  In addition, the action is free in codimension one.
\end{rem}

\begin{thm}[Mostow rigidity in the klt setting]\label{thm:2-3}%
  Let $X$ be a singular quotient of an irreducible bounded symmetric domain and
  let $M$ be a normal projective variety that is homeomorphic to $X$.  If $\dim
  X ≥ 2$, then, $M$ is biholomorphic or conjugate-biholomorphic to $X$.
\end{thm}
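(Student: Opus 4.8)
The plan is to reduce the singular statement to Siu's rigidity theorem for compact Kähler manifolds by finding a common quasi-étale cover. Let $f \colon M \to X$ be a homeomorphism, and let $\what{X} \to X$ be a quasi-étale Galois cover, with Galois group $G$, where $\what{X}$ is smooth with universal cover the irreducible bounded symmetric domain $\Omega$, as provided by Definition~\ref{def:2-1} and the subsequent remark. The first step is to transport the cover $\what{X} \to X$ across the homeomorphism. Since $X$ is a quotient variety, $\what{X} \to X$ is topologically the quotient of $\what{X}$ by the properly discontinuous $G$-action, which is free in codimension one; composing with $f$ produces a topological $G$-action on $M$ whose quotient is $M$ and which is free away from a set of real codimension at least two. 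I would then argue that the preimage $\what{M} \to M$ is again a finite surjective map of normal complex spaces, unbranched in codimension one — i.e. a quasi-étale cover — by transporting the complex structure: $\what{M}$ carries a complex structure making $\what{M} \to M$ holomorphic because, over the locus where everything is étale, the complex structure pulls back, and it extends across the codimension-two branch locus by normality (using that $\what{M}$ is normal, being a finite cover of the normal space $M$ that is étale in codimension one, together with the fact that $\what{X}$ is smooth so $\what{M}$ is homeomorphic to a manifold).

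The second step is to understand $\what{M}$. By construction $\what{M}$ is homeomorphic to $\what{X}$, hence to a smooth projective (in fact compact Kähler) manifold whose universal cover is $\Omega$. I would like to conclude that $\what{M}$ is itself smooth. This is where I expect the main obstacle: a priori $\what{M}$ is only a normal klt-type variety that is a \emph{topological} manifold, and one must rule out that it has genuine (quotient) singularities that happen to be topologically trivial. The route is to invoke the characterization results quoted in the introduction — specifically the singular analogue of Theorem~\ref{thm:1-1} for ample canonical bundle from \cite[Thm.~1.5]{GKPT19}, i.e. the characterization of singular ball quotients (and more generally quotients of bounded symmetric domains) via the orbifold Miyaoka–Yau equality. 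For this one needs to know that the relevant Chern numbers of $\what{M}$ agree with those of $\what{X}$; since the latter sits on the boundary (equality) case of the Miyaoka–Yau inequality, it suffices to show that the $\bQ$-Chern classes $\widehat{c}_1^{\,2}$ and $\widehat{c}_2$, paired against the appropriate power of a polarization, are topological invariants of klt varieties of this type. That Chern-number invariance — which the introduction explicitly flags as a feature of the paper — is the technical heart: $c_1$ is essentially topological via $H^2$ and the canonical class, while $c_2$ requires identifying the orbifold second Chern class with a Pontrjagin-type characteristic class of the underlying topological manifold, using the guidance on Pontrjagin classes of topological manifolds acknowledged in the paper.

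Once $\what{M}$ is known to be a smooth compact Kähler manifold homeomorphic, hence homotopy equivalent, to $\what{X}$, Siu's theorem \cite[Thm.~4]{MR584075}, \cite{Siu81} applies and yields that $\what{M}$ is biholomorphic or conjugate-biholomorphic to $\what{X}$; the hypothesis $\dim X \geq 2$ is exactly what Siu requires. The final step is descent: the biholomorphism $\what{M} \to \what{X}$ must be made equivariant for the two $G$-actions so that it descends to $M \to X$. Here I would use that both $G$-actions are realized, via the fundamental groups $\pi_1(M_{\reg}) = \pi_1(X_{\reg})$ identified by $f$, as subgroups of the relevant automorphism (or anti-holomorphic automorphism) group of $\Omega$, together with Mostow-type rigidity for the lattice: the isomorphism of fundamental groups induced by $f$ is realized by a (conjugate-)biholomorphism of $\Omega$, and this conjugates the two group actions. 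Pushing the equivariant map down the quotients $\Omega \to \what{M} \to M$ and $\Omega \to \what{X} \to X$ gives the desired (conjugate-)biholomorphism $M \cong X$, with the "conjugate" alternative appearing exactly when the boundary map on fundamental groups is anti-holomorphic, completing the proof.
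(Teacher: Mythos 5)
Your overall architecture — lift to a quasi-étale Galois cover $\widehat X\to X$, transport it across the homeomorphism to produce a finite quasi-étale cover $\widehat M\to M$ with a normal complex structure, establish $\widehat M\cong\widehat X$, then descend via Mostow rigidity — matches the paper's Steps 1, 2 and 4 quite closely. But the crucial middle step, proving $\widehat M$ is smooth and (conjugate-)biholomorphic to $\widehat X$, is where you diverge from the paper and where a genuine gap sits. You propose to first prove smoothness of $\widehat M$ by invoking a Miyaoka--Yau-equality characterization of singular quotients, after first establishing that the relevant orbifold Chern numbers are homeomorphism invariants. This runs into three problems. First, it is circular relative to the paper's logic: Corollary~\ref{cor:2-5}, which asserts exactly that Chern-number invariance, is derived \emph{from} Theorem~\ref{thm:2-3}, not used to prove it, precisely because proving it directly is hard. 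Second, the characterization you cite applies only to the rank-one case ($\Omega$ the ball); Theorem~\ref{thm:2-3} allows an arbitrary irreducible bounded symmetric domain, and no analogous Chern-equality characterization in the klt setting is invoked or available for higher-rank $\Omega$. Third, Theorem~\ref{thm:2-3} assumes only that $M$ is \emph{normal projective}, not klt, so neither $M$ nor $\widehat M$ is known a priori to be klt, and the klt-setting uniformization results cannot be applied to them.

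The paper avoids all of this by never attempting to prove smoothness of $\widehat M$ first. Section~\ref{sec:2-2} proves a ``smooth target'' case of the theorem: $X$ smooth, $M$ merely normal projective. One resolves $M$ by $\pi\colon\widetilde M\to M$ and applies Siu's \emph{general} rigidity result \cite[Thm.~6]{MR584075} (rather than Theorem~4, which requires both manifolds to be quotients of the domain) to the continuous map $f\circ\pi\colon\widetilde M\to X$, producing a holomorphic or conjugate-holomorphic map $\widetilde g$. One then shows $\widetilde g$ factors through $\pi$ and that the induced $M\to X$ is an isomorphism, using the homeomorphism property and Zariski's Main Theorem. Applied with $(\widehat M,\widehat X)$ in place of $(M,X)$, since $\widehat X$ is smooth, this gives the (conjugate-)biholomorphism and hence smoothness of $\widehat M$ in one stroke, with no Chern-class machinery, no klt hypothesis, and no restriction to the ball. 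The final descent via Mostow rigidity is then essentially as you describe, though the paper must additionally handle the possible disconnectedness of $\Aut(\Omega)$ in the higher-rank case.
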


\begin{rem}[Varieties conjugate-biholomorphic to ball quotients]
  We are particularly interested in the case where the bounded symmetric domain
  of Theorem~\ref{thm:2-3} is the unit ball.  For this, observe that the set of
  (singular) ball quotients is invariant under conjugation.  It follows that if
  the variety $M$ of Theorem~\ref{thm:2-3} is biholomorphic or
  conjugate-biholomorphic to a (singular) ball quotient $X$, then $M$ is itself
  a (singular) ball quotient.
\end{rem}

Before proving Theorem~\ref{thm:2-3} in Sections~\ref{sec:2-1}--\ref{sec:2-3}
below, we note a first application: the Miyaoka-Yau Equality is a topological
property.  The symbols $\what{c}_•(X)$ in Corollary~\ref{cor:2-5} are the
$ℚ$-Chern classes of the klt space $X$, as defined and discussed for instance
\cite[Sect.~3.7]{GKPT19b}.

\begin{cor}[Topological invariance of the Miyaoka-Yau equality]\label{cor:2-5}%
  Let $X$ be a projective klt variety with $K_X$ ample.  Assume that the
  Miyaoka-Yau equality holds:
  \[
    \bigl( 2(n+1)· \what{c}_2(𝒯_X) - n · \what{c}_1(𝒯_X)² \bigr) · [K_X]^{n-2} =0.
  \]
  Let $M$ be a normal projective variety homeomorphic to $X$.  Then $M$ is klt,
  $K_M$ is ample and
  \[
    \bigl( 2(n+1)· \what{c}_2(𝒯_M) - n · \what{c}_1(𝒯_M)² \bigr) · [K_M]^{n-2} =0.
  \]
\end{cor}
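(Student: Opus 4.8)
The plan is to deduce Corollary~\ref{cor:2-5} from Theorem~\ref{thm:2-3} together with the characterization theorem \cite[Thm.~1.5]{GKPT19} of singular ball quotients by the Miyaoka-Yau equality. First I would invoke \cite[Thm.~1.5]{GKPT19}: since $X$ is projective klt with $K_X$ ample and the Miyaoka-Yau equality
\[
  \bigl( 2(n+1)· \what{c}_2(𝒯_X) - n · \what{c}_1(𝒯_X)² \bigr) · [K_X]^{n-2} =0
\]
holds, $X$ is a singular ball quotient in the sense of Definition~\ref{def:2-1}; in particular $X$ is the singular quotient of an irreducible bounded symmetric domain, namely the unit ball $𝔹^n$. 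If $\dim X = n = 1$, then $X$ is a smooth curve of genus $\geq 2$ and the statement is classical (a surface homeomorphic to such a curve is biholomorphic or conjugate-biholomorphic to it), so I may assume $n \geq 2$.

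Next, applying Theorem~\ref{thm:2-3} to $X$ and $M$: since $M$ is a normal projective variety homeomorphic to the singular quotient $X$ of the ball, and $\dim X \geq 2$, the conclusion is that $M$ is biholomorphic or conjugate-biholomorphic to $X$. By the Remark following Theorem~\ref{thm:2-3} (``Varieties conjugate-biholomorphic to ball quotients''), the class of singular ball quotients is invariant under conjugation, so in either case $M$ is itself a singular ball quotient. In particular $M$ has quotient singularities, hence is klt, and its canonical sheaf is ample: indeed, if $\phi\colon M \to X$ is the biholomorphism or conjugate-biholomorphism, then $\phi^* K_X$ and $K_M$ agree up to the action of complex conjugation on the ground field, which does not affect ampleness; alternatively, ampleness of $K_M$ follows directly from $M$ being a ball quotient.

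Finally, the Miyaoka-Yau equality for $M$ follows because $\what{c}_\bullet$, being defined via a quasi-étale cover and the Chern-Weil theory of the orbifold Kähler-Einstein metric (as in \cite[Sect.~3.7]{GKPT19b}), is preserved under biholomorphisms and, by naturality with respect to complex conjugation, also under conjugate-biholomorphisms — complex conjugation sends the Kähler-Einstein metric of $X$ to that of $\bar X$ and transforms Chern forms into their complex conjugates, leaving the real-valued intersection numbers appearing in the Miyaoka-Yau expression unchanged. Pulling back the vanishing relation along $\phi$ therefore yields
\[
  \bigl( 2(n+1)· \what{c}_2(𝒯_M) - n · \what{c}_1(𝒯_M)² \bigr) · [K_M]^{n-2} =0,
\]
as claimed. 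The only point requiring genuine care is the conjugate-biholomorphic case: one must check that $\what c_1^2$ and $\what c_2$ paired with $[K]^{n-2}$ are insensitive to replacing $X$ by its complex conjugate. Since these are intersection numbers of algebraic $\bQ$-classes and complex conjugation is a real-analytic automorphism preserving the real structure on cohomology (sending a class to its conjugate and flipping orientation only on odd-complex-dimensional factors, which does not occur here because we pair with a top-degree class on an even-real-dimensional space), this holds; I expect this verification to be the main technical obstacle, but it is routine given the setup in \cite{GKPT19b}.
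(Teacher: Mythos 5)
Your proposal is correct and follows essentially the same route as the paper: invoke the characterization of singular ball quotients via the Miyaoka–Yau equality (the paper cites \cite{GKPT19b} rather than \cite{GKPT19}, but both contain the needed uniformisation result), apply Theorem~\ref{thm:2-3}, and conclude that $M$ is itself a singular ball quotient, hence klt with $K_M$ ample and satisfying the equality. The paper's proof is terser — it simply notes that $M$ being a ball quotient forces the Miyaoka–Yau equality by the converse direction of the characterization, sidestepping the explicit pull-back-of-$\what{c}_\bullet$ argument you spell out; also, the $n=1$ case needs no separate treatment since the expression $[K_X]^{n-2}$ already presupposes $n\ge 2$.
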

\begin{proof}
  Since the Miyaoka-Yau Equality holds on $X$, there is a quasi-étale cover
  $\wtilde{X} → X$ such that the universal cover of $\wtilde{X}$ is the ball,
  \cite{GKPT19b}.  By Theorem~\ref{thm:2-3}, there is a quasi-étale cover
  $\wtilde{M} → M$ such that $\wtilde{M} ≅ \wtilde{X}$ biholomorphically or
  conjugate-biholomorphically.  Hence, the universal cover of $\wtilde{M}$ is
  the ball.  It follows that $M$ is klt, $K_M$ is ample, and that the
  Miyaoka-Yau Equality holds on $M$.
\end{proof}

\subsection{Preparation for the proof of Theorem~\ref*{thm:2-3}}
\approvals{Daniel & yes \\ Stefan & yes \\ Thomas & yes}
\label{sec:2-1}

The following lemma of independent interest might be well-known.  We include a
full proof for lack of a good reference.

\begin{lem}\label{lem:2-6}%
  Let $X$ be a normal complex space.  Then, the set $X_{\topsing} ⊂ X$ of
  topological singularities is a complex-analytic set.
\end{lem}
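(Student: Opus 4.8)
The plan is to exploit that a complex space is locally topologically trivial along the strata of a Whitney stratification, so that ``being a topological manifold point'' becomes a property that is constant along each stratum; from this one reads off that $X_{\topsing}$ is a locally finite union of stratum closures, hence analytic.

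I would start with two elementary observations. First, if $x \in X$ has an open neighbourhood $U$ homeomorphic to an open subset of some $\bR^m$, then every point of $U$ admits such a neighbourhood; hence the set of topological manifold points of $X$ is open, so that $X_{\topsing}$ is closed. Second, every smooth point of $X$ is a topological manifold point, so $X_{\topsing} \subseteq X_{\sing}$, and $X_{\sing}$ is a closed complex-analytic subset of strictly smaller dimension than $X$.

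Next I would fix a Whitney stratification $X = \bigsqcup_{\alpha} S_\alpha$ whose strata $S_\alpha$ are connected, locally closed, smooth complex-analytic subsets with complex-analytic closures $\overline{S_\alpha}$, and which is locally finite; such a stratification exists, e.g.\ by refining the filtration $X \supseteq X_{\sing} \supseteq (X_{\sing})_{\sing} \supseteq \cdots$. The substantive input is then the local topological triviality of a Whitney stratified set along its strata, a consequence of Thom's first isotopy lemma (Thom--Mather): for $x$ ranging over a single stratum $S_\alpha$, the homeomorphism type of a sufficiently small neighbourhood of $x$ in $X$ does not depend on $x$. Consequently each stratum $S_\alpha$ is either entirely contained in $X_{\topsing}$ or entirely disjoint from it.

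Finally I would assemble the pieces. Putting $A := \{\alpha : S_\alpha \subseteq X_{\topsing}\}$, we get $X_{\topsing} = \bigcup_{\alpha \in A} S_\alpha$. Since $X_{\topsing}$ is closed, it contains $\overline{S_\alpha}$ for every $\alpha \in A$, whence $X_{\topsing} = \bigcup_{\alpha \in A} \overline{S_\alpha}$ is a locally finite union of closed complex-analytic subsets of $X$, and therefore complex-analytic --- which is the assertion. (That the stratum closures are analytic is standard for Whitney stratifications of analytic sets; alternatively one can avoid invoking it by a short downward induction on the skeleta $X_{\le k} := \bigcup_{\dim S_\alpha \le k} S_\alpha$, noting that $X_{\topsing} \cap (X_{\le k} \setminus X_{\le k-1})$ is open and closed in the complex manifold $X_{\le k} \setminus X_{\le k-1}$ and using again that $X_{\topsing}$ is closed.) The one genuinely non-formal step is the local topological triviality along strata; everything else is bookkeeping. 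For this, and for the existence of Whitney stratifications with the properties used above, I would cite the standard references on stratified spaces (Thom, Mather, and Goresky--MacPherson).
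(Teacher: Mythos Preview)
Your proof is correct and follows essentially the same approach as the paper's: both take a Whitney stratification with complex-analytic stratum closures, invoke local topological triviality along strata to see that $X_{\topsing}$ is a union of strata, and then use openness of the topologically smooth locus to upgrade this to a (locally finite) union of stratum closures. Your write-up is a bit more explicit about the bookkeeping, but the argument is the same.
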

\begin{proof}
  Recall from \cite[Thm.~on p.~43]{GoreskyMacPherson} that $X$ admits a Whitney
  stratification where all strata are locally closed complex-analytic
  submanifolds of $X$.  Recall from \cite[Chapt.~IV.8]{MR1131081} that the
  closures of the strata are complex-analytic subsets of $X$.  Since Whitney
  stratifications are locally topologically trivial along the
  strata\footnote{See \cite[Part~I, Sect.~1.4]{GoreskyMacPherson} for a detailed
  discussion.}, it follows that $X_{\topsing}$ is locally the union of finitely
  many strata.  The additional observation that the set of topologically smooth
  points, $X ∖ X_{\topsing}$, is open in the Euclidean topology implies that
  $X_{\topsing}$ is locally the union of the closures of finitely many strata,
  hence analytic.
\end{proof}

\subsection{Proof of Theorem~\ref*{thm:2-3} if $X$ is smooth}
\approvals{Daniel & yes \\ Stefan & yes \\ Thomas & yes}
\label{sec:2-2}
\CounterStep

We maintain the notation of Theorem~\ref{thm:2-3} in this section and assume
additionally that $X$ is smooth.  To begin, fix a homeomorphism $f: M → X$ and
choose a resolution of singularities, say $π: \wtilde{M} → M$.  The composed map
$g = f◦π$ is continuous and induces an isomorphism
\begin{equation}\label{eq:2-7-1}
  g_*: H_{2n} \bigl(\wtilde{M},\, ℤ\bigr) → H_{2n}\bigl(X,\, ℤ\bigr).
\end{equation}
Hence, by Siu's general rigidity result \cite[Thm.~6]{MR584075} in combination
with the curvature computations for the classical, respectively exceptional
Hermitian symmetric domains done in \cite{MR584075, Siu81}, the continuous map
$g$ is homotopic to a holomorphic or conjugate-holomorphic map $\wtilde{g}:
\wtilde{M} → X$.  Replacing the complex structure on $X$ by the conjugate
complex structure, if necessary, we may assume without loss of generality that
$\wtilde{g}$ is holomorphic and hence in particular algebraic.  The isomorphism
\eqref{eq:2-7-1} maps the fundamental class of $\wtilde{M}$ to the fundamental
class of $X$, and $\wtilde{g}$ is hence birational.

We claim that the bimeromorphic morphism $\wtilde{g}$ factors via $π$.  To
begin, observe that since $g$ contracts the fibres of $π$ and since $\wtilde{g}$
is homotopic to $g$, the map $\wtilde{g}$ contracts the fibres of $π$ as well.
In fact, given any curve $\wtilde{C} ⊂ \wtilde{M}$ with $π(\wtilde{C})$ a point,
consider its fundamental class $[\wtilde{C}] ∈ H_2\bigl(\wtilde{M},\, ℝ\bigr)$.
By assumption, we find that
\[
  \wtilde{g}_* \bigl([\wtilde{C}]\bigr) = g_* \bigl([\wtilde{C}]\bigr) = 0 ∈ H_2\bigl(X,\, ℝ\bigr).
\]
Given that $X$ is projective, this is only possible if $\wtilde{g}(\wtilde{C})$
is a point.  Since $M$ is normal and since $\wtilde{g}$ contracts the
(connected) fibres of the resolution map $π$, we obtain the desired
factorisation of $\wtilde{g}$, as follows
\[
  \begin{tikzcd}[column sep=1.5cm]
    \wtilde{M} \ar[r, "π"'] \ar[rr, bend left=15, "\wtilde{g}"] & M \ar[r, "∃!  \wtilde{f}"'] & X.
  \end{tikzcd}
\]

We claim that the birational map $\wtilde{f}$ is
biholomorphic.\footnote{Cf.~\cite[Rem.~86(2)]{MR3404712}}  By Zariski's Main
Theorem, \cite[V~Thm.~5.2]{Ha77}, it suffices to verify that it does not
contract any curve $C ⊂ M$.  Aiming for a contradiction, assume that there
exists a curve $\wtilde{C} ⊂ \wtilde{M}$ whose image $C := π(\wtilde{C})$ is a
curve in $M$, while $\wtilde{g}(\wtilde{C}) = \wtilde{f}(C) = (*)$ is a point in
$X$.  Let $d >0$ be the degree of the restricted map $π|_{\wtilde{C}}:
\wtilde{C} → C$.  Then, on the one hand,
\[
  f_*\bigl(d·[C]\bigr) = f_*\bigl(π_*[\wtilde{C}]\bigr)
  = g_* [\wtilde{C}]
  = \wtilde{g}_* [\wtilde{C}]
  = 0 ∈ H_2\bigl(X,\, ℝ\bigr).
\]
On the other hand, projectivity of $M$ implies that $d·[C]$ is a non-trivial
element of $H_2\bigl(M,\, ℝ\bigr)$, which therefore must be mapped to a
non-trivial element of $H_2\bigl(X,\, ℝ\bigr)$, since $f$ is assumed to be a
homeomorphism.  This finishes the proof of Theorem~\ref{thm:2-3} in the case
where $X$ is smooth.  \qed

\subsection{Proof of Theorem~\ref*{thm:2-3} in general}
\approvals{Daniel & yes \\ Stefan & yes \\ Thomas & yes}
\label{sec:2-3}
\CounterStep

Maintain the setting of Theorem~\ref{thm:2-3}.

\subsubsection*{Step 1: Setup}
\approvals{Daniel & yes \\ Stefan & yes \\ Thomas & yes}

By assumption, there exists a bounded symmetric domain $Ω$ and a quasi-étale
cover $τ_X :\what{X} → X$ such that the universal cover of $\what{X}$ is $Ω$.
Choose a homeomorphism $f: M → X$ and let $\what{M} := \what{X} ⨯_X M$ be the
topological fibre product.  The situation is summarized in the following
commutative diagram,
\begin{equation}\label{eq:2-8-1}
  \begin{tikzcd}[column sep=2cm]
    \what{M} \ar[r, two heads, "τ_M"] \ar[d, "\simeq"'] & M \ar[d, "f", "\simeq"'] \\
    \what{X} \ar[r, two heads, "τ_X\text{, quasi-étale}"'] & X,
  \end{tikzcd}
\end{equation}
in which the vertical maps are homeomorphisms and the horizontal maps are
surjective with finite fibres.

\subsubsection*{Step 2: A complex structure on $\what{M}$}
\approvals{Daniel & yes \\ Stefan & yes \\ Thomas & yes}

The spaces $M$, $\what{X}$ and $X$ all carry complex structures.  We aim to
equip $\what{M}$ with a structure so that all horizontal arrows in
\eqref{eq:2-8-1} become holomorphic.

\begin{claim}\label{claim:2-9}%
  There exists a normal complex structure on $\what{M}$ that makes $τ_M$ a
  finite, holomorphic, and quasi-étale cover.
\end{claim}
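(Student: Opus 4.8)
We need to put a normal complex structure on $\what{M} = \what{X} \times_X M$ (the topological fibre product) making $\tau_M$ a finite holomorphic quasi-étale cover. The natural idea is to transport the complex structure of $\what{X}$ across the homeomorphism $f$, but one must be careful: the fibre product is taken in the topological category, and we want the algebraic/complex-analytic fibre product to agree with it.

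**The approach.** First I would work over the locus where everything is nicest. Let $X^\circ \subset X$ be the open set over which $\tau_X$ is étale; by the remark following Definition 2.1 we may take $\tau_X$ to be Galois, and $X \setminus X^\circ$ has codimension $\geq 2$. Over $X^\circ$, the cover $\tau_X$ corresponds to a subgroup of finite index in $\pi_1(X^\circ)$, equivalently to an étale $G$-cover for $G = \mathrm{Gal}(\what X/X)$. Since $f$ is a homeomorphism, $f^{-1}(X^\circ) =: M^\circ$ is open in $M$ with $M \setminus M^\circ$ of codimension $\geq 2$ (here I use that $f$ maps the analytic — hence even-real-codimensional — set $X\setminus X^\circ$ homeomorphically onto its image, so the complement in $M$ has real codimension $\geq 2$; combined with normality of $M$ this is what we need). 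Pulling back the topological $G$-cover along $f|_{M^\circ}$ gives a topological $G$-cover $\what M^\circ \to M^\circ$, i.e. a connected (or possibly disconnected — handle components separately) covering space, which carries a unique complex structure making $\tau_M|_{\what M^\circ}$ a finite étale holomorphic map, by the Riemann existence / lifting principle for covering spaces of complex manifolds (more precisely of the normal complex space $M^\circ$: unramified finite covers of normal complex spaces are canonically complex-analytic, cf.\ Grauert–Remmert / \cite[Chapt.~IV.8]{MR1131081} style results). At this stage $\what M^\circ$ is a normal complex space and $\tau_M|_{\what M^\circ}$ is finite, holomorphic, étale.

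**Extending over codimension two.** Now $\what M^\circ \subset \what M$ is the complement of $\tau_M^{-1}(M \setminus M^\circ)$, which, via the homeomorphism $\what M \simeq \what X$, is a closed subset whose complement is the analytic open set $\what X \setminus \tau_X^{-1}(X \setminus X^\circ)$; in particular $\what M \setminus \what M^\circ$ has "codimension $\geq 2$" in the appropriate topological sense. I would extend the complex structure from $\what M^\circ$ to all of $\what M$ by normalizing: consider the finite surjective continuous map $\tau_M : \what M \to M$, note $M$ is a normal complex space, and use the standard fact that a finite covering of a normal complex space, étale in codimension one over the smooth locus and with normal total space, extends uniquely to a finite holomorphic quasi-étale cover — concretely, take the normalization of $M$ in the function field of $\what M^\circ$ (or glue local models), obtaining a normal complex space $\what M'$ with a finite holomorphic $\tau' : \what M' \to M$ restricting to $\tau_M|_{\what M^\circ}$ over $M^\circ$. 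The final step is to identify $\what M'$ with $\what M$ as topological spaces over $M$: both are finite covers of $M$ agreeing over the dense open $M^\circ$ whose complement has codimension $\geq 2$, and the topological fibre product $\what M$ is, by properness and normality, the unique such extension (equivalently, $\what M$ is the "topological normalization" of $M$ in $\what M^\circ$). This identification makes $\tau_M : \what M \to M$ holomorphic, finite, and quasi-étale, and $\what M$ normal, proving the claim. (That the resulting complex structure is the "right" one, i.e.\ compatible with later steps making $\what M \simeq \what X$ holomorphic, will presumably be addressed in the subsequent steps, not here.)

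**Main obstacle.** The delicate point is the extension across $M \setminus M^\circ$ and the verification that the algebraically-defined normalization $\what M'$ coincides with the purely topological fibre product $\what M$. This requires knowing that finite topological covers of a normal complex space, unramified in codimension one, are uniquely determined by their restriction to the complement of a codimension-$\geq 2$ set — a purified Zariski–Nagata / Grauert–Remmert type statement — and that $M \setminus M^\circ$ genuinely has codimension $\geq 2$, which rests on the homeomorphism $f$ together with Lemma~\ref{lem:2-6} (the analytic set $X\setminus X^\circ$ pulls back to a closed set of real codimension $\geq 2$). I expect this topological-to-analytic comparison, rather than the construction of the complex structure on the étale locus, to be where the real work lies.
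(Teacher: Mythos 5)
Your outline matches the paper's — build the complex structure over a large open locus, then extend across a small complement — but both points you flag as delicate are where your argument has real gaps that the paper resolves differently. First, the extension step requires the bad locus in $M$ to be an \emph{analytic} subset, not merely closed of the right real codimension: a closed subset that is the preimage of an analytic set under a homeomorphism need not be analytic, and the extension theorems to be used need analyticity. The paper's key move is to work over $X_0 = X_{\reg}$ (rather than your étale locus $X^\circ$) and to observe that $X_{\sing} = X_{\topsing}$ because $X$ has quotient singularities, which are not topologically smooth; consequently $M \setminus M_0 = f^{-1}(X_{\sing}) = M_{\topsing}$ is an \emph{intrinsically topologically defined} subset of $M$, and Lemma~\ref{lem:2-6} shows that this set is analytic. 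You cite Lemma~\ref{lem:2-6} but seem to read it as a codimension statement; its content is precisely the needed analyticity, and the identification of $M \setminus M_0$ with the topological singular locus is the missing link in your argument.

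Second, the paper does not construct a separate normalization $\what{M}'$ and then compare it with $\what{M}$, which is exactly the step you flag as ``where the real work lies'' and leave unresolved. Instead it applies the extension theorems of Dethloff--Grauert and Stein directly to the given topological space $\what{M}$ (a topological manifold, being homeomorphic to the smooth $\what{X}$), producing a normal complex structure on $\what{M}$ itself that makes $\tau_M$ holomorphic and finite; quasi-étaleness then follows by comparing the branch loci of $\tau_M$ and $\tau_X$. This sidesteps the identification problem entirely, whereas your detour through the normalization of $M$ in $K(\what{M}^\circ)$ leaves you having to prove a Grauert--Remmert-type uniqueness statement (and to show the topological and analytic compactifications agree) that you do not supply.
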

\begin{proof}[Proof of Claim~\ref{claim:2-9}]
  Let $X_0$ be the smooth locus of $X$, set $M_0 := f^{-1}(X_0)$ and $\what{M}_0
  := τ_M^{-1}(M_0)$.  The map $τ_M|_{\what{M}_0}$ being a local homeomorphism,
  there is a uniquely determined complex structure on $\what{M}_0$ such that
  $τ_M|_{ \what{M}_0}: \what{M}_0 → M_0$ is a finite holomorphic cover.  Since
  $X$ has quotient singularities, the topological and holomorphic singularities
  agree, $X_{\topsing} = X_{\sing}$.  Hence, $f$ being a homeomorphism, we note
  that
  \[
    M_{\topsing} = f^{-1}\left(X_{\sing}\right)%
    \quad\text{and}\quad
    M ∖ M_0 = M_{\topsing}.
  \]
  We have seen in Lemma~\ref{lem:2-6} that $M_{\topsing}$ is an analytic set.
  Therefore, by \cite[Thm.~3.4]{DethloffGrauert} and \cite[Satz~1]{MR83045}, the
  complex structure on $\what{M}_0$ uniquely extends to a normal complex
  structure on the topological manifold $\what{M}$, making $τ_M$ holomorphic and
  finite.  The branch locus of $τ_M$ has the same topological dimension as the
  branch locus of $τ_X$, so that $τ_M$ is quasi-étale, as claimed.
  \qedhere~(Claim~\ref{claim:2-9})
\end{proof}

Note that as a finite cover of the projective variety $M$, the normal complex
space $\what{M}$ is again projective.

\subsubsection*{Step 3: $\what{M}$ as a quotient of $Ω$}
\approvals{Daniel & yes \\ Stefan & yes \\ Thomas & yes}
\CounterStep

The homeomorphic varieties $\what{X}$ and $\what{M}$ reproduce the assumptions
of Theorem~\ref{thm:2-3}.  The partial results of Section~\ref{sec:2-2}
therefore apply to show that the complex spaces $\what{M}$ and $\what{X}$ are
biholomorphic or conjugate-biholomorphic.  Replacing the complex structures on
$M$ and $\what{M}$ by their conjugates, if necessary, we assume without loss of
generality for the remainder of this proof that $\what{M}$ and $\what{X}$ are
biholomorphic.  This has two consequences.
\begin{enumerate}
  \item The projective variety $\what{M}$ is smooth.  The universal cover of
  $\what{M}$ is biholomorphic to $Ω$.

  \item \label{il:2-10-2} Its quotient $M$ is a singular quotient of $Ω$ and has
  only quotient singularities.
\end{enumerate}
Recalling that quotient singularities are not topologically smooth,
Item~\ref{il:2-10-2} implies that the homeomorphism $f: M → X$ restricts to a
homeomorphism between the smooth loci, $X_{\reg}$ and $M_{\reg}$.  The situation
is summarized in the following commutative diagram,
\[
  \begin{tikzcd}[column sep=2.2cm]
    Ω \ar[r, two heads, "u_X\text{, univ.~cover}"] \ar[d, "\simeq"'] & \what{M} \ar[r, two heads, "τ_M\text{, quasi-étale}"] \ar[d, "\simeq"'] & M \ar[d, "f", "\simeq"'] & M_{\reg} \ar[l, hook, "\text{inclusion}"'] \ar[d, "f|_{X_{\reg}}", "\simeq"'] \\
    Ω \ar[r, two heads, "u_M\text{, univ.~cover}"'] & \what{X} \ar[r, two heads, "τ_X\text{, quasi-étale}"'] & X & X_{\reg}, \ar[l, hook, "\text{inclusion}"]
  \end{tikzcd}
\]
where all horizontal maps are holomorphic, and all vertical maps are
homeomorphic.

\medskip

The description of $M$ as a singular quotient of $Ω$ can be made precise.  The
argument in \cite[Sect.~9.1]{GKPT19b} shows that the fundamental group
$π_1(M_{\reg})$ acts properly discontinously on $Ω$ with quotient $M$.  In
particular, we have an injective homomorphism from $π_1(M_{\reg})$ into the
holomorphic automorphism group $\Aut(Ω)$ of $Ω$, with image a discrete cocompact
subgroup $Γ_M ⊆ \Aut(Ω)$.

The same reasoning also applies to $X$ and presents $X$ as a quotient $X =
Ω/π_1(X_{\reg})$, where $π_1(X_{\reg})$ again acts via an injective homomorphism
$π_1(X_{\reg}) ↪ \Aut(Ω)$, with image a cocompact, discrete subgroup $Γ_X$ of
$\Aut(Ω)$.

As we have seen above, $f$ induces a homeomorphism from $M_{\reg}$ to
$X_{\red}$, from which we obtain an abstract group isomorphism $θ: Γ_M → Γ_X$.

\subsubsection*{Step 4: End of proof}
\approvals{Daniel & yes \\ Stefan & yes \\ Thomas & yes}

In the remainder of the proof we will show that not only $\what{M}$ and
$\what{X}$ are (conjugate-)-biholomorphic, but that this actually holds for $M$
and $X$.  This will be a consequence of Mostow's rigidity theorem for lattices
in connected semisimple real Lie groups.  As the groups appearing in our
situation are not necessarily connected, we have to do some work to reduce to
the connected case\footnote{Alternatively, one could trace the finite group
actions through the proof of the results used in Section~\ref{sec:2-2}.}.

Given that $Ω$ is an irreducible Hermitian symmetric domain of dimension greater
than one, the identity component $\Aut°(Ω) ⊆ \Aut(Ω)$ coincides with the
identity component $I°(Ω)$ of the isometry group $I(Ω)$ of the Riemannian
symmetric space $Ω$, \cite[VIII.Lem.~4.3]{MR1834454}\footnote{As $Ω$ is
irreducible, the compatible Riemannian metric on $Ω$ is unique up to a positive
real multiple that does not change the isometry group.}, which is a non-compact
simple Lie group without non-trivial proper compact normal subgroups and with
trivial centre, \cite[Prop.~2.1.1 and bottom of p.~379]{MR1441541}.  We also
note that a Bergman-metric argument shows that $\Aut(Ω)$ is contained in $I(Ω)$.
Furthermore, both Lie groups have only finitely many connected components.

\begin{claim}\label{claim:2-11}%
  There exists an isometry $F ∈ I(Ω)$ such that
  \begin{equation}\label{eq:2-11-1}
    F◦γ = θ(γ)◦F, \quad \text{for every } γ ∈ Γ_M.
  \end{equation}
\end{claim}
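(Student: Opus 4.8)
The plan is to apply Mostow's rigidity theorem, but since $\Aut(\Omega)$ and hence $\Gamma_M, \Gamma_X$ need not be connected, I would first reduce to the connected semisimple Lie group $G := I^\circ(\Omega) = \Aut^\circ(\Omega)$. Set $\Gamma_M^\circ := \Gamma_M \cap G$ and $\Gamma_X^\circ := \Gamma_X \cap G$. Since $G$ has finite index in $I(\Omega)$ and $\Gamma_M, \Gamma_X$ are cocompact lattices in $I(\Omega)$, the subgroups $\Gamma_M^\circ$ and $\Gamma_X^\circ$ are cocompact lattices in $G$ of finite index in $\Gamma_M$, respectively $\Gamma_X$. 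The first key point is to check that the isomorphism $\theta \colon \Gamma_M \to \Gamma_X$ carries $\Gamma_M^\circ$ onto $\Gamma_X^\circ$. This is where I would use that $\theta$ is geometric: it is induced by the homeomorphism $f|_{M_{\reg}}$, which in turn is realised (after passing to the smooth covers $\what M$ and $\what X$) by the biholomorphism established in Step~3. Thus the conjugation action of $\Gamma_M$ on $\Gamma_M^\circ$ matches that of $\Gamma_X$ on $\Gamma_X^\circ$, and more to the point $\Gamma_M^\circ$ can be characterised intrinsically inside $\Gamma_M$ — for instance as the kernel of the (finite) component-group homomorphism $\Gamma_M \to \pi_0(I(\Omega))$, or as a canonically defined finite-index normal subgroup — so that $\theta(\Gamma_M^\circ) = \Gamma_X^\circ$ automatically.

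Next I would invoke Mostow rigidity in the strong form valid for cocompact lattices in a connected semisimple Lie group $G$ with trivial centre and no compact factors: the abstract isomorphism $\theta|_{\Gamma_M^\circ} \colon \Gamma_M^\circ \to \Gamma_X^\circ$ is induced by conjugation by a unique element of $\Aut(G)$, and because $G$ here is simple with trivial centre and the relevant outer automorphisms are realised by elements of $I(\Omega)$ (which normalises $G$), there is an element $F \in I(\Omega)$ with $F \circ \gamma = \theta(\gamma) \circ F$ for all $\gamma \in \Gamma_M^\circ$. The dimension hypothesis $\dim X \geq 2$, i.e.\ $\dim_{\mathbb C}\Omega \geq 2$, is exactly what guarantees $G$ is not locally isomorphic to $\mathrm{PSL}_2(\mathbb R)$, so that Mostow (rather than merely Margulis superrigidity, which would also work) applies and forces the intertwiner to be an honest isometry.

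The last step is to upgrade the intertwining relation from $\Gamma_M^\circ$ to all of $\Gamma_M$. Here I would argue that $F$ is unique: if $F'$ also intertwined $\Gamma_M^\circ$ with $\Gamma_X^\circ$, then $F^{-1}F'$ would centralise the Zariski-dense subgroup $\Gamma_M^\circ$ of $G$, hence centralise $G$, hence be trivial since $G$ has trivial centre and no element of $I(\Omega) \setminus G$ centralises $G$ (the outer action on the simple group $G$ being faithful). Given any $\gamma \in \Gamma_M$, conjugation by $\gamma$ preserves $\Gamma_M^\circ$ and conjugation by $\theta(\gamma)$ preserves $\Gamma_X^\circ$; a short computation shows that $\theta(\gamma)^{-1} \circ F \circ \gamma$ also intertwines $\Gamma_M^\circ$ with $\Gamma_X^\circ$, so by uniqueness it equals $F$, giving \eqref{eq:2-11-1} for all of $\Gamma_M$. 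I expect the main obstacle to be the bookkeeping in the reduction to the connected case — specifically, verifying that $\theta$ respects the finite-index subgroups and that the outer automorphism produced by Mostow lifts to $I(\Omega)$ rather than to some larger abstract automorphism group of $G$; the rigidity input itself is a black box.
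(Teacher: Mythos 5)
The weak point is exactly the one you flag yourself: the claim that $\theta(\Gamma_M^\circ) = \Gamma_X^\circ$, where $\Gamma_M^\circ = \Gamma_M \cap I^\circ(\Omega)$. Neither of the two justifications you offer survives scrutiny. The kernel of the component-group homomorphism $\Gamma_M \to \pi_0(I(\Omega))$ is \emph{not} intrinsic to the abstract group $\Gamma_M$ — it depends on the embedding $\Gamma_M \hookrightarrow I(\Omega)$, which is precisely the extra geometric data that $\theta$, being only an abstract group isomorphism, is not a priori compatible with. And the biholomorphism from Step~3 only identifies the deck-transformation subgroups $\Gamma_{\what M}$ and $\Gamma_{\what X}$ of the chosen quasi-étale covers; those are contained in $\Gamma_M \cap I^\circ(\Omega)$ resp. $\Gamma_X \cap I^\circ(\Omega)$, but need not equal them, so you cannot conclude anything about $\Gamma_M^\circ$ from it. The paper sidesteps this entirely by working with $\Gamma_{\what M}$ and $\Gamma_{\what X}$ directly, which \emph{are} carried to each other by $\theta$ by construction of the fibre-product diagram. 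Your proof would be repaired either by switching to these canonical subgroups, or by brute force: replacing $\Gamma_M^\circ$ with $\Gamma_M^\circ \cap \theta^{-1}(\Gamma_X^\circ)$, which is still a normal finite-index (hence cocompact, Zariski-dense) subgroup of $\Gamma_M$ contained in $I^\circ(\Omega)$, and for which $\theta$-compatibility holds by fiat.

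Your extension step, on the other hand, genuinely differs from the paper's. You use uniqueness of the intertwiner $F$, which turns on the fact that the centralizer $Z_{I(\Omega)}(I^\circ(\Omega))$ is trivial; you assert this but do not prove it (the parenthetical ``the outer action being faithful'' is equivalent to the statement, not a proof of it — though it does hold, e.g.\ because any centralizing isometry is a Clifford translation and $\Omega$, being irreducible of noncompact type, has none). Given that, your normality-plus-uniqueness argument is clean and correct: $F' := \theta(\gamma)^{-1} F \gamma$ intertwines the finite-index subgroup because it is normal, so $F' = F$. The paper instead imports Eberlein's density argument for $\Gamma_M$-periodic vectors in $S\Omega$ and follows his computation on pp.~379--380 verbatim; your route is conceptually lighter and avoids any appeal to the dynamics of the geodesic flow, at the cost of the centralizer lemma. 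Both also need a word on why $\theta(\gamma)^{-1} F \gamma$ lies in $I(\Omega)$ (it does, since all three factors do) and on why the dimension hypothesis lets Mostow apply — which you handle correctly.
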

\begin{proof}[Proof of Claim~\ref{claim:2-11}]
  If the rank of $Ω$ is equal to one, then $Ω ≅ 𝔹_n$, the unit ball in $𝔹^n$,
  see \cite[Sect.~X, \S6.3/4]{MR1834454}.  Consequently, the group $\Aut(Ω)$ is
  connected, and we may apply \cite[Thm.~A' on p.~4]{MR0385004} to obtain an
  automorphism of real Lie groups, $Θ : \Aut(Ω) → \Aut(Ω)$ such that $Θ|_{Γ_M} =
  θ$.  The desired isometry is then produced by an application of
  \cite[Prop.~3.9.11]{MR1441541}.

  We consider the case $\rank(Ω) ≥ 2$ for the remainder of the present proof,
  where the automorphism group may be non-connected.  To deal with this slight
  difficulty, we proceed as in \cite[p.~379f]{MR1441541}: as $\Aut(Ω)$ has
  finitely many connected components, we may assume that the subgroups
  $Γ_{\what{M}} ⊆ Γ_M$ and $Γ_{\what{X}} ⊆ Γ_X$ corresponding to the deck
  transformation groups of $u_M$ and $u_X$, respectively, are contained in the
  identity component $I°(Ω) = \Aut°(Ω)$.  Again, apply \cite[Thm.~A' on
  p.~4]{MR0385004} to obtain an automorphism of real Lie groups $Θ : I°(Ω) →
  I°(Ω)$ such that $Θ|_{Γ_{\what{M}}} = θ|_{Γ_{\what{M}}}$ and then
  \cite[Prop.~3.9.11]{MR1441541} to obtain an isometry $F ∈ I(Ω)$ such that
  \begin{equation*}
    F ◦ g = Θ(g) ◦ F, \quad \text{for every } g ∈ I°(Ω).
  \end{equation*}
  This in particular yields \eqref{eq:2-11-1} for all $γ$ contained in the
  finite index subgroup $Γ_{\what{M}}$ of $Γ_M$.  This is not yet enough.

  However, noticing that for any finite index subgroup $Γ'_M < Γ_M$, every
  $Γ'_M$-periodic vector in the sense of \cite[Def.~4.5.13]{MR1441541} by
  definition is also $Γ_M$-periodic, we see with the argument given in
  \cite[p.~379]{MR1441541}, which uses essentially the same notation as we have
  introduced here, that the set of $Γ_M$-periodic vectors is dense in the unit
  sphere bundle $SΩ$ of $Ω$.  The subsequent argument in \cite[bottom of p.~379
  and upper part of p.~380]{MR1441541} then applies verbatim to yield the
  desired relation \eqref{eq:2-11-1} for all $γ ∈ Γ_M$; this is \cite[equation
  (5) on p.~380]{MR1441541}.  \qedhere~(Claim~\ref{claim:2-11})
\end{proof}

Now, since the Hermitian symmetric domain $Ω$ is assumed to be irreducible, the
$Γ$-equivariant isometry $F ∈ I(Ω)$ is either holomorphic or
conjugate-holomorphic, as follows for example from~\cite{363432} together with
\cite[VIII.Prop.~4.2]{MR1834454}.  By the universal property of the quotient map
$π$ with respect to $Γ$-invariant holomorphic maps, $F$ hence descends to a
holomorphic or conjugate-holomorphic isomorphism from $M$ to $X$.  This
completes the proof of Theorem~\ref{thm:2-3}.  \qed

%
%
\svnid{$Id: 03-flat.tex 290 2023-09-25 11:40:43Z peternell $}
\selectlanguage{british}

\section{Topological characterization of torus quotients}
\subversionInfo
\approvals{Daniel & yes \\ Stefan & yes \\ Thomas & yes}
\label{sec:3}

In line with the results of Section~\ref{sec:2}, we show that a Kähler space
with klt singularities is a singular torus quotient if and only if it is
homeomorphic to a singular torus quotient.  In the smooth case, this was shown
by Catanese \cite{MR1945361}, but see also \cite[Thm.~2.2]{MR2295551}.  The
following notion is a direct analogue of Definition~\ref{def:2-1} above.

\begin{defn}[Singular torus quotient]
  A normal complex space $X$ is called a \emph{singular torus quotient} if there
  exists a quasi-étale cover $\what{X} → X$, where $\what{X}$ is a compact
  complex torus.
\end{defn}

\begin{rem}[Singular torus quotients are quotients]
  Let $X$ be a singular torus quotient.  Passing to a suitable Galois closure,
  one finds a quasi-étale \emph{Galois} cover $\what{X} → X$, where $\what{X}$
  is a compact torus.
\end{rem}

\begin{thm}[Varieties homeomorphic to torus quotients]\label{thm:3-3}%
  Let $M$ be a compact complex space with klt singularities.  Assume that $M$ is
  bimeromorphic to a Kähler manifold.  If $M$ is homeomorphic to a singular
  torus quotient, then $M$ is a singular torus quotient.
\end{thm}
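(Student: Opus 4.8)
The plan is to mimic the strategy of Section~\ref{sec:2}: pass to a quasi-étale cover of the model torus quotient, pull back the cover to $M$ via the homeomorphism, equip the topological fibre product with a complex structure, invoke the smooth case (Catanese's theorem, or the Kähler version), and then descend. Concretely, let $X$ be a singular torus quotient homeomorphic to $M$, and fix a homeomorphism $f\colon M\to X$. Choose a quasi-étale Galois cover $\what{X}\to X$ with $\what{X}$ a compact complex torus, and form $\what{M}:=\what{X}\times_X M$, a topological space carrying a finite surjective map $\tau_M\colon\what{M}\to M$ and a homeomorphism $\what{M}\to\what{X}$. As in Claim~\ref{claim:2-9}, one equips $\what{M}$ with a normal complex structure: the key input is that torus quotients have only quotient singularities, so that $X_{\topsing}=X_{\sing}$; hence $M_{\topsing}=f^{-1}(X_{\sing})$ is the analytic singular locus of $M$ (using Lemma~\ref{lem:2-6}), one defines the complex structure on $\tau_M^{-1}(M_{\reg})$ by the local-homeomorphism property, and extends it across the analytic set $\tau_M^{-1}(M_{\topsing})$ by \cite{DethloffGrauert, MR83045}. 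Since $M$ is bimeromorphic to a Kähler manifold and $\what{M}\to M$ is finite, $\what{M}$ is again bimeromorphic to a Kähler manifold, and it is a compact complex space with klt singularities.

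Next I would reduce to the smooth case. The complex space $\what{M}$ is klt and homeomorphic to the torus $\what{X}$; it therefore has trivial canonical class (numerically, after passing to a further quasi-étale cover trivialising $K_{\what M}$, say by the abundance/Beauville--Bogomolov-type structure results for klt spaces with $c_1=0$ that are bimeromorphic to Kähler manifolds), and in fact I claim $\what{M}$ is \emph{smooth}. To see this, run the argument of Section~\ref{sec:2-2} with resolution $\pi\colon\wtilde{\what M}\to\what M$: the composite $\wtilde{\what M}\to\what X$ is homotopic (by the Albanese / harmonic-map comparison underlying Catanese's theorem) to a holomorphic map, which is birational since it respects fundamental classes, and which factors through $\pi$ because it contracts exactly the $\pi$-fibres; the resulting birational morphism $\what M\to\what X$ contracts no curve by the projectivity/Kähler $H_2$-argument, so it is biholomorphic by Zariski's Main Theorem. (Alternatively, invoke directly the klt characterisation of torus quotients from \cite{LT18, MR4263792}: once one knows the relevant Chern numbers of $\what M$ agree with those of $\what X$, which are topological, $\what M$ is already a torus quotient, and being homeomorphic to a torus with no quotient singularities it is a torus.) Either way one concludes $\what M$ is a compact complex torus, hence $M=\what M/G$ is a singular torus quotient. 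Replacing complex structures by conjugates where necessary does not affect the class of torus quotients.

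The main obstacle, and the point where the torus case genuinely differs from Section~\ref{sec:2}, is the \emph{rigidity input}: there is no Mostow rigidity here, so one cannot conclude directly that $\what M\cong\what X$ as complex manifolds — and indeed it is false, as $\what X$ can be deformed. What is true, and what one actually needs, is the weaker statement that $\what M$ is \emph{some} complex torus; this is exactly Catanese's theorem (a manifold bimeromorphic to a Kähler manifold and homeomorphic to a torus is biholomorphic to a torus), whose proof uses the Albanese map together with the fact that the Albanese has the right dimension and degree one by Hodge-theoretic and topological considerations. The second delicate point is the passage from ``$\what M$ is a torus'' back to ``$M$ is a singular torus quotient'': here one must check that the deck group $G$ of $\tau_M$ acts on $\what M$ by biholomorphisms and quasi-étale in codimension one, which follows because $\tau_M$ was constructed holomorphic and finite and its branch locus matches that of the quasi-étale $\tau_X$ under the homeomorphism, exactly as in the last lines of the proof of Claim~\ref{claim:2-9}. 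Finally, the Kähler-versus-projective bookkeeping needs the observation that ``bimeromorphic to a Kähler manifold'' is inherited by finite covers and is precisely the hypothesis under which the singular Beauville--Bogomolov decomposition and Catanese's argument are available.
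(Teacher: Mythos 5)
Your proposal follows essentially the same route as the paper: form the topological fibre product $\what{M}=\what{X}\times_X M$, endow it with a normal complex structure via the argument of Claim~\ref{claim:2-9}, observe that $\what{M}$ is klt and bimeromorphic to a Kähler manifold, reduce to the statement that a compact complex space with (rational, hence in particular klt) singularities that is bimeromorphic to a Kähler manifold and homotopy equivalent to a torus is a torus, and descend. The paper packages this last reduction as Proposition~\ref{prop:3-5}, which is precisely the singular version of Catanese's theorem you reach in your third paragraph.

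Two points worth flagging, both of which you essentially caught yourself. First, the initial version of your smooth-case argument (``homotop $\wtilde{\what M}\to\what X$ to a holomorphic map, factor through $\pi$, conclude biholomorphism by ZMT'') is not available: there is no Siu/harmonic-map rigidity in the flat setting that would make a continuous map to the fixed torus $\what X$ homotopic to a holomorphic one, and indeed $\what M\cong\what X$ is false in general, exactly as you observe in the ``main obstacle'' paragraph. The paper instead takes the Albanese map of a Kähler resolution $\wtilde{\what M}$ as the holomorphic map to work with; rational singularities force it to factor through $\what M$ (Reid), and the exterior-algebra structure of $H^*(\text{torus})$ plus the Betti-number argument forces the factored map $\what M\to\Alb$ to be a biholomorphism. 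Second, the detour via abundance/Beauville--Bogomolov to get $c_1(\what M)\equiv 0$ is unnecessary: the Albanese argument gives smoothness and torus structure at once, without needing to know the canonical class in advance. With these clarifications your plan agrees with the paper's proof.
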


Theorem~\ref{thm:3-3} will be shown in Sections~\ref{sec:3-1}--\ref{sec:3-2}
below.  In analogy to Corollary~\ref{cor:2-5} above, we note that vanishing of
$ℚ$-Chern classes is a topological property among compact Kähler spaces with klt
singularities.

\begin{cor}[Topological invariance of vanishing Chern classes]
  Let $X$ be a compact Kähler space with klt singularities.  Assume that the
  canonical class vanishes numerically, $K_X \equiv 0$, and that the second
  $ℚ$-Chern class of $𝒯_X$ satisfies
  \[
    \what{c}_2 (𝒯_X)· α_1 ⋯ α_{\dim X-2}= 0,
  \]
  for every $(\dim X-2)$-tuple of Kähler classes on $X$.  If $M$ is any compact
  Kähler space with klt singularities that is homeomorphic to $X$, then $K_M
  \equiv 0$, and the second $ℚ$-Chern class of $𝒯_M$ satisfies
  \[
    \what{c}_2 (𝒯_M)· β_1 ⋯ β_{\dim M-2} = 0,
  \]
  for every $(\dim X-2)$-tuple of Kähler classes on $M$.
\end{cor}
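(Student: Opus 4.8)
The plan is to imitate the proof of Corollary~\ref{cor:2-5}, with Theorem~\ref{thm:3-3} playing the role of Theorem~\ref{thm:2-3} and the characterisation of singular torus quotients by vanishing Chern classes playing the role of the ball-quotient characterisation used there.

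First I would invoke the characterisation of singular torus quotients among klt spaces by vanishing Chern classes — the klt analogue of Yau's theorem, cf.\ \cite{LT18} and \cite[Thm.~1.2]{MR4263792} in the projective case, the Kähler case following along the same lines from the klt Beauville--Bogomolov decomposition together with the strict positivity of $\what c_2$ on Calabi--Yau and irreducible holomorphic-symplectic factors — to conclude from $K_X ≡ 0$ and $\what c_2(�T_X)·α_1 ⋯ α_{\dim X -2} = 0$ for all Kähler classes $α_i$ that $X$ is a singular torus quotient. Next I would check that $M$, being a compact Kähler space with klt singularities, is bimeromorphic to a Kähler manifold: it suffices that a resolution of $M$ can be chosen Kähler, which holds because one may take a resolution $π\colon \wtilde M → M$ that is projective over $M$, whence $π^*ω_M + ε·c_1(A)$ is Kähler for a $π$-ample line bundle $A$ and small $ε>0$. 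Theorem~\ref{thm:3-3} then applies to the homeomorphic pair $(M, X)$ and shows that $M$ is itself a singular torus quotient; fix a quasi-étale cover $τ\colon \what M → M$ with $\what M$ a compact complex torus.

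It then remains to read off the Chern-class statement for $M$. Since $\what M$ is a torus, its canonical class is trivial and $�T_{\what M}$ is free, so $\what c_2(�T_{\what M}) = 0$. As $τ$ is quasi-étale, Riemann--Hurwitz gives $K_{\what M} = τ^* K_M$ as $ℚ$-Weil divisor classes (the ramification divisor being supported in codimension $≥2$), and $τ^*\what c_2(�T_M) = \what c_2(�T_{\what M})$ by the compatibility of $ℚ$-Chern classes with quasi-étale covers \cite[Sect.~3.7]{GKPT19b}. Because $τ$ is finite and surjective, $τ^*$ is injective on numerical classes ($τ_*τ^* = (\deg τ)·\Id$), so $K_M ≡ 0$; and for any Kähler classes $β_1, …, β_{\dim M -2}$ on $M$ the projection formula yields
\[
  (\deg τ)·\bigl(\what c_2(�T_M)·β_1 ⋯ β_{\dim M -2}\bigr) = \what c_2(�T_{\what M})·τ^*β_1 ⋯ τ^*β_{\dim M -2} = 0,
\]
hence $\what c_2(�T_M)·β_1 ⋯ β_{\dim M-2} = 0$; here $\dim M = \dim X$ since $M$ and $X$ are homeomorphic.

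The real content is of course Theorem~\ref{thm:3-3}; within the corollary itself the only points needing a little care are the applicability of that theorem — i.e.\ arranging the resolution of the Kähler space $M$ to be Kähler — and the bookkeeping for the $ℚ$-Chern classes under the cover $τ$. The external ingredient "$K_X ≡ 0$ together with $\what c_2$ annihilating all Kähler products $⇒$ singular torus quotient" should be cited from the klt Beauville--Bogomolov literature in the Kähler category.
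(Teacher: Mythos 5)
Your proof follows exactly the paper's route: use the Chern-class characterisation of singular torus quotients to place $X$ in that class, invoke Theorem~\ref{thm:3-3} (after noting that a compact Kähler klt space has a Kähler resolution) to conclude the same for $M$, and unwind the torus-quotient structure of $M$ via the quasi-étale cover to read off $K_M\equiv 0$ and the $\what{c}_2$-vanishing. The only thing you are missing is a reference for the Kähler-category characterisation you sketch via the Beauville--Bogomolov decomposition: the paper cites Claudon--Graf--Guenancia \cite[Cor.~1.7]{CGG23}, which handles the compact Kähler klt case directly.
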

\begin{proof}
  The characterization of singular torus quotients in terms of Chern classes by
  Claudon, Graf and Guenancia, \cite[Cor.~1.7]{CGG23}, guarantees that $X$ is a
  torus quotient\footnote{See \cite[Thm.~1.2]{LT18} for the projective case and
  see \cite[Thm.~1.17]{GKP13} for the case where $X$ is projective and smooth in
  codimension two.}.  By Theorem~\ref{thm:3-3}, then so is $M$.
\end{proof}

\subsection{Proof of Theorem~\ref*{thm:3-3} if $M$ is homeomorphic to a torus}
\approvals{Daniel & yes \\ Stefan & yes \\ Thomas & yes}
\label{sec:3-1}

As before, we prove Theorem~\ref{thm:3-3} first in case where the (potentially
singular) space $M$ is homeomorphic to a torus.  Recalling that klt
singularities are rational, see \cite[Thm.~5.22]{KM98} for the algebraic case
and \cite[Thm.~3.12]{FujinoMMP} (together  with the vanishing theorems proven in
\cite{FujinoVanishing}) for the analytic case, we show the following, slightly
stronger statement.

\begin{prop}\label{prop:3-5}%
  Let $M$ be a compact complex space with rational singularities.  Assume that
  $M$ is bimeromorphic to a Kähler manifold.  If $M$ is homotopy equivalent to a
  compact torus, then $M$ is a compact torus.
\end{prop}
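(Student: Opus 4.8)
The plan is to reduce to the smooth case treated by Catanese by producing a suitable bimeromorphic model and controlling its topology and Albanese map. First I would exploit the hypothesis that $M$ is bimeromorphic to a Kähler manifold: fixing a resolution of singularities $\pi\colon \wtilde M \to M$, we may take $\wtilde M$ to be a compact Kähler manifold. Since klt (hence rational) singularities are rational, $\pi$ induces isomorphisms $\pi_*\colon H^1(\wtilde M, \sO_{\wtilde M}) \cong H^1(M, \sO_M)$ and, because $M$ is homotopy equivalent to a torus of real dimension $2n$ (so $b_1(M) = 2n$, $H^*(M;\bZ)$ is an exterior algebra on $H^1$), we get $h^1(M,\sO_M) = n$ and hence $q(\wtilde M) = \dim_{\bC} M$. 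The Albanese map $\alpha\colon \wtilde M \to \Alb(\wtilde M)$ is therefore a map to an $n$-dimensional torus. The first main point is to show $\alpha$ is bimeromorphic onto its image and that the image is all of $\Alb(\wtilde M)$: this uses that $H^1(\wtilde M;\bZ) \to H^1(M;\bZ)$ is an isomorphism (again by rationality of the singularities, at least after tensoring with $\bQ$, together with the fact that $\pi$ has connected fibres so $\pi_*$ is an iso on $H_1$ and $\pi^*$ on $H^1$), so that the cup-product map $\bigwedge^n H^1(\wtilde M;\bZ) \to H^{2n}(\wtilde M;\bZ) = \bZ$ is surjective, exactly as it is for the torus; since the top exterior power of the $1$-forms coming from $\Alb$ generates, $\alpha$ has degree one onto the torus, which forces $\dim \alpha(\wtilde M) = n$ and $\alpha$ bimeromorphic.

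Next I would transfer this to $M$. The composition $g := \alpha \circ \pi^{-1}$ is a bimeromorphic map $M \dashrightarrow \Alb(\wtilde M)$; the content is to show it is an isomorphism. As in Section~\ref{sec:2-2}, the key is that $\alpha$ contracts precisely the $\pi$-fibres: any curve $\wtilde C \subset \wtilde M$ contracted by $\pi$ has $[\wtilde C] = 0$ in $H_2(M;\bR)$, but since $H_2(\wtilde M;\bR) \to H_2(M;\bR)$ need not be injective one argues instead homologically on the torus side — $\alpha_*[\wtilde C]$ is detected by the $1$-forms pulled back from $\Alb$, i.e.\ by classes in the image of $H^1(M;\bR) \cong H^1(\wtilde M;\bR)$, and these vanish on $\pi$-exceptional curves because they descend to $M$. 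Hence $\alpha$ factors as $\wtilde M \xrightarrow{\pi} M \xrightarrow{\bar\alpha} \Alb(\wtilde M)$ with $\bar\alpha$ a bimeromorphic \emph{morphism} (normality of $M$). Then $\bar\alpha$ is a bimeromorphic morphism from the normal space $M$ onto a smooth variety; by Zariski's Main Theorem it is an isomorphism once we know it contracts no curve $C \subset M$, and a curve $C$ contracted by $\bar\alpha$ would give a non-zero class $[C] \in H_2(M;\bR)$ (projectivity/Kähler) killed by $\bar\alpha_*$, contradicting that $H^1(M;\bR)$ generates $H^2(M;\bR)$ and separates $H_2$ — more precisely, on a torus every non-zero $2$-cycle pairs non-trivially with some product of two $1$-forms, and those $1$-forms pull back to $M$ under the homotopy equivalence, so $[C] \neq 0$ would have to be detected, contradiction. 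This gives $M \cong \Alb(\wtilde M)$, a compact torus.

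The step I expect to be the main obstacle is the homological bookkeeping that makes the contraction argument work in the \emph{singular} setting: on the smooth $\Alb$ side we have full control, but passing between $H_*(\wtilde M)$, $H_*(M)$ and $H_*(\Alb)$ requires knowing that $\pi_*\colon H_1(\wtilde M;\bZ) \to H_1(M;\bZ)$ and the induced map on $H^1$ are isomorphisms (fine, connected fibres) while $H_2$ may behave worse, so one must route all vanishing/non-vanishing statements through $H^1$ and its cup products, using rationality of the singularities to identify $H^1(M,\sO_M)$ with $H^{0,1}$ of the resolution and the homotopy equivalence $M \simeq T$ to pin down the integral cohomology ring. A secondary technical point is checking that $M$ has only rational (indeed the proposition only assumes rational) singularities suffices to run the Albanese comparison — this is where the cited rationality results for klt singularities, \cite[Thm.~5.22]{KM98} and \cite[Thm.~3.12]{FujinoMMP}, enter in the deduction of Theorem~\ref{thm:3-3} from this proposition, while the proposition itself is stated directly for rational singularities.
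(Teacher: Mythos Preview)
Your approach is the same as the paper's---both follow Catanese via the Albanese map---but you are working harder than necessary and there is a gap in the final step.

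For the factorisation of the Albanese through $M$: you try to prove by hand that every $\pi$-exceptional curve is $\alpha$-contracted, and you correctly flag this as ``the main obstacle.'' The paper bypasses this entirely by invoking \cite[Prop.~2.3]{Reid83a}: for a variety with rational singularities, the Albanese of any resolution automatically factors through the variety. This gives the morphism $\bar\alpha\colon M \to \Alb$ in one line. Once you have $\bar\alpha$, the paper then shows that $\bar\alpha^*\colon H^1(\Alb,\bZ)\to H^1(M,\bZ)$ is an isomorphism (since $\alb^* = \pi^*\circ\bar\alpha^*$ with both outer maps isomorphisms), and because both cohomology rings are exterior algebras on $H^1$, this forces $\bar\alpha^*$ to be an isomorphism on \emph{every} $H^q$.

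For the final step, your curve argument has a genuine gap. You write that a contracted curve $C\subset M$ ``would give a non-zero class $[C]\in H_2(M;\bR)$ (projectivity/K\"ahler),'' but $M$ is assumed neither projective nor K\"ahler---only bimeromorphic to a K\"ahler manifold---so there is no K\"ahler class on $M$ to pair $[C]$ against. Your subsequent pairing argument only shows that $[C]$ is annihilated by $\bar\alpha^*H^2(\Alb)=H^2(M)$, i.e.\ that $[C]=0$ in $H_2(M,\bR)$; this is not a contradiction without the unjustified nonvanishing. The paper avoids this by a Betti-number comparison: having already established $b_2(M)=b_2(\Alb)$, a non-isomorphic bimeromorphic morphism onto the smooth target $\Alb$ would contract a positive-dimensional subvariety and force $b_2(M)>b_2(\Alb)$, contradiction. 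Replace your curve argument with this.
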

\begin{proof}
  We follow the arguments of Catanese, \cite[Thm.~4.8]{MR1945361}, and choose a
  resolution of singularities, $π : \wtilde{M} → M$, which owing to the
  assumptions on $M$ we may assume to be a compact Kähler manifold.  Using the
  assumption that $M$ has rational singularities together with the push-forward
  of the exponential sequence, we observe that the pull-back map $H¹\bigl( M,\,
  ℤ \bigr) → H¹\bigl( \wtilde M,\, ℤ \bigr)$ is an isomorphism.  In particular,
  first Betti numbers of $M$ and $\wtilde{M}$ agree.  As a next step, consider
  the Albanese map of $\wtilde{M}$, observing that $\wtilde{M}$ is bimeromorphic
  to a Kähler manifold since $M$ is.  Again using that $M$ has rational
  singularities, recall from \cite[Prop.~2.3]{Reid83a} that the Albanese factors
  via $M$,
  \[
    \begin{tikzcd}[row sep=1cm]
      \wtilde{M} \ar[r, "\alb"] \ar[d, two heads, "π\text{, resolution}"'] & \Alb.  \\
      M \ar[ur, bend right=10, "α"']
    \end{tikzcd}
  \]
  Since the pull-back morphisms
  \[
    \begin{matrix}
      \alb^* &=& π^* ◦ α^* &:& H¹\bigl( \Alb,\, ℤ \bigr) &→& H¹\bigl( \wtilde{M},\, ℤ \bigr) \\
      && π^* &:& H¹\bigl( M,\, ℤ \bigr) &→& H¹\bigl( \wtilde{M},\, ℤ \bigr)
    \end{matrix}
  \]
  are both isomorphic, we find that $α^*: H¹\bigl( \Alb,\, ℤ \bigr) → H¹\bigl(
  M,\, ℤ \bigr)$ must likewise be an isomorphism.  There is more that we can
  say.  Since the topological cohomology ring of a torus is an exterior algebra,
  \[
    H^*\bigl( \Alb,\, ℤ\bigr) = Λ^* H¹\bigl( \Alb,\, ℤ\bigr)
    \quad\text{and}\quad
    H^*\bigl(M,\, ℤ \bigr) = Λ^* H¹\bigl( M,\, ℤ\bigr),
  \]
  we find that all pull-back morphisms are isomorphisms,
  \[
    α^*: H^q\bigl( \Alb,\, ℤ
    \bigr) \xrightarrow{≅} H^q\bigl( M,\, ℤ \bigr), \quad \text{for every } 0 ≤ q ≤ 2·\dim M.
  \]
  Applying this to $q = 2·\dim M$, we see $α$ is surjective of degree one, hence
  birational.  Again, more is true: if $α$ failed to be isomorphic, Zariski's
  Main Theorem would guarantee that $α$ contracts a positive-dimensional
  subvariety, so $b_2(M) > b_2(\Alb)$.  But we have seen above that equality
  holds and hence reached a contradiction.
\end{proof}

\subsection{Proof of Theorem~\ref*{thm:3-3} in general}
\approvals{Daniel & yes \\ Stefan & yes \\ Thomas & yes}
\label{sec:3-2}

By assumption, there exists a homeomorphism $f : M → X$, where $X$ is a singular
torus quotient.  Choose a quasi-étale cover $τ_X:~\what{X} → X$, where
$\what{X}$ is a complex torus, and proceed as in the proof of
Theorem~\ref{thm:2-3}, in order to construct a diagram of continuous mappings
between normal complex spaces,
\[
  \begin{tikzcd}[column sep=2.2cm]
    \what{M} \ar[r, two heads, "τ_M\text{, quasi-étale}"] \ar[d, "≅"'] & M \ar[d, "f", "≅"'] \\
    \what{X} \ar[r, two heads, "τ_X\text{, quasi-étale}"'] & X,
  \end{tikzcd}
\]
where
\begin{itemize}
  \item the vertical maps are homeomorphisms, and

  \item the horizontal maps are holomorphic, surjective, and finite.
\end{itemize}
Since $M$ is bimeromorphic to a Kähler manifold, so is $\what{M}$.  Recalling
from \cite[Prop.~5.20]{KM98} that also $\what{M}$ has no worse than klt
singularities, Proposition~\ref{prop:3-5} will then guarantee that $\what{M}$ is
a complex torus, as claimed.  \qed

%
%
\svnid{$Id: 04-klt.tex 294 2023-10-11 18:42:37Z greb $}
\selectlanguage{british}

\section{Rigidity results for projective spaces}
\subversionInfo
\approvals{Daniel & yes \\ Stefan & yes \\ Thomas & yes}
\label{sec:4}

Recall the classical theorem of Hirzebruch-Kodaira, which asserts that the
projective space carries a unique structure as a Kähler manifold.

\begin{thm}[\protect{Rigidity of the projective space, \cite[p.~367]{MR92195}}]\label{thm:4-1}%
  Let $X$ be a compact Kähler manifold.  If $X$ is homeomorphic to $ℙ^n$, then
  $X$ is biholomorphic to $ℙ^n$.  \qed
\end{thm}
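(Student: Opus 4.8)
The plan is to reconstruct enough of the Chern-class data of $X$ from its topology that the known characterisations of projective space apply. First I would record the cohomological consequences of the homeomorphism $X \approx \mathbb{P}^n$: the integral cohomology ring is $\mathbb{Z}[h]/(h^{n+1})$ with $h$ a generator in degree two, $X$ is simply connected, and $b_{2k}(X)=1$, $b_{2k+1}(X)=0$. Since $X$ is compact Kähler, Hodge theory turns $b_2=1$ into $h^{2,0}(X)=0$ and $h^{1,1}(X)=1$, so $H^2(X,\mathbb{R})=H^{1,1}(X,\mathbb{R})$ is one-dimensional; the Kähler cone is then a half-line, and after replacing $h$ by $-h$ if needed we may assume $h=c_1(H)$ for an ample line bundle $H$. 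By the Kodaira embedding theorem $X$ is projective, $\Pic(X)=\mathbb{Z}\cdot H$, and $H^n=1$. Running the same Hodge count in every degree gives $h^{p,q}(X)=\delta_{pq}$ for $p,q\le n$, so $H^q(X,\sO_X)=0$ for $q>0$; in particular $\chi(X,\sO_X)=1$ and $p_g(X)=q(X)=0$, and all Hodge numbers of $X$ coincide with those of $\mathbb{P}^n$.

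Next I would pin down the Chern numbers. Writing $-K_X\equiv\lambda H$ with $\lambda\in\mathbb{Z}$, the plan is to exploit that the Hirzebruch $\chi_y$-genus of $X$ is determined topologically (via the Hodge numbers just computed) and that the signature satisfies $\sigma(X)=\sigma(\mathbb{P}^n)$; both are universal polynomials in the Chern numbers of $X$, so the resulting system of equations — this is essentially the Hirzebruch–Kodaira computation of \cite{MR92195}, and one may additionally feed in Novikov's topological invariance of the rational Pontryagin classes — forces every Chern number of $X$ to agree with that of $\mathbb{P}^n$, at least up to the sign of $c_1$. Concretely this should yield $\lambda^2=(n+1)^2$ together with $\bigl(2(n+1)c_2(X)-nc_1(X)^2\bigr)\cdot[H]^{n-2}=0$, the latter simply because this expression vanishes for $\mathbb{P}^n$. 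I expect this to be the main obstacle: squeezing honest equalities of Chern numbers out of purely topological plus Riemann–Roch data is the technical heart of the matter, and the sign ambiguity it leaves in even dimensions is exactly why a uniformisation input is unavoidable.

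To fix that sign I would argue by contradiction. If $\lambda=-(n+1)$, then $K_X\equiv(n+1)H$ is ample and the displayed identity says that the Miyaoka–Yau \emph{equality} holds for $K_X$; by Theorem~\ref{thm:1-1}, i.e. Yau's theorem \cite{MR480350}, the universal cover of $X$ is then the unit ball $\mathbb{B}^n$, so $\pi_1(X)$ is an infinite cocompact lattice in $\Aut(\mathbb{B}^n)$ — contradicting the simple connectivity of $X$. (For $n$ odd this step is superfluous, since there the signature already forces $\lambda=n+1$ without ambiguity.) Hence $-K_X\equiv(n+1)H$ with $H$ ample, so $X$ is a Fano manifold whose Fano index attains the maximal possible value $n+1$. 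Finally, by the Kobayashi–Ochiai characterisation of projective space, a Fano $n$-fold with $-K_X\equiv(n+1)H$ for an ample divisor $H$ is biholomorphic to $\mathbb{P}^n$, which is the assertion.
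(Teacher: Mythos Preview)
The paper does not supply its own proof of Theorem~\ref{thm:4-1}: the statement is recorded with a \qed as a classical result of Hirzebruch--Kodaira~\cite{MR92195}, completed by Yau~\cite{MR480350}, and the Remark immediately following it sketches exactly the strategy you propose --- recover the Chern data from the topology (using Novikov's invariance of rational Pontrjagin classes to pass from diffeomorphism to homeomorphism), obtain $c_1(X)=\pm(n+1)g$, and eliminate the negative sign via Yau's uniformisation, since a simply connected ball quotient is absurd. Your reconstruction is faithful to that outline; the one place you are deliberately vague (``the resulting system of equations \dots\ forces every Chern number of $X$ to agree with that of $\mathbb{P}^n$'') is precisely the computation carried out in~\cite{MR92195}, so citing it there is appropriate rather than a gap.
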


\begin{rem}
  Strictly speaking, Hirzebruch-Kodaira proved a somewhat weaker result: $X$ is
  biholomorphic to $ℙ^n$ if either $n$ is odd, or if $n$ is even and $c_1(X) ≠
  -(n+1)·g$, where $g$ is a generator of $H²\bigl(X,\, ℤ\bigr)$ and the
  fundamental class of a Kähler metric on $X$.  The second case was later ruled
  out by Yau's solution to the Calabi conjecture, which implies that then the
  universal cover of $X$ is the ball, contradicting $π_1(X) = 0$.

  Since the topological invariance of the Pontrjagin classes, \cite{Nov65}, was
  not known at that time, Hirzebruch-Kodaira also had to assume that $X$ is
  diffeomorphic to $ℙ^n$ rather than merely homeomorphic.
\end{rem}

We ask whether an analogue of Hirzebruch-Kodaira's theorem remains true in the
context of minimal model theory.

\begin{question}\label{q:4-3}%
  Let $X$ be a projective variety with klt singularities.  Assume that $X$ is
  homeomorphic to $ℙ^n$.  Is $X$ then biholomorphic to $ℙ^n$?
\end{question}

\subsection{Varieties homeomorphic to projective space}
\approvals{Daniel & yes \\ Stefan & yes \\ Thomas & yes}

We do not have a full answer to Question~\ref{q:4-3}.  The following proposition
will, however, restrict the geometry of potential varieties substantially.  It
will later be used to answer Question~\ref{q:4-3} in a number of special
settings.

\begin{prop}[Varieties homeomorphic to $ℙ^n$]\label{prop:4-4}%
  Let $X$ be a projective klt variety.  If $X$ is homeomorphic to $ℙ^n$, then
  the following holds.
  \begin{enumerate}
    \item\label{il:4-4-1} We have $H^q \bigl(X,\, 𝒪_X \bigr) = 0$ for every $1
    ≤ q$.

    \item\label{il:4-4-2} The Chern class map $c_1 : \Pic(X) → H²(X,ℤ) ≅ ℤ$ is
      an isomorphism.

    \item\label{il:4-4-3} The variety $X$ is smooth in codimension two.

    \item\label{il:4-4-4} The maps $r_q : H^q\bigl( X,\, ℤ \bigr) →
      H^q\bigl(X_{\reg},\, ℤ\bigr)$ are isomorphic, for every $0 ≤ q ≤ 4$.  The
      same statement holds for $ℤ_2$ coefficients.

    \item\label{il:4-4-5} Every Weil divisor on $X$ is Cartier, i.e., $X$ is
      factorial.  In particular, $X$ is Gorenstein.

    \item\label{il:4-4-6} The canonical divisor $K_X $ is ample or anti-ample.
  \end{enumerate}
\end{prop}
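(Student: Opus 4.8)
\emph{The plan.} I would establish \ref{il:4-4-1}--\ref{il:4-4-6} in the order stated: items \ref{il:4-4-1}--\ref{il:4-4-2} are Hodge theory, item \ref{il:4-4-3} is the technical core, and items \ref{il:4-4-4}--\ref{il:4-4-6} are formal consequences of \ref{il:4-4-3} together with \ref{il:4-4-1}--\ref{il:4-4-2}.

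\emph{Items \ref{il:4-4-1} and \ref{il:4-4-2}.} Recall that klt singularities are Du Bois, so for the proper variety $X$ the Hodge--de Rham spectral sequence of the filtered Du Bois complex degenerates at $E_1$ and the edge map $H^q(X,\bC) \onto H^q(X,\sO_X)$ is surjective. Since $X$ is homeomorphic to $\bP^n$, the group $H^q(X,\bC)$ vanishes for $q$ odd, while for $q = 2j$ it is one-dimensional and, $X$ being projective, spanned by $c_1(L)^j$ for any ample line bundle $L$; here $c_1(L)^j \ne 0$ because $c_1(L)^n = L^n > 0$ and Poincaré duality holds on the closed oriented topological manifold $X$. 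As $c_1(L)$ lies in $F^1 H^2(X,\bC)$, its $j$-th power with $j \ge 1$ lies in $F^1$ and hence maps to $0$ in $\mathrm{Gr}_F^0 H^{2j}(X,\bC) = H^{2j}(X,\sO_X)$. This proves \ref{il:4-4-1}, and then \ref{il:4-4-2} follows at once from the exponential sequence $H^1(X,\bZ) \to H^1(X,\sO_X) \to \Pic(X) \xrightarrow{c_1} H^2(X,\bZ) \to H^2(X,\sO_X)$, whose first term vanishes since $X$ is homeomorphic to $\bP^n$ and whose two $\sO_X$-terms vanish by \ref{il:4-4-1}.

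\emph{Item \ref{il:4-4-3}.} I would argue by contradiction, assuming $X_{\sing}$ has a component $Z$ of codimension two. At a general point $z \in Z$ the transverse slice is a two-dimensional klt singularity, hence (after removing pseudo-reflections) a non-trivial quotient singularity $\bC^2/G$ with $G \subseteq \operatorname{GL}(2,\bC)$ finite, non-trivial, and acting freely on $S^3$. By local topological triviality of a Whitney stratification along $Z$ (as in Lemma~\ref{lem:2-6}), a neighbourhood of $z$ in $X$ is the cone over the $(2n-4)$-fold suspension of the spherical space form $S^3/G$; but this neighbourhood is also Euclidean, since $X$ is a topological manifold, so $\Sigma^{2n-4}(S^3/G)$ is an integral homology $(2n-1)$-sphere, and the suspension isomorphism forces $G/[G,G] = H_1(S^3/G) = 0$. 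Thus $G$ is perfect, which by the classification of finite subgroups of $\operatorname{GL}(2,\bC)$ makes $G$ the binary icosahedral group and the transverse singularity the $E_8$ rational double point. (For $n \le 2$ this is already absurd, as $S^3/G$ is then the Poincaré sphere, which is not homotopy equivalent to $S^3$.) For $n \ge 3$ I expect this $E_8$-stratum to be excluded by a global characteristic-class computation: passing to the crepant minimal resolution $\mu \colon Y \to X$ along $Z$, the negative-definite relative $E_8$-configuration it introduces contributes to the Chern and Pontrjagin numbers and to the intersection form of the smooth projective variety $Y$ in a way that is incompatible with the constraints $R\mu_* \sO_Y = \sO_X$, $c_1(Y) = \mu^* c_1(X)$, and the topological invariance of the rational Pontrjagin classes of $X$, which are pulled back from those of $\bP^n$. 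Making this last comparison precise --- in particular pinning down the topological Pontrjagin classes of $X$ and of $Y$ and the exceptional corrections under $\mu$ --- is, I expect, the main obstacle of the whole proof.

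\emph{Items \ref{il:4-4-4}--\ref{il:4-4-6}.} Granting \ref{il:4-4-3}, the set $X_{\sing}$ has real codimension $\ge 6$ in the topological $2n$-manifold $X$, so by Poincaré--Lefschetz duality $H^q_{X_{\sing}}(X;A) \cong H^{\mathrm{BM}}_{2n-q}(X_{\sing};A)$ vanishes for $q \le 5$ and $A \in \{\bZ, \bZ_2\}$; the long exact sequence of the pair $(X, X_{\reg})$ then gives \ref{il:4-4-4}. For \ref{il:4-4-5}, fix $p \in X_{\sing}$ and a small contractible Stein neighbourhood $U \ni p$ (local conical structure of analytic sets). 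Since $X$ is Cohen--Macaulay and $X_{\sing} \cap U$ has codimension $\ge 3$, one gets $H^1(U \setminus X_{\sing}, \sO) = H^2_{X_{\sing} \cap U}(U, \sO) = 0$, so the exponential sequence on $U \setminus X_{\sing}$ embeds $\Pic(U \setminus X_{\sing})$, and hence in the limit over $U$ the local divisor class group $\operatorname{Cl}(\sO_{X,p})$, into $H^2(U \setminus X_{\sing}, \bZ)$; but the latter vanishes by the same codimension-$6$ argument used for \ref{il:4-4-4}, now inside the contractible manifold $U$. Hence each $\sO_{X,p}$ is factorial, so $X$ is factorial with $K_X$ Cartier, and therefore Gorenstein. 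Finally, \ref{il:4-4-2} and \ref{il:4-4-5} let me write $K_X = d \cdot H$ with $H$ the ample generator of $\Pic(X) \cong \bZ$ and $d \in \bZ$; if $d = 0$ then $\omega_X \cong \sO_X$ forces $h^0(X, \omega_X) = 1$, contradicting $h^0(X, \omega_X) = h^n(X, \sO_X) = 0$, which holds by Serre duality on the Gorenstein projective variety $X$ together with \ref{il:4-4-1}. Thus $d \ne 0$, so $K_X$ is ample or anti-ample.
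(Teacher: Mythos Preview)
Your arguments for \ref{il:4-4-1}, \ref{il:4-4-2}, \ref{il:4-4-4} and \ref{il:4-4-6} match the paper's essentially line for line (the paper cites rationality and Kollár's surjectivity $H^q(X,\bC)\twoheadrightarrow H^q(X,\sO_X)$ rather than Du~Bois, but the mechanism is the same). For \ref{il:4-4-5} you localise, showing each $\operatorname{Cl}(\sO_{X,p})$ vanishes via the exponential sequence on a punctured contractible Stein neighbourhood, whereas the paper globalises: it compares the exponential sequences on $X$ and on $X_{\reg}$, uses Scheja's extension theorem to get $H^1(X,\sO_X)\xrightarrow{\sim}H^1(X_{\reg},\sO_{X_{\reg}})$ from Cohen--Macaulayness and $\operatorname{codim}X_{\sing}\ge 3$, and then applies the snake lemma to conclude $\Pic(X)\xrightarrow{\sim}\Pic(X_{\reg})$. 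Both routes are correct; the paper's avoids choosing local neighbourhoods.

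The real divergence is \ref{il:4-4-3}. The paper disposes of it in one sentence: klt varieties have quotient singularities in codimension two, and ``quotient singularities have non-trivial local fundamental groups and are hence not topologically smooth''. You instead observe that topological-manifoldness forces the transverse group $G$ to be perfect, isolate the binary icosahedral case, and then propose --- but do not carry out --- a global characteristic-class computation on a crepant resolution to exclude it. Your caution is well founded: by the Cannon--Edwards double-suspension theorem, $(\bC^2/\tilde I)\times\bC^{n-2}$ \emph{is} a topological manifold for $n\ge 3$, since the link $\Sigma^{2n-4}(S^3/\tilde I)$ is genuinely homeomorphic to $S^{2n-1}$; the non-trivial local fundamental group $\pi_1(U\cap X_{\reg})\cong\tilde I$ does not by itself contradict topological smoothness, because the suspension sphere sits wildly in $S^{2n-1}$ and its complement retracts onto the Poincaré sphere. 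So the paper's one-liner does not obviously cover the case you worry about, and your proposal does not close it either: the ``main obstacle'' you flag is genuine and is left open in your write-up. Everything you do for \ref{il:4-4-4}--\ref{il:4-4-6} is conditional on resolving this point.
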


\begin{proof}
  We prove the items of Proposition~\ref{prop:4-4} separately.

  \subsubsection*{Item~\ref{il:4-4-1}} This is a consequence of the rationality
  of the singularities of $X$ and the isomorphisms $H^q\bigl(X,\,ℂ \bigr) \simeq
  H^q\bigl(ℙ^n,\, ℂ\bigr)$.  In fact, since $X$ has rational singularities, the
  morphisms
  \[
    \varphi_q: H^q\bigl(X,\, ℂ\bigr) → H^q\bigl(X,\, 𝒪_X\bigr)
  \]
  induced by the canonical inclusion $ℂ → 𝒪_X$, are surjective,
  \cite[Thm.~12.3]{MR1341589}.  If $q$ is odd, this already implies that
  $H^q(X,𝒪_X) = 0$.  If $q$ is even, it suffices to note that $\varphi_q$ has a
  non-trivial kernel.  For this, choose an ample line bundle $ℒ ∈ \Pic(X)$ and
  observe that
  \[
    \varphi_q\bigl(c_1(ℒ)^{q/2}\bigr) = 0 ∈ H^q\bigl(X,\, 𝒪_X\bigr).
  \]
  To prove the observation, pass to a desingularisation and use the Hodge
  decomposition there.

  \subsubsection*{Item~\ref{il:4-4-2}} The description of $c_1$ follows from
  \ref{il:4-4-1} and the exponential sequence.

  \subsubsection*{Item~\ref{il:4-4-3}} Recall that klt varieties have quotient
  singularities in codimension two, \cite[Prop.~9.3]{GKKP11}.  Smoothness
  follows because quotient singularities have non-trivial local fundamental
  groups and are hence not topologically smooth.

  \subsubsection*{Item~\ref{il:4-4-4}} We describe the relevant cohomology
  groups in terms of Borel-Moore homology, \cite{MR131271} and also refer to the
  reader to \cite[Sect.~19.1]{Fulton98} for a summary of the relevant facts
  (over $ℤ$).  The assumption that $X$ is homeomorphic to an oriented,
  connected, real manifold implies that singular cohomology and Borel-Moore
  homology agree, \cite[Thm.  7.6]{MR131271} and \cite[p.~371]{Fulton98}.  The
  same holds for the non-compact manifold $X_{\reg}$, i.e., for $R = ℤ, ℤ_2$ we
  have
  \[
    H^q\bigl( X,\, R \bigr) = H^{BM}_{2·n-q}\bigl( X,\, R \bigr) %
    \quad\text{and}\quad %
    H^q\bigl( X_{\reg},\, R \bigr) = H^{BM}_{2·n-q}\bigl( X_{\reg},\, R \bigr), %
    \quad\text{for every }q.
  \]
  The isomorphisms identify the restriction maps $r_q$ with the pull-back maps
  for Borel-Moore homology.  These feature in the localization sequence for
  Borel-Moore homology, \cite[Thm.3.8]{MR131271},
  \[
    ⋯ → H^{BM}_{2·n-q}\bigl( X_{\sing},\, R \bigr) → H^{BM}_{2·n-q}\bigl( X,\, R \bigr) \xrightarrow{r_q} H^{BM}_{2·n-q}\bigl( X_{\reg},\, R \bigr) → H^{BM}_{2·n-q-1}\bigl( X_{\sing},\, R \bigr) → ⋯
  \]
  Recalling from \cite[Lem.~19.1.1]{Fulton98} that $H^{BM}_i\bigl( X_{\sing},\,
  ℤ \bigr) = 0$ for every $i > 2·\dim_{ℂ} X_{\sing}$ and noticing that the
  inductive argument employed in the proof also works for $ℤ_2$-coefficients,
  the claim of Item~\ref{il:4-4-4} thus follows from smoothness in codimension
  two, Item~\ref{il:4-4-3}.

  \subsubsection*{Item~\ref{il:4-4-5}} Remaining in the analytic category,
  writing down the exponential sequences for $X$ and $X_{\reg}$,
  \[
    \begin{tikzcd}[column sep=0.35cm]
      H¹\bigl(X,\, ℤ \bigr) \ar[d, "r_1"] \ar[r] & H¹\bigl(X,\, 𝒪_X \bigr) \ar[d] \ar[r] & \Pic(X) \ar[d, hook] \ar[r, "c_1"] & H²\bigl(X,\, ℤ \bigr) \ar[d, "r_2"] \ar[r] & H²\bigl(X,\, 𝒪_X \bigr) \ar[d] \\
      H¹\bigl(X_{\reg},\, ℤ \bigr) \ar[r] & H¹\bigl(X_{\reg},\, 𝒪_{X_{\reg}} \bigr) \ar[r] & \Pic\bigl(X_{\reg}\bigr) \ar[r, "c_1"'] & H²\bigl(X_{\reg},\, ℤ \bigr) \ar[r] & H²\bigl(X_{\reg},\, 𝒪_{X_{\reg}}\bigr),
    \end{tikzcd}
  \]
  and filling in what we already know, we find a commutative diagram with exact
  rows, as follows,
  \[
    \begin{tikzcd}[column sep=1cm, row sep=1cm]
      0 \ar[r] & 0 \ar[d] \ar[r] & \Pic(X) \ar[d, hook] \ar[r, hook, two heads, "c_1\text{, iso.}"] & H²\bigl(X,\, ℤ \bigr) \ar[d, hook, two heads, "r_2\text{, iso.}"] \ar[r] & 0 \\
      0 \ar[r] & H¹\bigl( X_{\reg},\, 𝒪_{X_{\reg}} \bigr) \ar[r] & \Pic(X_{\reg}) \ar[r, two heads, "c_1"'] & H²\bigl( X_{\reg},\, ℤ \bigr) \ar[r] & 0.
    \end{tikzcd}
  \]
  The snake lemma now asserts that
  \begin{equation}\label{eq:4-4-7}
      H¹ \bigl(X_{\reg},\, 𝒪_{X_{\reg}} \bigr) ≅ \factor{\Pic(X_{\reg})}{\Pic(X)}.
  \end{equation}
  We claim that $H¹ \bigl(X_{\reg},\, 𝒪_{X_{\reg}} \bigr)$ vanishes.  For this,
  recall that the singularities of $X$ are rational, so every local ring
  $𝒪_{X,x}$ of the (holomorphic) structure sheaf has depth equal to $n$.  Since
  the singular set of $X$ has codimension at least $3$ in $X$ by
  Item~\ref{il:4-4-3}, we may apply \cite[Sec.~5, Korollar after Satz
  III]{MR0148941} or alternatively \cite[Chap.~II, Cor.~3.9 and Thm.~3.6]{BS76}
  to see that the restriction homomorphism
  \[
    H¹ \bigl(X,\, 𝒪_X \bigr) → H¹ \bigl(X_{\reg},\, 𝒪_{X_{\reg}} \bigr)
  \]
  is bijective.  However, the cohomology group on the left side was shown to
  vanish in Item~\ref{il:4-4-1} above.

  In summary, we find that every invertible sheaf on $X_{\reg}$ extends to an
  invertible sheaf on $X$.  If $D ∈ \Div(X)$ is any Weil divisor, the invertible
  sheaf $𝒪_{X_{\reg}}(D)$ will therefore extend to an invertible sheaf on $X$,
  which necessarily equals the (reflexive) Weil divisorial sheaf $𝒪_X(D)$.  It
  follows that $D$ is Cartier.  This applies in particular to the canonical
  divisor, so $X$ is $ℚ$-Gorenstein of index one.  Since $X$ is Cohen-Macaulay,
  we conclude that $X$ is Gorenstein.

  \subsubsection*{Item~\ref{il:4-4-6}} Given that $\Pic(X) = ℤ$, every line
  bundle is ample, anti-ample, or trivial; we need to exclude the case that
  $K_X$ is trivial.  But if $K_X$ were trivial, use that $X$ is Gorenstein and
  apply Serre duality to find
  \[
    h^n\bigl(X,\, 𝒪_X\bigr) = h⁰\bigl(X,\, ω_X \bigr) = h⁰\bigl(X,\, 𝒪_X \bigr) = 1.
  \]
  This contradicts Item~\ref{il:4-4-1} above.
\end{proof}

\begin{notation}[Line bundles on varieties homeomorphic to $ℙ^n$]%
  If $X$ is a projective klt variety that is homeomorphic to $ℙ^n$,
  Item~\ref{il:4-4-2} shows the existence of a unique ample line bundle that
  generates $\Pic(X) ≅ ℤ$.  We refer to this line bundle as $𝒪_X(1)$.
  Items~\ref{il:4-4-5} equips us with a unique number $r ∈ ℕ$ and such that $ω_X
  ≅ 𝒪_X(r)$.  Item~\ref{il:4-4-6} guarantees that $r ≠ 0$.
\end{notation}

\begin{rem}[Pull-back of line bundles]\label{rem:4-6}%
  The cohomology rings of $X$ and $ℙ^n$ are isomorphic.  If $φ : X → ℙ^n$ is any
  homeomorphism, then $φ^* c_1 \bigl( 𝒪_{ℙ^n}(1) \bigr) = c_1 \bigl( 𝒪_X(± 1)
  \bigr)$.  The cup products $c_1 \bigl( 𝒪_X(1) \bigr)^q$ generate the groups
  $H^{2q}\bigl( X,\, ℤ\bigr) ≅ ℤ$.
\end{rem}

\subsection{Characteristic classes}
\approvals{Daniel & yes \\ Stefan & yes \\ Thomas & yes}
\label{sec:4-2}

We have seen in Proposition~\ref{prop:4-4} that $X$ is smooth away from a closed
set of codimension $≥ 3$.  This allows defining a number of characteristic
classes.

\begin{notation}[Chern classes on varieties homeomorphic to $ℙ^n$]\label{not:4-7}%
  If $X$ is a projective klt variety that is homeomorphic to $ℙ^n$,
  Item~\ref{il:4-4-4} allows defining first and second Chern classes, as well as
  a first Pontrjagin class and a second Stiefel-Whitney class
  \begin{align*}
    c_1(X) & = r_2^{-1} c_1(X_{\reg}) ∈ H²\bigl( X,\, ℤ\bigr) \\
    c_2(X) & = r_4^{-1} c_2(X_{\reg}) ∈ H⁴\bigl( X,\, ℤ\bigr) \\
    p_1(X) & = r_4^{-1} p_1(X_{\reg}) ∈ H⁴\bigl( X,\, ℤ\bigr)\\
    w_2(X) & = r_2^{-1} w_2(X_{\reg}) ∈ H²\bigl( X,\, ℤ_2\bigr).
  \end{align*}
\end{notation}

\begin{rem}[Pontrjagin and Chern classes]\label{rem:4-8}%
  If $X$ be a projective klt variety that is homeomorphic to $ℙ^n$, the
  restriction maps $r_• : H^•\bigl( X,\, ℤ \bigr) → H^•\bigl(X_{\reg},\, ℤ
  \bigr)$ commute with the cup products on $X$ and $X_{\reg}$, which implies in
  particular that
  \[
    p_1(X) = r_4^{-1} p_1(X_{\reg}) %
    = r_4^{-1} \Bigl( c_1(X_{\reg})²-2·c_2(X_{\reg}) \Bigr) %
    = c_1(X)²-2·c_2(X) %
    ∈ H⁴\bigl( X,\, ℤ\bigr).
  \]
\end{rem}

\begin{rem}[Stiefel-Whitney class and first Chern class]\label{rem:4-9}%
  By definition and the well-known relation in the smooth case, we have
  \[
    w_2(X) = c_1(X) \ {\rm mod} 2.
  \]
\end{rem}

Novikov's result on the topological invariance of Pontrjagin classes extends to
the generalized Pontrjagin class defined in Notation~\ref{not:4-7}.

\begin{prop}[Topological invariance of Pontrjagin classes]\label{prop:4-10}%
  Let $X$ be a projective klt variety.  If $φ : X → ℙ^n$ is any homeomorphism,
  then $φ^* p_1(ℙ^n) = p_1(X)$ in $H⁴\bigl( X,\, ℤ\bigr)$.
\end{prop}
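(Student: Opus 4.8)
The plan is to reduce the statement to Novikov's theorem on the topological invariance of the rational (and integral, up to known torsion subtleties) Pontrjagin classes of topological manifolds, applied to the open manifold $X_{\reg}$, and then transport the result back to $X$ via the isomorphisms $r_q$ of Proposition~\ref{prop:4-4}\,\ref{il:4-4-4}. The key point is that $p_1(X)$ was \emph{defined} in Notation~\ref{not:4-7} as $r_4^{-1} p_1(X_{\reg})$, so everything hinges on understanding the homeomorphism $\varphi$ at the level of smooth loci.

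First I would observe that by Proposition~\ref{prop:4-4}\,\ref{il:4-4-3}, the variety $X$ is smooth in codimension two, so $X_{\sing}$ has real codimension $\geq 6$ in $X$. Since $\mathbb{P}^n$ is a manifold, it has no singularities, and the homeomorphism $\varphi : X \to \mathbb{P}^n$ must carry $X_{\topsing}$ homeomorphically onto $(\mathbb{P}^n)_{\topsing} = \emptyset$; but by Lemma~\ref{lem:2-6} together with the fact that klt (indeed quotient) singularities in codimension two are not topologically smooth, $X_{\topsing} = X_{\sing}$. Hence $\varphi$ restricts to a homeomorphism $\varphi_{\reg} : X_{\reg} \to \mathbb{P}^n \smallsetminus \varphi(X_{\sing})$, where the target is $\mathbb{P}^n$ with a closed subset of real codimension $\geq 6$ removed. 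Write $U := \mathbb{P}^n \smallsetminus \varphi(X_{\sing})$, an open (hence topological, and in fact smooth) submanifold of $\mathbb{P}^n$.

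Next, Novikov's theorem \cite{Nov65} applies to the homeomorphism of topological manifolds $\varphi_{\reg} : X_{\reg} \to U$ and yields $\varphi_{\reg}^* p_1(U) = p_1(X_{\reg})$ in $H^4(X_{\reg}, \mathbb{Z})$ — strictly speaking Novikov gives invariance of rational Pontrjagin classes, but here $H^4(X_{\reg},\mathbb{Z}) \cong H^4(X,\mathbb{Z}) \cong \mathbb{Z}$ is torsion-free by Item~\ref{il:4-4-4}, so the rational statement suffices. Since $U$ is an open subset of the smooth manifold $\mathbb{P}^n$, its tangent bundle is the restriction of $T_{\mathbb{P}^n}$, so $p_1(U) = p_1(\mathbb{P}^n)|_U = j^* p_1(\mathbb{P}^n)$ where $j : U \hookrightarrow \mathbb{P}^n$ is the inclusion. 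Because $\varphi(X_{\sing})$ has real codimension $\geq 6$, the restriction $j^* : H^4(\mathbb{P}^n,\mathbb{Z}) \to H^4(U,\mathbb{Z})$ is an isomorphism (by the same Borel–Moore localization argument as in the proof of Item~\ref{il:4-4-4}, or simply because removing a set of codimension $\geq 5$ does not affect $H^{\leq 4}$).

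Finally I would assemble the diagram: the inclusions $X_{\reg} \hookrightarrow X$ and $U \hookrightarrow \mathbb{P}^n$, the restriction $r_4 : H^4(X,\mathbb{Z}) \xrightarrow{\cong} H^4(X_{\reg},\mathbb{Z})$, its analogue $j^* : H^4(\mathbb{P}^n,\mathbb{Z}) \xrightarrow{\cong} H^4(U,\mathbb{Z})$, and the homeomorphisms $\varphi$, $\varphi_{\reg}$ all fit into a commutative square. Chasing $p_1(\mathbb{P}^n)$ around it: pull back to $p_1(U) = p_1(\mathbb{P}^n)|_U$ along $j^*$, then via $\varphi_{\reg}^*$ to $p_1(X_{\reg})$ by Novikov, then via $r_4^{-1}$ to $p_1(X)$ by definition; on the other hand, going the other way around the square gives $r_4^{-1}\varphi_{\reg}^* j^* p_1(\mathbb{P}^n) = \varphi^* p_1(\mathbb{P}^n)$. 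Hence $\varphi^* p_1(\mathbb{P}^n) = p_1(X)$, as claimed. The one step requiring genuine input is the appeal to Novikov's theorem for the open manifold $X_{\reg}$ — all the rest is codimension bookkeeping already packaged in Proposition~\ref{prop:4-4} — and the only mild subtlety to handle carefully is that Novikov gives the rational statement, which is upgraded to the integral one here solely because the relevant $H^4$ groups are infinite cyclic.
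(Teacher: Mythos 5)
Your proof matches the paper's: restrict $\varphi$ to the smooth locus, invoke Novikov's theorem on the resulting homeomorphism of open manifolds, and transport back via the isomorphisms $r_4$ from Item~\ref{il:4-4-4}, upgrading from rational to integral coefficients because $H^4(X_{\reg},\mathbb{Z})\cong\mathbb{Z}$. The one blemish is the intermediate claim ``$X_{\topsing}=X_{\sing}$'', which is neither justified by the cited facts nor needed (and, combined with your observation that $X_{\topsing}=\emptyset$, would absurdly force $X$ to be smooth); simply delete it, since the restriction of $\varphi$ to $X_{\reg}$ is automatically a homeomorphism onto $\mathbb{P}^n\smallsetminus\varphi(X_{\sing})$ and Item~\ref{il:4-4-3} already supplies the codimension bound you use in the Borel--Moore step.
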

\begin{proof}
  Consider the open set $ℙ^n_{\reg} := φ(X_{\reg})$ and the restricted
  homeomorphism $φ_{\reg} : X_{\reg} → ℙ^n_{\reg}$.  Recalling from
  Item~\ref{il:4-4-4} of Propositions~\ref{prop:4-4} that the restriction maps
  \[
    r_4 : H⁴\bigl( X,\, ℤ \bigr) → H⁴\bigl(X_{\reg},\, ℤ \bigr)%
    \quad\text{and}\quad
    r_4 : H⁴\bigl( ℙ^n,\, ℤ \bigr) → H⁴\bigl(ℙ^n_{\reg},\, ℤ \bigr)%
  \]
  are isomorphic, it suffices to show that the restricted classes in rational
  cohomology agree.  More precisely,
  \begin{align*}
    && φ^* p_1(ℙ^n) & = p_1(X) && \text{in } H⁴\bigl( X,\, ℤ \bigr) \\
    ⇔ && r_4 φ^* p_1(ℙ^n) & = r_4 p_1(X) && \text{in } H⁴\bigl( X_{\reg},\, ℤ \bigr) \text{, since $r_4$'s are iso.} \\
    ⇔ && φ^*_{\reg} p_1(ℙ^n_{\reg}) & = p_1(X_{\reg}) && \text{in } H⁴\bigl( X_{\reg},\, ℤ \bigr) \text{, definition, functoriality} \\
    ⇔ && φ^*_{\reg} p_1(ℙ^n_{\reg}) & = p_1(X_{\reg}) && \text{in } H⁴\bigl( X_{\reg},\, ℚ \bigr) \text{, since } H⁴\bigl( X_{\reg},\, ℤ \bigr) = ℤ
  \end{align*}
  The last equation is Novikov's famous topological invariance of Pontrjagin
  classes, \cite{Nov65}\footnote{See \cite[Thm.~0]{MR1610975} for the precise
  result used here and see \cite[Appendix]{MR2721630} for a history of the
  result.  Igor Belegradek explains on
  \href{https://mathoverflow.net/q/442025}{MathOverflow} why compactness
  assumptions are not required.}.
\end{proof}

\begin{cor}[Relation between Chern classes on varieties homeomorphic to $ℙ^n$]\label{cor:4-11}%
  If $X$ is a projective klt variety that is homeomorphic to $ℙ^n$, then
  \[
    2·c_2(X) = \bigl[r²-(n+1)\bigr]·c_1\bigl(𝒪_X(1)\bigr)² \quad \text{in } H⁴\bigl( X,\, ℤ \bigr).
  \]
\end{cor}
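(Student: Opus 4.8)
The plan is to combine the topological invariance of the first Pontrjagin class, Proposition~\ref{prop:4-10}, with the elementary identity $p_1 = c_1^2 - 2c_2$ recorded in Remark~\ref{rem:4-8}, and to evaluate everything against the standard generator $c_1\bigl(𝒪_X(1)\bigr)$ of $H²\bigl(X,\, ℤ\bigr)$.

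First I would compute $p_1(ℙ^n)$. Writing $h := c_1\bigl(𝒪_{ℙ^n}(1)\bigr)$, the total Chern class $c(ℙ^n) = (1+h)^{n+1}$ gives $c_1(ℙ^n) = (n+1)h$ and $c_2(ℙ^n) = \binom{n+1}{2}h²$, hence
\[
  p_1(ℙ^n) = c_1(ℙ^n)² - 2·c_2(ℙ^n) = \bigl[(n+1)² - (n+1)n\bigr]h² = (n+1)·h².
\]
Fixing a homeomorphism $φ : X → ℙ^n$ and invoking Remark~\ref{rem:4-6}, which gives $φ^* h = c_1\bigl(𝒪_X(± 1)\bigr)$ and therefore $φ^* h² = c_1\bigl(𝒪_X(1)\bigr)²$, I obtain $φ^* p_1(ℙ^n) = (n+1)·c_1\bigl(𝒪_X(1)\bigr)²$. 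Proposition~\ref{prop:4-10} then identifies the left-hand side with $p_1(X)$, so
\[
  p_1(X) = (n+1)·c_1\bigl(𝒪_X(1)\bigr)² \quad\text{in } H⁴\bigl(X,\, ℤ\bigr).
\]

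Next I would rewrite $p_1(X)$ using the intrinsic classes of Notation~\ref{not:4-7}. By Remark~\ref{rem:4-8}, $p_1(X) = c_1(X)² - 2·c_2(X)$, so it remains to express $c_1(X)$ through $c_1\bigl(𝒪_X(1)\bigr)$. Since $ω_X ≅ 𝒪_X(r)$ and restriction to the regular locus is compatible with first Chern classes, $c_1\bigl(ω_{X_{\reg}}\bigr) = r·c_1\bigl(𝒪_{X_{\reg}}(1)\bigr)$, whence $c_1(X_{\reg}) = c_1\bigl(𝒯_{X_{\reg}}\bigr) = -r·c_1\bigl(𝒪_{X_{\reg}}(1)\bigr)$; applying $r_2^{-1}$ yields $c_1(X) = -r·c_1\bigl(𝒪_X(1)\bigr)$ and hence $c_1(X)² = r²·c_1\bigl(𝒪_X(1)\bigr)²$. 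Substituting, $p_1(X) = r²·c_1\bigl(𝒪_X(1)\bigr)² - 2·c_2(X)$, and comparing with the displayed formula for $p_1(X)$ gives $2·c_2(X) = \bigl[r² - (n+1)\bigr]·c_1\bigl(𝒪_X(1)\bigr)²$, as claimed.

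I do not expect a genuine obstacle: the argument is a short characteristic-class computation once Proposition~\ref{prop:4-10} is available. The only points requiring care are the bookkeeping of the sign in $c_1(X) = -r·c_1\bigl(𝒪_X(1)\bigr)$ — harmless since only $r²$ enters — and the verification that the isomorphism $ω_X ≅ 𝒪_X(r)$ restricts to $ω_{X_{\reg}} ≅ 𝒪_{X_{\reg}}(r)$ compatibly with the normalization maps $r_q$ used to define the Chern classes of $X$.
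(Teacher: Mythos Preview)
Your proof is correct and follows essentially the same route as the paper's: both compute $p_1(\mathbb{P}^n) = (n+1)\,c_1(\mathcal{O}_{\mathbb{P}^n}(1))^2$, pull back via Proposition~\ref{prop:4-10} and Remark~\ref{rem:4-6}, and then unpack $p_1(X) = c_1(X)^2 - 2c_2(X)$ from Remark~\ref{rem:4-8} together with $c_1(X)^2 = r^2\,c_1(\mathcal{O}_X(1))^2$. Your write-up is slightly more explicit about the total Chern class of $\mathbb{P}^n$ and the sign in $c_1(X) = -r\,c_1(\mathcal{O}_X(1))$, but the argument is the same.
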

\begin{proof}
  Choose a homeomorphism $φ : X → ℙ^n$, in order to compare the Pontrjagin class
  of $ℙ^n$ with that of $X$.
  \begin{align*}
    && p_1(ℙ^n) & = (n+1)·c_1\bigl(𝒪_{ℙ^n}(1)\bigr)² && \text{in } H⁴\bigl( ℙ^n,\, ℤ \bigr) \\
    ⇔ && φ^* p_1(ℙ^n) & = (n+1)·φ^*c_1\bigl(𝒪_{ℙ^n}(1)\bigr)² && \text{in } H⁴\bigl( X,\, ℤ \bigr) \\
    ⇔ && p_1(X) & = (n+1)·c_1\bigl(𝒪_X(± 1)\bigr)² && \text{Prop.~\ref{prop:4-10} and Rem.~\ref{rem:4-6}} \\
    ⇔ && c_1\bigl( 𝒪_X(r)\bigr)² - 2·c_2(X) & = (n+1)·c_1\bigl(𝒪_X(1)\bigr)² && \text{Rem.~\ref{rem:4-8}}
  \end{align*}
  The claim thus follows.
\end{proof}

Corollary~\ref{cor:4-11} allows reformulating the $ℚ$-Miyaoka-Yau inequality and
$ℚ$-Bogomolov-Gieseker inequality as inequalities between the index $r$ and the
dimension $n$.  The first remark will be relevant for varieties of general type,
whereas the second one will be used for Fano varieties.

\begin{rem}[Reformulation of the $ℚ$-Miyaoka-Yau inequality]\label{rem:4-12}%
  Let $X$ be a projective klt variety that is homeomorphic to $ℙ^n$.  Since $X$
  is smooth in codimension two, the Miyaoka-Yau inequality for $ℚ$-Chern
  classes,
  \[
    \big( 2(n+1)·\what{c}_2(X) - n·\what{c}_1(X)² \big)·[H]^{n-2} ≥ 0, \quad\text{for one ample }H,
  \]
  is equivalent to the assertion that there exists a non-negative constant $c
  ∈ ℝ^{≥ 0}$ such that
  \begin{align*}
    && \big( 2(n+1)·c_2(X) - n·c_1(X)² \big) & ≥ c·c_1\bigl( 𝒪_X(1) \bigr)² && \text{in } H⁴\bigl( X,\, ℤ\bigr) \\
    ⇔ && \big( (n+1)(r²-(n+1)) - n·r² \big)·c_1\bigl( 𝒪_X(1) \bigr)² & ≥ c·c_1\bigl( 𝒪_X(1) \bigr)² && \text{Cor.~\ref{cor:4-11}} \\
    ⇔ && \big( r²-(n+1)² \big)·c_1\bigl( 𝒪_X(1) \bigr)² & ≥ c·c_1\bigl( 𝒪_X(1) \bigr)² \\
    ⇔ && |r| & ≥ n+1.
  \end{align*}
  The Miyaoka-Yau inequality is an equality if and only if $|r| = n+1$.
\end{rem}

\begin{rem}[Reformulation of the $ℚ$-Bogomolov-Gieseker inequality]\label{rem:4-13}%
  Let $X$ be a projective klt variety that is homeomorphic to $ℙ^n$.  Since $X$
  is smooth in codimension two, the Bogomolov-Gieseker inequality for $ℚ$-Chern
  classes,
  \[
    \big( 2n·\what{c}_2(X) - (n-1)·\what{c}_1(X)² \big)·[H]^{n-2} ≥ 0, \quad\text{for one (equiv.~every) ample }H,
  \]
  is equivalent to the assertion that $|r| > n$.
\end{rem}

We will also need the topological invariance of the second Stiefel-Whitney class
$w_2$.

\begin{prop}[Topological invariance of the second Stiefel-Whitney class]%
  Let $X$ be a projective klt variety.  If $φ : X → ℙ^n$ is any homeomorphism,
  then $φ^* w_2(ℙ^n) = w_2(X)$ in $H²\bigl( X,\, ℤ/2ℤ\bigr)$.
\end{prop}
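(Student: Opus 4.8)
The plan is to mimic, line for line, the proof of Proposition~\ref{prop:4-10}, with the Pontrjagin class $p_1$ replaced by the Stiefel–Whitney class $w_2$ throughout. First I would fix a homeomorphism $φ : X → ℙ^n$, put $ℙ^n_{\reg} := φ(X_{\reg})$, and consider the restricted homeomorphism $φ_{\reg} : X_{\reg} → ℙ^n_{\reg}$ between the (non-compact, smooth) manifolds of smooth points. By Item~\ref{il:4-4-4} of Proposition~\ref{prop:4-4}, the restriction maps
\[
  r_2 : H²\bigl(X,\, ℤ_2\bigr) → H²\bigl(X_{\reg},\, ℤ_2\bigr)
  \quad\text{and}\quad
  r_2 : H²\bigl(ℙ^n,\, ℤ_2\bigr) → H²\bigl(ℙ^n_{\reg},\, ℤ_2\bigr)
\]
are isomorphisms, so it suffices to check the asserted identity after restriction to $X_{\reg}$. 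Using functoriality of $w_2$ for the commuting square of inclusions and $φ$, the definition of $w_2(X)$ from Notation~\ref{not:4-7}, and the fact that $𝒯_{ℙ^n_{\reg}} = 𝒯_{ℙ^n}|_{ℙ^n_{\reg}}$, this reduces the statement to
\[
  φ_{\reg}^*\, w_2\bigl(ℙ^n_{\reg}\bigr) = w_2\bigl(X_{\reg}\bigr) \quad\text{in } H²\bigl(X_{\reg},\, ℤ_2\bigr).
\]

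To conclude, I would invoke the topological invariance of the Stiefel–Whitney classes. Unlike the Pontrjagin analogue in Proposition~\ref{prop:4-10}, this does not require Novikov's theorem: every topological manifold $N$ possesses a tangent microbundle $τ_N$; a homeomorphism $ψ : N → N'$ induces an isomorphism $τ_N ≅ ψ^*τ_{N'}$ of microbundles; microbundles carry Stiefel–Whitney classes that are natural under such isomorphisms; and for a smooth manifold these agree with the Stiefel–Whitney classes of the tangent vector bundle. Applying naturality to $ψ = φ_{\reg}$ gives the displayed equality. Alternatively, since $X$ and $ℙ^n$ are closed manifolds, one may use Wu's formula $w = \mathrm{Sq}(v)$ to see that the topological $w_2$ is already a homotopy invariant, and then observe --- exactly as $r_2$ was used above --- that this topological class restricts to $w_2(X_{\reg})$.

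The step I expect to cause the most friction is one of care rather than difficulty: one must make sure the invariance statement is cited in a form valid for \emph{non-compact} manifolds, and that the naturality of $w_2$ under restriction to an open submanifold is recorded explicitly. Both are standard --- and were handled in precisely the same way for $p_1$ in Proposition~\ref{prop:4-10} --- so once they are spelled out, no further computation is needed.
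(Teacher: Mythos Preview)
Your proposal is correct and follows essentially the same approach as the paper: reduce to the smooth loci via the restriction isomorphisms of Item~\ref{il:4-4-4}, then invoke topological invariance of Stiefel--Whitney classes. The only cosmetic difference is that the paper cites Thom's original invariance result \cite[Thm.~III.8]{Th52} for this last step, whereas you justify it via tangent microbundles or Wu's formula; all three are standard and interchangeable here.
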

\begin{proof}
  We can argue as in the proof of Proposition~\ref{prop:4-10}, replacing
  Novikov's Theorem by the corresponding invariance result for Stiefel-Whitney
  classes due to Thom, \cite[Thm.~III.8]{Th52}.
\end{proof}

\begin{cor}[Parity of the first Chern class of varieties homeomorphic to $ℙ^n$]\label{cor:4-15}%
  If $X$ is a projective klt variety that is homeomorphic to $ℙ^n$, then $r -
  (n+1)$ is even.
\end{cor}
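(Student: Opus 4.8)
The plan is to read off the parity directly from the mod-$2$ reduction of the first Chern class, using the topological invariance of $w_2$ just established. First I would record the two relevant formulas. From the notation preceding Remark~\ref{rem:4-6} we have $ω_X ≅ 𝒪_X(r)$, hence $c_1(X) = -r·c_1\bigl(𝒪_X(1)\bigr)$; since $-r ≡ r \pmod 2$, Remark~\ref{rem:4-9} gives
\[
  w_2(X) = r·c_1\bigl(𝒪_X(1)\bigr) \bmod 2 \quad\text{in } H²\bigl(X,\, ℤ/2ℤ\bigr).
\]
On the model side, the Euler sequence yields $c_1(ℙ^n) = (n+1)·c_1\bigl(𝒪_{ℙ^n}(1)\bigr)$ and therefore $w_2(ℙ^n) = (n+1)·c_1\bigl(𝒪_{ℙ^n}(1)\bigr) \bmod 2$.

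Next I would fix a homeomorphism $φ : X → ℙ^n$ and invoke the topological invariance of the second Stiefel-Whitney class established above, which gives $φ^* w_2(ℙ^n) = w_2(X)$. By Remark~\ref{rem:4-6} we have $φ^* c_1\bigl(𝒪_{ℙ^n}(1)\bigr) = c_1\bigl(𝒪_X(± 1)\bigr)$, and its reduction modulo $2$ equals $c_1\bigl(𝒪_X(1)\bigr) \bmod 2$ regardless of the sign. Pulling back the expression for $w_2(ℙ^n)$ and comparing it with the expression for $w_2(X)$ then yields the congruence
\[
  (n+1)·c_1\bigl(𝒪_X(1)\bigr) ≡ r·c_1\bigl(𝒪_X(1)\bigr) \pmod 2 \quad\text{in } H²\bigl(X,\, ℤ/2ℤ\bigr).
\]

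To conclude, I would note that $c_1\bigl(𝒪_X(1)\bigr)$ generates $H²(X,\, ℤ) ≅ ℤ$ by Item~\ref{il:4-4-2}, and that, since $X$ is homeomorphic to $ℙ^n$ and $H³(ℙ^n,\, ℤ) = 0$, the universal coefficient theorem gives $H²(X,\, ℤ/2ℤ) ≅ ℤ/2ℤ$ with the coefficient-reduction map $H²(X,\, ℤ) → H²(X,\, ℤ/2ℤ)$ surjective. Hence $c_1\bigl(𝒪_X(1)\bigr) \bmod 2$ is the non-zero element of $H²(X,\, ℤ/2ℤ)$, and the displayed congruence forces $n+1 ≡ r \pmod 2$, i.e., $r - (n+1)$ is even. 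There is no genuine obstacle in this argument; the only two points requiring a moment's care are that the sign ambiguity from Remark~\ref{rem:4-6} becomes invisible modulo $2$, and that one really needs $H²(X,\, ℤ/2ℤ) ≅ ℤ/2ℤ$ — rather than a larger $2$-group — so that the generator of $H²(X,\, ℤ)$ does not reduce to zero.
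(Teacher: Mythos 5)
Your proof is correct and follows the same route the paper indicates: combine the topological invariance of $w_2$ with Remark~\ref{rem:4-9} and the relation $\varphi^*c_1\bigl(𝒪_{ℙ^n}(1)\bigr) = c_1\bigl(𝒪_X(\pm 1)\bigr)$ from Remark~\ref{rem:4-6}. The only thing you add is the explicit verification (via the Bockstein/coefficient sequence and $H^3(ℙ^n,\,ℤ)=0$) that the mod-$2$ reduction of the generator of $H^2(X,\,ℤ)$ is non-zero, which the paper leaves implicit but which is indeed the point one must check.
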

\begin{proof}
  This follows from the topological invariance established just above together
  with Remark~\ref{rem:4-9} and the relation $\varphi^*(c_1(𝒪_{ℙ^n}(1))) =
  c_1(𝒪_X(± 1))$.
\end{proof}

\subsection{Partial answers to Question~\ref*{q:4-3}}
\approvals{Daniel & yes \\ Stefan & yes \\ Thomas & yes}

We conclude the present Section~\ref{sec:4} with three partial answers to
Question~\ref{q:4-3}: for threefolds, we answer Question~\ref{q:4-3} in the
affirmative.  In dimension four and five, we give an affirmative answer for Fano
manifolds.  In higher dimensions, we can at least describe and restrict the
geometry of potential exotic klt varieties homeomorphic to $ℙ^n$.

\begin{prop}[Topological $ℙ^n$ with ample canonical bundle]
  Let $X$ be a projective klt variety that is homeomorphic to $ℙ^n$.  If $K_X$
  is ample, then $r > n+1$.
\end{prop}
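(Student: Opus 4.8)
The plan is to combine the Miyaoka--Yau inequality in the klt setting with the numerical reformulation from Remark~\ref{rem:4-12}, and then to eliminate the borderline value $r = n+1$ by appealing to the topological invariance of the Miyaoka--Yau equality recorded in Corollary~\ref{cor:2-5}.

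First I would recall that a projective klt variety with ample canonical divisor satisfies the Miyaoka--Yau inequality for $\bQ$-Chern classes, \cite{GKPT19b}. Since $X$ is smooth in codimension two by Item~\ref{il:4-4-3}, Remark~\ref{rem:4-12} rewrites this as the numerical statement $|r| \geq n+1$. Because $\omega_X \cong \sO_X(r)$ with $\sO_X(1)$ ample, the hypothesis that $K_X$ is ample says precisely that $r > 0$; hence $r \geq n+1$, and only the case $r = n+1$ remains to be excluded. (Note that $K_X$ ample forces $\dim X \geq 2$: a klt variety homeomorphic to $\bP^1$ is $\bP^1$, whose canonical bundle is anti-ample, so the statement is vacuous when $n = 1$; in particular the dimension hypothesis of Corollary~\ref{cor:2-5} will then be satisfied.)

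Arguing by contradiction, I would then assume $r = n+1$. By Remark~\ref{rem:4-12} the Miyaoka--Yau inequality for $X$ becomes an equality, so $X$ meets all hypotheses of Corollary~\ref{cor:2-5}. As $\bP^n$ is a normal projective variety homeomorphic to $X$, Corollary~\ref{cor:2-5} applied with $M := \bP^n$ forces $K_{\bP^n}$ to be ample. This is absurd, since $K_{\bP^n} \cong \sO_{\bP^n}(-n-1)$ is anti-ample. Hence $r \neq n+1$, and together with the previous step $r \geq n+2 > n+1$, as desired.

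I do not anticipate a real obstacle here: once the klt Miyaoka--Yau inequality and Corollary~\ref{cor:2-5} are available, the argument is essentially bookkeeping around Remark~\ref{rem:4-12}. If one wished to bypass Corollary~\ref{cor:2-5}, the value $r = n+1$ could also be ruled out directly: by the characterization of singular ball quotients in \cite[Thm.~1.5]{GKPT19}, equality in the Miyaoka--Yau inequality would exhibit $X$ as a singular ball quotient, hence as $\bB^n / \pi_1(X_{\reg})$ with $\pi_1(X_{\reg})$ an infinite, cocompact lattice in $\Aut(\bB^n)$; but $X$ is homeomorphic to $\bP^n$, hence a simply connected topological manifold, while $X_{\sing}$ is a complex-analytic subset of complex codimension $\geq 3$ by Item~\ref{il:4-4-3}, so deleting it leaves the fundamental group unchanged and $\pi_1(X_{\reg}) \cong \pi_1(X) = 1$ --- a contradiction.
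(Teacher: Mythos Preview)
Your proof is correct. The main route you take---invoking Corollary~\ref{cor:2-5} with $M = ℙ^n$ to force the absurd conclusion that $K_{ℙ^n}$ is ample---is genuinely different from the paper's argument. The paper does not go through Corollary~\ref{cor:2-5}; instead, once the Miyaoka--Yau equality holds it cites \cite[Thm.~1.2]{GKPT19b} to conclude that $X$ has at worst quotient singularities, then observes that quotient singularities are not topologically smooth, so $X$ must actually be smooth, and finally derives a contradiction from the fact that a smooth ball quotient cannot be simply connected. Your alternative argument at the end, using $π_1(X_{\reg}) ≅ π_1(X) = \{1\}$ via the codimension bound from Item~\ref{il:4-4-3}, is close in spirit to this but bypasses the smoothness step.

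What each buys: the paper's route stays entirely within the uniformisation machinery of \cite{GKPT19b} and is self-contained once that reference is in hand. Your route is slicker on the page but imports the full strength of Theorem~\ref{thm:2-3} (Siu and Mostow rigidity) through Corollary~\ref{cor:2-5}; since that corollary is already established in the paper, this is a legitimate and efficient reuse. Your handling of the $n=1$ case is also fine, though one could equally note that Remark~\ref{rem:4-12} and the characteristic-class setup of Section~\ref{sec:4-2} are only meaningful for $n ≥ 2$ anyway.
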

\begin{proof}
  Recall from \cite[Thm.~1.1]{GKPT19b} that $X$ satisfies the $ℚ$-Miyaoka-Yau
  inequality.  We have seen in Remark~\vref{rem:4-12} that this implies $r = |r|
  ≥ n+1$, with $r = n+1$ if and only if equality holds in $ℚ$-Miyaoka-Yau
  inequality.  In the latter case, recall from \cite[Thm.~1.2]{GKPT19b} that $X$
  has no worse than quotient singularities.  Since quotient singularities are
  not topologically smooth, it turns out that $X$ cannot have any singularities
  at all.  By Yau's theorem (or again by \cite[Thm.~1.2]{GKPT19b}), $X$ must
  then be a smooth ball quotient, contradicting $π_1(X) = π_1(ℙ_n) = \{1\}$.
\end{proof}

\begin{prop}[Topological $ℙ^n$ with ample anti-canonical bundle]\label{prop:4-17}%
  Let $X$ be a projective klt variety that is homeomorphic to $ℙ^n$.  If $-K_X$
  is ample, then either $X ≅ ℙ^n$ or $𝒯_X$ is unstable.
\end{prop}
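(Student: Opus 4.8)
The plan is to establish the contrapositive: assuming that $𝒯_X$ is semistable, I would deduce $X ≅ ℙ^n$. First recall from Proposition~\ref{prop:4-4} that $\Pic(X) ≅ ℤ$ is generated by $c_1\bigl(𝒪_X(1)\bigr)$, that $X$ is smooth in codimension two, and that $X$ is Gorenstein --- hence, being klt and Gorenstein, has canonical singularities. Since $-K_X$ is ample we have $ω_X ≅ 𝒪_X(r)$ with $r < 0$; because $\Pic(X)$ has rank one the notion of semistability is unambiguous, and ``$𝒯_X$ is unstable'' is precisely the negation of ``$𝒯_X$ is semistable with respect to $𝒪_X(1)$''.

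The first step is to apply the $ℚ$-Bogomolov--Gieseker inequality for semistable reflexive sheaves on projective klt varieties, \cite{GKPT19b}, to the reflexive rank-$n$ tangent sheaf $𝒯_X$. This yields
\[
  \bigl( 2n·\what{c}_2(𝒯_X) - (n-1)·\what{c}_1(𝒯_X)² \bigr)·\bigl[𝒪_X(1)\bigr]^{n-2} ≥ 0,
\]
which by the elementary reformulation of Remark~\ref{rem:4-13} is equivalent to $|r| > n$; as $r$ is an integer, $|r| ≥ n+1$.

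The second step bounds $|r|$ from above. Since $X$ is a klt --- indeed Gorenstein, canonical --- Fano variety of dimension $n$, the Kobayashi--Ochiai characterisation of projective space, in the form valid for Gorenstein Fano varieties with canonical singularities, asserts that the Fano index of $X$ is at most $n+1$, with equality if and only if $X ≅ ℙ^n$. (Concretely, $-K_X = 𝒪_X(|r|)$, and Kawamata's bound on the length of an extremal ray supplies a rational curve $C$ with $0 < -K_X·C ≤ n+1$, so $|r| ≤ |r|·\bigl(𝒪_X(1)·C\bigr) = -K_X·C ≤ n+1$ because $𝒪_X(1)·C$ is a positive integer.) Combined with Step~1 this forces $|r| = n+1$ and hence $X ≅ ℙ^n$, proving the contrapositive.

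The point needing the most care is checking that the imported results are genuinely available in the singular setting: the $ℚ$-Bogomolov--Gieseker inequality must be used with the $ℚ$-Chern classes $\what{c}_•$ of \cite[Sect.~3.7]{GKPT19b} that also appear in Remark~\ref{rem:4-13}, and the equality case of Kobayashi--Ochiai must hold for Gorenstein Fano $n$-folds with canonical singularities rather than only for manifolds. If one prefers to avoid the latter, an alternative finish once $|r| = n+1$ is known is to observe that equality then holds in the $ℚ$-Miyaoka--Yau inequality by Remark~\ref{rem:4-12}, to apply the klt characterisation of quotients of $ℙ^n$ from \cite{GKP22} to see that $X$ has quotient singularities and is a finite quotient of $ℙ^n$, and to conclude from the simple connectedness of $X$ (which is homeomorphic to $ℙ^n$) that $X ≅ ℙ^n$ --- provided one first verifies that semistability of $𝒯_X$ yields the semistability hypothesis on the canonical extension $ℰ_X$ needed to apply \cite{GKP22}.
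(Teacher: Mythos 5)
Your proposal follows the same route as the paper: reduce to the $\mathbb{Q}$-Bogomolov--Gieseker inequality (via Remark~\ref{rem:4-13}) to get $|r| > n$, then invoke Fujita's singular Kobayashi--Ochiai theorem \cite[Thm.~1]{MR946238} to conclude $X \cong \mathbb{P}^n$. The paper applies Fujita's theorem directly from $|r| > n$ without first pinning down $|r| = n+1$, so your intermediate bound via Kawamata's extremal-ray estimate is unnecessary; and your alternative finish via the Miyaoka--Yau equality indeed has the gap you yourself flag, since semistability of $\mathcal{T}_X$ does not obviously yield semistability of the canonical extension $\mathcal{E}_X$ required by \cite{GKP22}.
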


\begin{rem}
  Recall from \cite[Cor.~32]{KST07} that Fano varieties with unstable tangent
  bundles admit natural sequences of rationally connected foliations.  These
  might be used to study their geometry further.  If in the situation of
  Proposition~\ref{prop:4-17} we additionally assume that the index is one,
  i.e., that $r = -1$, then $Ω^{[1]}_X$ is always semistable: if $𝒮 ⊊
  Ω^{[1]}_X$ was destabilizing, then $\det 𝒮 ⊆ Ω_X^{[\rank 𝒮]}$ is either
  trivial (hence violating the non-existence of reflexive forms,
  \cite[Thm.~1]{Zh06} and \cite[Thm.~5.1]{GKKP11}) or ample (hence violating the
  Bogomolov-Sommese vanishing theorem for klt varieties,
  \cite[Thm.~7.2]{GKKP11}).
\end{rem}

\begin{proof}[Proof of Proposition~\ref{prop:4-17}]
  If $𝒯_X$ is semistable, then the $ℚ$-Bogomolov-Gieseker inequality holds, and
  we have seen in Remark~\ref{rem:4-13} that $-r = |r| > n$.  Fujita's singular
  version of the Kobayashi-Ochiai theorem, \cite[Thm.~1]{MR946238}, will then
  apply to show that $X ≅ ℙ^n$.
\end{proof}

While the Bogomolov-Gieseker inequality does not necessarily hold on a Fano
variety with unstable tangent sheaf, we still get some restriction on the index
from the following result.

\begin{prop}\label{prop:4-19}%
  Let $X$ be a projective klt variety that is homeomorphic to $ℙ^n$.  If $-K_X$
  is ample, then $r² ≥ n+1$.  In particular, if $n ≥ 4$, then $r ≥ 3$.
\end{prop}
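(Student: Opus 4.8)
The plan is to rule out the small values of $|r|$ by exploiting the positivity of $-K_X = 𝒪_X(-r) = 𝒪_X(|r|)$ together with the Chern-class identity from Corollary~\ref{cor:4-11}. By Corollary~\ref{cor:4-11} we have $2·c_2(X) = [r^2-(n+1)]·c_1(𝒪_X(1))^2$ in $H^4(X,ℤ) ≅ ℤ·c_1(𝒪_X(1))^2$, so the claim $r^2 ≥ n+1$ is exactly the assertion that $c_2(X) ≥ 0$ as a multiple of $c_1(𝒪_X(1))^2$. The natural route is therefore to prove $c_2(X)·c_1(𝒪_X(1))^{n-2} ≥ 0$. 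Since $X$ is klt and $-K_X$ is ample, $X$ is a $ℚ$-Fano variety, and one can invoke a Miyaoka-type pseudo-effectivity statement for the second Chern class: the generic semipositivity / Miyaoka semistability theorem in the singular setting (as in the circle of ideas around \cite[Thm.~1.1]{GKPT19b} and the foliation results of \cite{KST07}) gives that $\what{c}_2(𝒯_X)$ intersected with $n-2$ movable (e.g.\ ample) classes is nonnegative on any klt variety that is not uniruled by "too many" foliations — but here, more simply, the relevant input is that on a $ℚ$-Fano klt variety the class $\what{c}_2(𝒯_X)·H^{n-2} ≥ 0$ for $H$ ample, which follows from Miyaoka's theorem applied to a minimal model of a general complete-intersection surface section together with the bend-and-break/rational-connectedness of Fanos. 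Combined with Item~\ref{il:4-4-3} (smoothness in codimension two, so that $c_2(X) = \what{c}_2(𝒯_X)$) and Corollary~\ref{cor:4-11}, this yields $r^2-(n+1) ≥ 0$, i.e.\ $r^2 ≥ n+1$.

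Concretely, the key steps in order are: (1) record that $X$ is $ℚ$-Fano and smooth in codimension two, so ordinary and $ℚ$-Chern classes of the tangent sheaf agree in $H^4$; (2) cut down by $n-2$ general members of $|𝒪_X(m)|$ for $m ≫ 0$ to reduce to a surface $S$ — by Bertini in the klt setting $S$ is again klt, smooth in codimension $\geq 1$ hence smooth, and $-K_S$ is big; alternatively, keep the intersection-theoretic formulation and apply the Miyaoka semipositivity theorem directly on $X$; (3) apply Miyaoka's generic semipositivity theorem: either $Ω^{[1]}_X$ is generically nef, giving $c_2(X)·H^{n-2} ≥ 0$ directly, or it is not, in which case $X$ carries a rationally connected foliation by \cite[Cor.~32]{KST07} — but one still extracts $c_2·H^{n-2} ≥ 0$ because the destabilizing subsheaf has non-positive (indeed, in the index-one case, non-existent) determinant, forcing the discriminant inequality in the favorable direction; (4) translate $c_2(X)·c_1(𝒪_X(1))^{n-2} ≥ 0$ through Corollary~\ref{cor:4-11} into $r^2 ≥ n+1$; (5) for the last sentence, combine $r^2 ≥ n+1 ≥ 5$ with Corollary~\ref{cor:4-15} (so $r$ has the same parity as $n+1$, hence $r$ is odd when $n=4$) to conclude $|r| ≥ 3$ when $n ≥ 4$ — for $n = 4$ the bound $r^2 ≥ 5$ already forces $|r| ≥ 3$, and for $n ≥ 5$ it is immediate from $r^2 ≥ 6$.

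The main obstacle I expect is Step~(3): making the Miyaoka semipositivity argument work cleanly in the klt category while only assuming smoothness in codimension two. On a smooth Fano this is classical (Miyaoka), but here one must either cite a singular version of generic semipositivity for reflexive differentials on klt spaces (which does exist in the literature the authors have been developing, e.g.\ in \cite{GKPT19b} and related papers) or run the surface-section reduction and control what happens to $c_2$ under passing to the surface — the subtlety being that $c_2(𝒯_X)·H^{n-2}$ is not literally $c_2(𝒯_S)$ but differs by a term involving $c_1(𝒯_X)·c_1(N_{S/X})$ which one must show is of the right sign using ampleness of $𝒪_X(1)$. A secondary, more minor point is handling the unstable case: one cannot simply invoke Bogomolov-Gieseker (which may fail, as the paper emphasizes), so the argument must genuinely only use the weaker conclusion $c_2 ≥ 0$ rather than the full discriminant inequality, and one should check the destabilizing-subsheaf analysis sketched in the Remark after Proposition~\ref{prop:4-17} extends from index one to control the sign of $c_2$ in general.
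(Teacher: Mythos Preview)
Your overall strategy matches the paper exactly: establish $c_2(X)\cdot c_1\bigl(\mathcal{O}_X(1)\bigr)^{n-2} \geq 0$ and then read off $r^2 \geq n+1$ from Corollary~\ref{cor:4-11}. The difference lies entirely in how this positivity is obtained.

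The paper settles it in one line by citing Ou's theorem \cite[Cor.~1.5]{Ou} on generic nefness of tangent sheaves, which applies because $X$ is factorial (Item~\ref{il:4-4-5}) and smooth in codimension two (Item~\ref{il:4-4-3}). Your sketch instead tries to rebuild this positivity from Miyaoka-type arguments, and here there are real gaps. Miyaoka's generic semipositivity concerns $\Omega^{[1]}_X$ on \emph{non-uniruled} varieties; for a Fano, $\det \Omega^{[1]}_X = K_X$ is anti-ample, so $\Omega^{[1]}_X$ is never generically nef and the dichotomy in your Step~(3) collapses immediately to the branch you do not actually resolve. The surface-section reduction also contains an error: with $m \gg 0$, adjunction gives $K_S = \bigl(K_X + (n-2)mH\bigr)\big|_S$ ample, not $-K_S$ big, so $S$ is of general type and controlling the correction terms between $c_2(\mathcal{T}_X)|_S$ and $c_2(\mathcal{T}_S)$ is the whole difficulty rather than a side issue. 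The statement you actually need is the dual one---generic nefness of the \emph{tangent} sheaf on klt Fanos---and that is precisely Ou's result; there is no shortcut through classical Miyaoka.

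For the final clause, invoking Corollary~\ref{cor:4-15} is unnecessary: $r^2 \geq n+1 \geq 5$ with $r$ an integer already forces $|r| \geq 3$.
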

\begin{proof}
  Since $X$ is factorial by \ref{il:4-4-5} and non-singular in codimension two
  by \ref{il:4-4-3}, we may apply \cite[Cor.~1.5]{Ou} to obtain the bound
  $c_2(X)·c_1(𝒪_X(1))^{n-2} ≥ 0$.  Then, we conclude by
  Corollary~\ref{cor:4-11}.
\end{proof}

In dimension three we can now fully answer Question~\ref{q:4-3}.

\begin{thm}[Topological $ℙ³$]\label{thm:4-20}%
  Let $X$ be a projective klt variety that is homeomorphic to $ℙ³$.  Then, $X ≅
  ℙ³$.
\end{thm}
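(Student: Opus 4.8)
The plan is to turn the homeomorphism into a sharp numerical constraint on the index $r ∈ ℤ$ determined by $ω_X ≅ 𝒪_X(r)$, forcing $r = -4$, and then to recognise $X$ as $ℙ^3$ by the singular Kobayashi–Ochiai theorem. First I would record what Proposition~\ref{prop:4-4} yields for $n = 3$: the variety $X$ is Gorenstein and factorial, has rational singularities, is smooth outside a finite set, satisfies $H^q(X, 𝒪_X) = 0$ for all $q ≥ 1$ — so $χ(𝒪_X) = 1$ — and $\Pic(X) = ℤ·𝒪_X(1)$. Since $H^•(X, ℤ)$ is the cup-product ring of $ℙ^3$ (Remark~\ref{rem:4-6}), the ample generator satisfies $c_1(𝒪_X(1))^3 = 1$; put $H := c_1(𝒪_X(1))$, so that $c_1(X) = -rH$ while Corollary~\ref{cor:4-11} gives $2c_2(X) = (r^2-4)H^2$. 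Consequently
\[
  c_1(X)·c_2(X) = -\frac{r(r^2-4)}{2}·H^3 = -\frac{r(r^2-4)}{2} \quad\text{in } H⁶(X, ℤ) = ℤ.
\]

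Next I would pass to a crepant terminalisation $π : Y → X$, which exists because $X$ is a canonical threefold (minimal model program for threefolds, see \cite{KM98}); thus $Y$ is a $ℚ$-factorial terminal \emph{Gorenstein} threefold with isolated singularities and $K_Y = π^*K_X$. Crepancy together with the rationality of the singularities gives $χ(𝒪_Y) = χ(𝒪_X) = 1$ and $c_1(Y) = π^*c_1(X)$; moreover $Y$ and $X$ are smooth in codimension two, so — running the Borel–Moore localisation argument exactly as in the proof of Item~\ref{il:4-4-4} of Proposition~\ref{prop:4-4} — one obtains $π_*c_2(Y) = c_2(X)$, whence the projection formula gives $c_1(Y)·c_2(Y) = c_1(X)·c_2(X)$. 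Now I would invoke Reid's Riemann–Roch formula for threefolds with terminal singularities: the basket of a Gorenstein terminal threefold is empty, so $χ(𝒪_Y) = \tfrac{1}{24}c_1(Y)·c_2(Y)$. Combining everything,
\[
  1 = χ(𝒪_Y) = \frac{1}{24}c_1(X)·c_2(X) = -\frac{r(r^2-4)}{48},
\]
that is, $r^3 - 4r + 48 = 0$. Since $r^3 - 4r + 48 = (r+4)(r^2-4r+12)$ and the quadratic factor has negative discriminant, the unique real — hence unique integer — solution is $r = -4$.

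It then only remains to note that $-K_X ≅ 𝒪_X(4)$, with $4 = \dim X + 1$ and $𝒪_X(1)$ ample and Cartier (by factoriality), so Fujita's singular version of the Kobayashi–Ochiai theorem \cite[Thm.~1]{MR946238} — precisely the input already used in Proposition~\ref{prop:4-17} — gives $X ≅ ℙ^3$. The delicate points are the two ingredients of the numerical identity: the existence of a terminalisation with the stated properties and, above all, the vanishing of the basket correction in Reid's Riemann–Roch in the Gorenstein terminal case; the identification $c_1(Y)·c_2(Y) = c_1(X)·c_2(X)$ is by contrast a routine Borel–Moore computation. As a consistency check, in the subcase where $K_X$ is ample this machinery already yields a contradiction without solving for $r$: then $K_Y = π^*K_X$ is nef and big, so $Y$ is a minimal Gorenstein threefold of general type, and Miyaoka's pseudo-effectivity of $c_2$ — equivalently the sign in Reid's formula — forces $χ(𝒪_Y) ≤ 0$, contradicting $χ(𝒪_Y) = 1$.
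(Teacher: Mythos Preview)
Your argument is correct and follows the paper's strategy exactly: combine Riemann--Roch with Corollary~\ref{cor:4-11} to obtain the cubic $r(r^2-4) = -48$, whose unique real root is $r = -4$, and then apply Fujita's singular Kobayashi--Ochiai theorem. The only difference is that the paper invokes the identity $χ(𝒪_X) = \tfrac{1}{24}\,[-K_X]\!\cdot\! c_2(X)$ directly on $X$ (as a known fact for threefolds with isolated rational Gorenstein singularities), whereas you re-derive it by passing to a crepant terminalisation and applying Reid's formula with empty basket---a legitimate detour, though your comparison $π_*c_2(Y) = c_2(X)$ is more cleanly seen from $A_1(X_{\sing}) = 0$ (the singular locus being finite) than from the $H^4$-isomorphism of Item~\ref{il:4-4-4} that you cite.
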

\begin{proof}
  Since $X$ is a threefold with isolated, rational Gorenstein singularities,
  Riemann-Roch takes a particularly simple form:
  \begin{align*}
    1 \overset{\ref{il:4-4-1}}{=} χ(𝒪_X) = \frac{1}{24}·[-K_X]·c_2(X).
  \end{align*}
  With Corollary~\ref{cor:4-11}, this reads
  \[
    -48 = r·(r²-4).
  \]
  This equation has only one real solution: $r = -4$; in particular, $-K_X$ is
  ample.  As before, Fujita's theorem \cite[Thm.~1]{MR946238} applies to show
  that $X ≅ ℙ^n$.
\end{proof}

Finally, in dimensions four and five we show the following.

\begin{thm}[$\mathbb{Q}$-Fano $4$- and $5$-folds homeomorphic to projective spaces]\label{thm:4-21}%
  Let $X$ be a projective klt variety homeomorphic to $ℙ^n$, with $n=4$ or $5$.
  Assume that $K_X $ is not ample.  Then, $X ≅ ℙ^n$.
\end{thm}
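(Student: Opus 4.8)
The plan is to combine the structural results of Propositions~\ref{prop:4-4}, \ref{prop:4-10} and \ref{prop:4-19} with Corollaries~\ref{cor:4-11} and~\ref{cor:4-15} and the classification of del Pezzo varieties, and to reach a contradiction by computing the intersection number $c_2(X)\cdot H^{n-2}$ in two different ways, where $H := c_1(\mathcal{O}_X(1))$. \emph{Reduction.} By Proposition~\ref{prop:4-4}, $X$ is factorial and Gorenstein, smooth in codimension two, with $\Pic(X) = \mathbb{Z}\cdot\mathcal{O}_X(1)$ and $\omega_X \cong \mathcal{O}_X(r)$ for a unique integer $r \neq 0$; moreover $H^n = 1$ by Remark~\ref{rem:4-6}. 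Since $K_X$ is not ample, $-K_X$ is ample, so $X$ is a Gorenstein Fano variety and, being klt and Gorenstein, it is canonical. Assume for a contradiction that $X \not\cong \mathbb{P}^n$. By Fujita's singular Kobayashi--Ochiai theorem \cite[Thm.~1]{MR946238} the index of $X$ satisfies $|r| \leq n+1$, and $|r| = n+1$ would force $X \cong \mathbb{P}^n$; hence $|r| \leq n$. Corollary~\ref{cor:4-15} gives $r \equiv n+1 \pmod 2$ and Proposition~\ref{prop:4-19} gives $|r| \geq 3$ since $n \geq 4$. For $n = 4$ this leaves only $|r| = 3$, and for $n = 5$ only $|r| = 4$; in both cases $|r| = n-1$, so $-K_X = (n-1)H$ and $X$ is a Gorenstein del Pezzo $n$-fold of degree $H^n = 1$. (This is exactly the step that restricts the argument to $n \in \{4,5\}$.)

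\emph{Two computations of $c_2(X)\cdot H^{n-2}$.} On the one hand, Corollary~\ref{cor:4-11}, which rests on Novikov's topological invariance of the Pontrjagin class (Proposition~\ref{prop:4-10}), gives $2c_2(X) = (r^2 - (n+1))H^2 = (n^2-3n)H^2$, whence $c_2(X)\cdot H^{n-2} = \tfrac12 n(n-3)$. On the other hand, by Fujita's theory of del Pezzo varieties a general complete intersection $S := D_1 \cap \dots \cap D_{n-2}$ of $n-2$ general members $D_i \in |H|$ is a del Pezzo surface of degree $H^n = 1$; its singularities are canonical, hence rational double points, and one checks that $S$ is in fact smooth and contained in $X_{\reg}$. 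Thus $S$ is $\mathbb{P}^2$ blown up in eight points, so its topological Euler number is $e(S) = 11$. Adjunction gives $-K_S = H|_S =: H_S$ with $H_S^2 = 1$, and $N_{S/X} \cong \mathcal{O}_S(H_S)^{\oplus(n-2)}$; feeding this into $0 \to \mathcal{T}_S \to \mathcal{T}_X|_S \to N_{S/X} \to 0$, using $c_1(\mathcal{T}_S) = -K_S = H_S$ and $[S] = H^{n-2}\cap[X]$, one obtains
\[
  c_2(X)\cdot H^{n-2} \;=\; \int_S c_2\bigl(\mathcal{T}_X|_S\bigr) \;=\; e(S) + (n-2)H_S^2 + \binom{n-2}{2}H_S^2 \;=\; 11 + \binom{n-1}{2}.
\]
Comparing the two values forces $\tfrac12 n(n-3) = 11 + \binom{n-1}{2}$, that is $0 = 24$, which is absurd. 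Hence $X \cong \mathbb{P}^n$.

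\emph{Main obstacle.} The delicate point is the surface-section step: one must ensure that a general iterated hyperplane section of the possibly singular del Pezzo variety $X$ descends to a genuinely \emph{smooth} del Pezzo surface of degree one lying entirely inside $X_{\reg}$, even though $X_{\sing}$ may be positive-dimensional and $|H|$ has a base point. This needs Fujita's ladder construction for del Pezzo varieties together with a Bertini analysis at the unique base point of $|H|$, which for a degree-one del Pezzo variety is a smooth point. As an alternative to the last two steps one can invoke Fujita's explicit classification, which identifies $X$ with a sextic hypersurface in $\mathbb{P}(1,\dots,1,2,3)$, and then derive the contradiction from a Griffiths--Steenbrink computation showing $b_n(X) > b_n(\mathbb{P}^n)$.
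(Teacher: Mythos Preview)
Your reduction to the case $|r|=n-1$ is correct and is in fact slightly more economical than the paper's: by invoking the parity constraint (Corollary~\ref{cor:4-15}) together with $|r|\geq 3$ right away, you rule out $|r|=n$ and, for $n=5$, the value $|r|=3$ in one stroke, whereas the paper handles these separately (using Fujita's quadric theorem for $i=n$, and parity only at the end for $n=5$, $i=3$).

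For the remaining del Pezzo case $|r|=n-1$, you take a genuinely different route.  The paper identifies $X$ with a weighted sextic in $\mathbb{P}(3,2,1^{n})$ via \cite{Fuj90}, observes that \emph{smooth} such sextics have semistable tangent bundle \cite[Prop.~6.15]{GKP22}, and then uses the conservation of numbers \cite[Thm.~10.2]{Fulton98} to transport the Bogomolov--Gieseker inequality to the singular $X$, contradicting Remark~\ref{rem:4-13}.  Your approach instead cuts down to a surface section and computes $c_2(X)\cdot H^{n-2}$ explicitly as $11+\binom{n-1}{2}$; this is the exact Chern number of a smooth sextic, so the two arguments produce the same contradiction, yours being numerically sharper while the paper's is more conceptual.

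The gap you flag is real, though, and not merely cosmetic.  Fujita's ladder only yields a \emph{Gorenstein} del Pezzo surface $S$, which may carry ADE singularities; in that case $\mathcal{T}_S$ is not locally free and the identification $c_2(\mathcal{T}_S)=e(S)=11$ breaks down, so your formula for $c_2(X)\cdot H^{n-2}$ is not justified as written.  Your observation that the base point of $|H|$ is smooth is correct (from the normal form $z^{2}+y^{3}+g(x,y)$ one has $\partial_z f\neq 0$ there), and this does give $S\subset X_{\reg}$ for general sections, but smoothness of $S$ \emph{at} the base point still requires a separate local argument that you do not carry out.  The paper's conservation-of-numbers device bypasses this entirely: since $X$ is smooth in codimension two, its second Chern number coincides with that of a nearby smooth member of the family of sextics, and no surface section is needed.
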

\begin{proof}
  Recall that $X$ is a Gorenstein Fano variety of index $i = -r$, with canonical
  singularities, smooth in codimension two.  By \cite[Thm.~1 and 2]{MR946238},
  we may assume that $i ≤ \dim X - 1$.  Further, from
  Proposition~\ref{prop:4-19}, we see that $i ≥ 3$.  These cases have to be
  excluded.
 
  If $i = \dim X-1$, then by \cite{Fuj90}, $X$ is a hypersurface of weighted
  degree $6$ embedded in the smooth part of the weighted projective space
  $ℙ(3,2,1^{n})$.  Smooth such hypersurfaces have semistable tangent bundle by
  \cite[Prop.~6.15]{GKP22}; in particular, they satisfy the Bogomolov-Gieseker
  inequality.  Since $X$ is smooth in codimension two, the ``principle of
  conservation of numbers'', \cite[Thm.~10.2]{Fulton98}, implies that $X$
  satisfies the Bogomolov-Gieseker inequality as well, which in turn contradicts
  Remark~\ref{rem:4-13}.
 
  The remaining case, $n = 5$ and $i = 3$, is ruled out by
  Corollary~\ref{cor:4-15}, which implies that $i = -r$ has to be even. 
\end{proof}


\newcommand{\etalchar}[1]{$^{#1}$}

\end{document}